\def\version{April 14, 2014}
\title
{Structural stability of a dynamical system \\ near a non-hyperbolic fixed point}
\author{
  Roland Bauerschmidt\thanks{Department of Mathematics,
    University of British Columbia,
    Vancouver, BC, Canada V6T 1Z2.
    Present address: School of Mathematics,
    Institute for Advanced Study,
    Princeton, NJ 08540 USA.
    E-mail: {\tt brt@math.ias.edu}.},\;
  David  Brydges\thanks{Department of Mathematics,
    University of British Columbia,
    Vancouver, BC, Canada V6T 1Z2.
    E-mail: {\tt db5d@math.ubc.ca}, {\tt slade@math.ubc.ca}.}\;
  and Gordon Slade$^\dagger$}
\date\version
\numberwithin{equation}{section}
\theoremstyle{plain}
\newtheorem{theorem}{Theorem}[section]
\newtheorem{lemma}      [theorem] {Lemma}
\newtheorem{prop}       [theorem] {Proposition}
\newtheorem{cor}        [theorem] {Corollary}
\theoremstyle{definition}
\newtheorem{defn}       [theorem] {Definition}
\newtheorem{example}    [theorem] {Example}
\newtheorem{rk}         [theorem] {Remark}
\newtheorem{ass}        {Assumption}
\newcommand{\lbeq}[1]  {\label{e:#1}}
\newcommand{\refeq}[1] {\eqref{e:#1}}
\newcommand{\nnb}   {\nonumber \\}
\newcommand{\Vcal}   {\mathcal{V}}
\newcommand{\Wcal}   {\mathcal{W}}
\newcommand{\R}{\mathbb{R}}
\newcommand{\N}{\mathbb{N}}
\newcommand{\1}{1}
\newcommand{\half}{\textstyle{\frac 12}}
\newcommand{\ddp}[2]{\frac{\partial #1}{\partial #2}}
\newcommand{\proj}{\pi}
\newcommand{\gbar}{\bar{g}}
\newcommand{\zbar}{\bar{z}}
\newcommand{\mubar}{\bar{\mu}}
\newcommand{\betamax}{\|\beta\|_\infty}
\renewcommand{\b}{b}
\renewcommand{\u}{}
\newcommand{\domK}{a}
\newcommand{\domV}{h}
\newcommand{\hK}{(\domK-\domK_*)}
\newcommand{\hV}{\domV}
\newcommand{\jm}{j_\Omega}
\newcommand{\Mext}{M_\mathrm{ext}}
\renewcommand{\v}{{\sf r}}
\newcommand{\w}{{\sf w}}
\newcommand{\B}{{\sf B}}
\newcommand{\I}{{\sf N}}
\newcommand{\g}{{\mathring{g}}}
\newcommand{\x}{\mathring{x}}
\newcommand{\Kr}{\mathring{K}}
\newcommand{\alr}{\mathring{\alpha}}
\newcommand{\sigmar}{\mathring{\sigma}}
\newcommand{\taur}{\mathring{\tau}}
\newcommand{\etar}{\mathring{\eta}} % k
\newcommand{\gammar}{\mathring{\gamma}} % l
\newcommand{\lambdar}{\mathring{\lambda}}
\newcommand{\xir}{\mathring{\xi}}
\newcommand{\etat}{\tilde{\eta}} % k
\newcommand{\gammat}{\tilde{\gamma}} % l
\newcommand{\lambdat}{\tilde{\lambda}}
\newcommand{\xit}{\tilde{\xi}}
\newcommand{\Sb}{S^0}
\begin{document}

\maketitle

\begin{abstract}
  We prove structural stability under perturbations for a class of
  discrete-time dynamical systems near a non-hyperbolic fixed point.
  We reformulate the stability problem in terms of the well-posedness
  of an in\-fi\-nite-di\-men\-sio\-nal nonlinear ordinary differential
  equation in a Banach space of carefully weighted sequences.
  Using this, we prove existence and regularity of flows of the
  dynamical system which obey mixed initial and final boundary
  conditions.  The class of dynamical systems we study, and the
  boundary conditions we impose, arise in a renormalisation group
  analysis of the 4-dimensional weakly self-avoiding walk
  and the 4-dimensional $n$-component $|\varphi|^4$ spin model.
\end{abstract}

\section{Introduction and main result}
\label{sec:intro}

\subsection{Introduction}
\label{sec:approx-dynsys}

Let $\Vcal = \R^3$ with elements $V \in \Vcal$ written $V=(g,z,\mu)$
and considered as a column vector for matrix multiplication.
For each $j\in\N_0=\{0,1,2,\ldots\}$, we define the \emph{quadratic flow}
$\bar\varphi_j : \Vcal \to \Vcal$ by
\begin{equation}
\lbeq{Mmatrix}
  \bar\varphi_j(V) =
  \begin{pmatrix}
    1 & 0 &0 \\
    0 & 1 & 0 \\
    \eta_j & \gamma_j & \lambda_j
  \end{pmatrix} V
  -
  \begin{pmatrix}
    V^T q^g_j V\\
    V^T q^z_j V\\
    V^T q^\mu_j V
  \end{pmatrix}
  ,
\end{equation}
with the quadratic terms of the form
\begin{equation}
  q_j^g  = \begin{pmatrix}
      \beta_j & 0 & 0\\
      0 & 0 & 0 \\
      0 & 0 & 0
      \end{pmatrix},
  \quad
  q_j^z = \begin{pmatrix}
      \theta_j & \half \zeta_j & 0\\
      \half \zeta_j & 0 & 0 \\
      0 & 0 & 0
    \end{pmatrix},
\lbeq{qdef}
\end{equation}
and
\begin{equation}
  q_j^\mu = \begin{pmatrix}
      \upsilon_j^{gg} & \half \upsilon_j^{gz} & \half \upsilon_j^{g\mu}\\
      \half \upsilon_j^{gz} & \upsilon_j^{zz} & \half \upsilon_j^{z\mu} \\
      \half \upsilon_j^{g\mu} & \half \upsilon_j^{z\mu} & 0
    \end{pmatrix}.
\lbeq{Qdef}
\end{equation}
All entries in the above matrices are real numbers.
Our hypotheses on the parameters of $\bar\varphi$
are stated precisely in Assumptions (A1--A2) below.
In particular, we assume that
there exists $\lambda > 1$ such that $\lambda_j \ge \lambda$ for all
$j$.

The quadratic flow $\bar\varphi$ defines a time-dependent discrete-time 3-dim\-en\-sion\-al
dynamical system.
It is triangular, in the sense that
the equation for $g$ does not depend on
$z$ or $\mu$, the equation for $z$ depends only on $g$,
and the equation for $\mu$ depends on
$g$ and $z$. Moreover, the equation for $z$ is linear in $z$,
and the equation for $\mu$ is linear in $\mu$.
This makes the analysis of the quadratic flow elementary.

Our main result concerns structural stability of $\bar \varphi$ under a class of
infinite-dimensional perturbations.
Let $(\Wcal_j)_{j\in\N_0}$ be a sequence of Banach spaces and $X_j = \Wcal_j \oplus \Vcal$.
We write $x_j \in X_j$ as $x_j=(K_j,V_j)=(K_j,g_j,z_j,\mu_j)$.
Suppose that we are given maps
$\psi_j : X_j \to \Wcal_{j+1}$ and $\rho_j : X_j \to \Vcal$.
Then we define $\Phi_j : X_j \to X_{j+1}$ by
\begin{equation}
\label{e:F}
  \Phi_j(K_j,V_j)
  = \big(\psi_j(K_j,V_j), \bar \varphi_j(V_j) + \rho_j(K_j,V_j)\big)
  .
\end{equation}
This is an infinite-dimensional perturbation of the 3-dimensional
quadratic flow $\bar \varphi$, which breaks triangularity and which involves the spaces
$\Wcal_j$ in a nontrivial way.
We impose estimates on $\psi_j$  and
$\rho_j$ below, which make $\Phi$ a third-order perturbation of $\bar\varphi$.

We give hypotheses under which there exists a
sequence $(x_j)_{j\in\N_0}$ with
$x_j \in X_j$ which is a \emph{global flow} of $\Phi$,
in the sense that
\begin{equation} \lbeq{flow}
  x_{j+1} = \Phi_j(x_j) \quad \text{for all $j \in \N_0$},
\end{equation}
obeying the boundary conditions that $(K_0,g_0)$ is fixed,
$z_j \to 0$, and $\mu_j \to 0$.
Moreover, within an appropriate space of sequences, this global flow is unique.

\medskip
As we discuss in more detail in Section~\ref{sec:mainres} below,
this result provides an essential ingredient in a renormalisation group
analysis of the 4-dimensional continuous-time weakly self-avoiding walk
\cite{BBS-saw4-log,BS11,BBS-saw4}, where the boundary condition $\lim_{j \to \infty} \mu_j = 0$
is the appropriate boundary condition for the study of a \emph{critical}
trajectory.
Similarly, our main result applies also to the analysis of the critical
behaviour of the $4$-dimensional $n$-component $|\varphi|^4$ spin model \cite{BBS-phi4-log}.
These applications provide our immediate
motivation to study the dynamical system $\Phi$, but we expect
that the methods developed here will have further applications to dynamical
systems arising in
renormalisation group analyses in statistical mechanics.

\subsection{Dynamical system}

We think of $\Phi = (\Phi_j)_{j\in\N_0}$ as the \emph{evolution map} of a discrete
time-dependent dynamical system, although it is more usual in
dynamical systems to have the spaces $X_j$ be identical.
Our application in \cite{BBS-phi4-log,BBS-saw4-log,BS11,BBS-saw4} requires the greater generality
of $j$-dependent spaces.

In the case that $\Phi$ is a time-independent dynamical system,
i.e., when $\Phi_j = \Phi$ and $X_j=X$ for all $j\in \N_0$,
its fixed points are of special interest: $x^* \in X$ is a fixed point of $\Phi$ if $x^*=\Phi(x^*)$.
The dynamical system is called \emph{hyperbolic} near a fixed point $x^* \in X$ if the spectrum of $D\Phi(x^*)$
is disjoint from the unit circle \cite{Ruel89}.
It is a classic result that for a hyperbolic system there exists a splitting $X\cong X_s \oplus X_u$ into a \emph{stable} and an
\emph{unstable manifold} near $x^*$. The stable manifold is a submanifold $X_s \subset X$
such that $x_j \to x^*$ in $X$, exponentially fast, when $(x_j)$ satisfies \eqref{e:flow} and $x_0 \in X_s$.
On the other hand, trajectories started on the unstable manifold move away
from the fixed point.
This result can  be generalised without much difficulty to the situation
when the $\Phi_j$ and $X_j$ are not necessarily identical, viewing ``$0$''
as a fixed point (although $0$ is the origin in different spaces $X_j$).
The hyperbolicity condition must now be imposed in a uniform way \cite[Theorem 2.16]{Bryd09}.

By definition, $\bar\varphi_j(0)=0$,
and we will make assumptions below which can be interpreted as a
weakened formulation
of the fixed point equation $\Phi_j(0)=0$
for the dynamical system defined by \refeq{F}.
Nevertheless, for simplicity we refer to $0$ as a fixed point of
$\Phi = (\Phi_j)$.
This fixed point $0$ is not hyperbolic
due to the two unit eigenvalues of the matrix in the first term of \refeq{Mmatrix}.
Thus the $g$- and $z$-directions are \emph{centre} directions,
which neither contract nor expand in a linear approximation.
On the other hand, the hypothesis that $\lambda_j \ge \lambda >1$ ensures
that the $\mu$-direction is \emph{expanding}, and we will assume below
that $\psi_j : X_j \to \Wcal_{j+1}$ is such that the
$K$-direction is \emph{contractive} near the fixed point $0$.
The behaviour of dynamical systems near non-hyperbolic fixed points
is much more subtle than for the hyperbolic case.
A general classification does not exist, and a nonlinear analysis is
required.

\subsection{Main result}
\label{sec:mainres}

In Section~\ref{sec:approximate-flow}, we give an elementary proof that
for $\gbar_0$ positive and sufficiently small,
there exists a unique choice of $(\bar{z}_0,\bar{\mu}_0)$ such
that the global flow $\bar V = (\gbar,\zbar,\mubar)$ of
$\bar\varphi$ satisfies
$(\zbar_\infty,\mubar_\infty) = (0,0)$,
where we write, e.g., $\zbar_\infty =\lim_{j\to\infty}\zbar_j$.
Our main result is that,
under the assumptions stated below,
there exists a unique global flow of $\Phi$ with any small initial condition
$(K_0,g_0)$ and with final condition $(\zbar_\infty,\mubar_\infty) = (0,0)$,
and that this flow is
a small perturbation of $\bar V$.

The sequence $\bar g = (\gbar_j)$ plays a prominent role in the analysis.
Determined by the sequence $(\beta_j)$, it obeys
\begin{equation}
\lbeq{gbar}
    \gbar_{j+1} = \gbar_j - \beta_j \gbar_j^2,
    \qquad \bar g_0 = g_0>0.
\end{equation}
We regard $\gbar$ as a known sequence (only dependent on
the initial condition $g_0$).
The following examples are helpful to keep in mind.

\begin{example}
\label{ex:chi}
(i)
Constant $\beta_j=b >0$.
In this case,
$\gbar_j \sim g_0(1+g_0b j)^{-1} \sim (b j)^{-1}$ as $j \to \infty$
(an argument for this standard fact is outlined in the proof of
\refeq{chigbd2} below).

\smallskip \noindent (ii)
Abrupt cut-off, with $\beta_j=b$
for $j \le J$ and $\beta_j=0$ for $j> J$, with $J \gg 1$.
In this case, $\gbar_j$
is approximately the constant $(b J)^{-1}$ for $j>J$.
In particular, $\gbar_j$ does not go to zero as $j \to \infty$.
\end{example}

Example~\ref{ex:chi} prompts us to make the following general definition
of a cut-off time for bounded sequences $\beta_j$.
Let $\betamax = \sup_{j \ge 0} |\beta_j|< \infty$, and let
$n_+ = n$ if $n \ge 0$ and otherwise $n_+=0$.
Given a fixed $\Omega > 1$, we define the $\Omega$-\emph{cut-off time} $\jm$ by
\begin{equation}
    \jm =
    \inf \{ k \geq 0 : |\beta_j| \le \Omega^{-(j-k)_+} \betamax \;
    \text{for all } j \ge 0\}.
\end{equation}
The infimum of the empty set is defined to equal $\infty$, e.g., if
$\beta_j=b$ for all $j$ then $\jm=\infty$.
By definition, $\jm \le j_{\Omega'}$ if $\Omega \le \Omega'$.
To abbreviate the notation, we write
\begin{equation}
\lbeq{chidef}
    \chi_{j} = \Omega^{-(j-\jm)_+}.
\end{equation}

The evolution maps $\Phi_j$ are specified by the real parameters
$\eta_j$, $\gamma_j$, $\lambda_j$, $\beta_j$, $\theta_j$, $\zeta_j$,
$\upsilon_j^{\smash{\alpha \beta}}$, together
with the maps $\psi_j$ and $\rho_j$ on $X_j$.
Throughout this paper, we
fix $\Omega >1$ and make Assumptions~(A1--A2) on the real parameters
and Assumption~(A3) on the maps, all stated below.
The constants in all estimates are permitted to depend on the constants in these assumptions,
including $\Omega$, but \emph{not} on $\jm$ and $g_0>0$.
Furthermore, we consider the situation when the parameters of $\bar\varphi_j$
are continuous maps from a metric space $\Mext$ of external parameters, $m \in \Mext$, into $\R$,
that the maps $\psi_j$ and $\rho_j$ similarly have continuous dependence on $m$,
and that $\jm$ is allowed to depend on $m$.
In this situation,
the constants in Assumptions~(A1--A3) are assumed to hold uniformly in $m$.

\medskip

\begin{ass}
  \emph{The sequence $\beta$:}
  The sequence $(\beta_j)$ is bounded:  $\betamax < \infty$.
  There exists $c>0$ such that $\beta_j \ge c$ for all but $c^{-1}$ values of $j \le \jm$.
\end{ass}

\begin{ass}
  \emph{The other parameters of $\bar\varphi$:}
  There exists $\lambda>1$ such that $\lambda_j \ge \lambda$ for all $j \geq 0$.
  There exists $c>0$ such that
  $\zeta_j \leq 0$ for all but $c^{-1}$ values of $j \leq \jm$.
  Each of
  $\eta_j$, $\gamma_j$, $\theta_j$, $\zeta_j$, $\upsilon_j^{\smash{\alpha \beta}}$
  is bounded in absolute value by $O(\chi_j)$,
  with a constant that is independent of both $j$ and $\jm$.
\end{ass}

Note that when $\jm<\infty$, Assumption~(A1) permits the possibility
that eventually $\beta_k=0$ for large $k$.
The simplest setting for the assumptions is for the case $\jm=\infty$,
for which
$\chi_j=1$ for all $j$.
Our applications include situations in which $\beta_j$ approaches a positive
limit as $j\to\infty$, but also situations in which $\beta_j$ is
approximately constant in $j$ over a long initial interval $j \le \jm$ and then
abruptly decays to zero.

In Section~\ref{sec:approximate-flow}, in preparation for the proof of the main
result, we prove the following elementary proposition
concerning flows of the 3-dimensional quadratic dynamical system $\bar\varphi$.

\begin{prop} \label{prop:approximate-flow}
  Assume (A1--A2).
  If $\gbar_0>0$ is sufficiently small,
  then there exists a unique global flow $\bar V = (\bar V)_{j\in \N_0} =
  (\gbar_j, \zbar_j, \mubar_j)_{j\in\N_0}$
  of $\bar\varphi$ with initial condition
  $\gbar_0$ and
  $(\zbar_\infty , \mubar_\infty) = (0, 0)$.
  This flow satisfies $\gbar_j=O(\gbar_0)$ and
  \begin{equation} \lbeq{approximate-flow}
    \chi_j \gbar_j^n
    = O\left(\frac{\gbar_0}{1+\gbar_0j}\right)^n,
    \quad
    \zbar_j = O(\chi_j \gbar_j) ,
    \quad
    \mubar_j = O(\chi_j \gbar_j),
  \end{equation}
  with constants independent of $j_\Omega$ and $\gbar_0$, and with the first
  estimate valid for real $n \in [1,\infty)$ with an $n$-dependent constant.
  Furthermore, $\bar V_j$ is continuously
  differentiable in the initial condition $\gbar_0$,
  for every $j \in \N_0$, and
  if the maps $\bar\varphi_j$ depend continuously on an external parameter
  such that (A1--A2) hold with uniform constants, then $\bar V_j$ is continuous in this parameter,
  for every $j \in \N_0$.
\end{prop}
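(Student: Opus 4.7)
The triangular structure of $\bar\varphi_j$---the $g$-recursion depends only on $g$, the $z$-recursion is linear in $z$ with a $g$-dependent source, and the $\mu$-recursion is linear in $\mu$ with a $(g,z)$-dependent source---reduces the proposition to three scalar subproblems. The initial condition $\gbar_0$ fixes $(\gbar_j)$ by forward iteration, while the final conditions $\zbar_\infty=\mubar_\infty=0$ select $(\zbar_j)$ and $(\mubar_j)$ uniquely by backward summation from infinity.

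For the $g$-equation \refeq{gbar}, I would analyse $u_j=1/\gbar_j$, which obeys $u_{j+1} = u_j + \beta_j + O(\beta_j^2/u_j)$. Telescoping gives $u_j = 1/g_0 + \sum_{i<j}\beta_i + \text{lower order}$. By (A1), for $j\le\jm$ the partial sum is comparable to $j$, yielding $\gbar_j = O(\gbar_0/(1+\gbar_0 j))$; for $j>\jm$ the tail $\sum_{i\ge\jm}|\beta_i|$ is summable by the cut-off definition, so $\gbar_j$ is comparable to $\gbar_{\jm}$, and the factor $\chi_j$ absorbs the resulting loss. Extending from $n=1$ to real $n\ge 1$ in \refeq{approximate-flow} is then straightforward from monotonicity of $(1+\gbar_0 j)^{-1}$ combined with geometric decay of $\chi_j$ past $\jm$.

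With $(\gbar_j)$ in hand, the $z$-equation reads $\zbar_{j+1}=(1-\zeta_j\gbar_j)\zbar_j - \theta_j\gbar_j^2$; iterating backwards from $\zbar_\infty=0$ yields
\begin{equation*}
  \zbar_j = \sum_{k\ge j}\theta_k\gbar_k^2\prod_{i=j+1}^{k}(1-\zeta_i\gbar_i)^{-1}.
\end{equation*}
By (A2) the factors $|\zeta_i\gbar_i|$ are uniformly small, and the sign condition $\zeta_i\le 0$ for most $i\le\jm$ keeps the infinite product bounded above and below by universal constants (the $c^{-1}$ exceptional indices contribute only a bounded multiplicative error). Since the source is $O(\chi_k\gbar_k^2)$, the estimate $\zbar_j=O(\chi_j\gbar_j)$ will follow from the summation bound $\sum_{k\ge j}\chi_k\gbar_k^2 = O(\chi_j\gbar_j)$, which I would prove by splitting at $\jm$: on $[j,\jm]$ the sum telescopes against the slowly decaying $\gbar_k$, while on $[\jm,\infty)$ it is dominated by a geometric series with ratio $\Omega^{-1}$. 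The $\mu$-recursion $\mubar_{j+1}=\lambda_j\mubar_j + \text{source}$ is similar but easier: since $\lambda_j\ge\lambda>1$, backward iteration produces a geometrically convergent sum whose size is controlled directly by (A2) and the just-established estimate for $\zbar$, giving $\mubar_j=O(\chi_j\gbar_j)$.

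Uniqueness within the class of flows with the stated boundary conditions is immediate from the series representations. $C^1$-dependence on $\gbar_0$ follows by differentiating the $g$-recursion---a linear recursion for $\partial_{\gbar_0}\gbar_j$ with coefficients close to $1$, handled by a discrete Gr\"onwall argument---and then differentiating the series for $\zbar_j,\mubar_j$ termwise, with the dominated convergence supplied by the bounds above. Continuity in an external parameter $m$ follows the same template, using continuous dependence of the parameters of $\bar\varphi_j$ and of $\jm$ on $m$ together with the uniformity of (A1--A2). The main technical obstacle throughout is bookkeeping: keeping all estimates uniform in $\jm$ requires careful tracking of which regime ($j\le\jm$ or $j>\jm$) dominates each term, and the weight $\chi_j$ is engineered precisely to make this transition seamless.
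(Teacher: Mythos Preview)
Your proposal is correct and follows essentially the same route as the paper: exploit triangularity to solve $g$, then $z$, then $\mu$ in succession; write $\zbar_j$ and $\mubar_j$ as backward series and bound them via $\sum_{k\ge j}\chi_k\gbar_k^2=O(\chi_j\gbar_j)$ together with the sign condition on $\zeta$ and the expansion $\lambda_j\ge\lambda>1$; then differentiate the recursions and series termwise for $C^1$-dependence on $\gbar_0$, and argue continuity in the external parameter by uniform convergence.

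The one place where your execution differs from the paper is the bound $\chi_j\gbar_j^n=O\bigl(\gbar_0/(1+\gbar_0 j)\bigr)^n$. You propose to telescope $u_j=1/\gbar_j$ directly; the paper instead first proves that $\gbar_j$ is monotone decreasing in each $\beta_k$, and then compares $\gbar_j$ with the reference sequence $\hat g_{j+1}=\hat g_j-c\hat g_j^2$ (where $c$ is the lower bound on most $\beta_j$ from (A1)), handling the finitely many exceptional indices and the regime $j>\jm$ separately. Both arguments are standard and yield the same conclusion; the paper's comparison argument is slightly cleaner in tracking the exceptional indices where $\beta_j<c$, while your $u_j$-telescoping is more direct but requires you to verify that the accumulated error $\sum_{i<j}\beta_i^2\gbar_i$ is genuinely lower order than $\sum_{i<j}\beta_i$ (it is, being logarithmic versus linear). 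A minor quibble: in your series for $\zbar_j$ the product should run over $i=j,\dots,k$ rather than $i=j+1,\dots,k$, but since each factor is $1+O(\gbar_0)$ this does not affect the estimates.
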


In particular, by \refeq{approximate-flow}, above scale $\jm$ each of $\zbar_j,\mubar_j$ decays exponentially.
We now define domains $D_j \subset X_j$ on which the perturbation $(\psi_j,\rho_j)$ is assumed to be defined,
and an assumption which states estimates for $(\psi_j,\rho_j)$. The domain and estimates depend on an initial
condition $g_0$ and the possible external parameter $m$.
For parameters $\domK,\domV>0$ and sufficiently small $g_0>0$, let $(\gbar_j,\zbar_j,\mubar_j)_{j\in\N_0}$
be the sequence determined by Proposition~\ref{prop:approximate-flow} with initial condition $\gbar_0=g_0$,
and define the domain $D_j = D_j(g_0, \domK,\domV) \subset X_j$  by
\begin{align}
  \lbeq{Djdef}
  D_j =
  \{
  x_j \in X_j :
  \|K_{j}\|_{\Wcal_j} &\leq \domK \chi_j\gbar_j^3,\nnb
  |g_j - \gbar_j| &\leq \domV \gbar_j^2 |\log \gbar_j|,\nnb
  |z_j - \zbar_j| &\leq \domV \chi_j\gbar_j^2 |\log \gbar_j|,\nnb
  |\mu_j - \mubar_j| &\leq \domV \chi_j\gbar_j^2 |\log \gbar_j|
  \}.
\end{align}
Note that if $\beta_j$ depends on an external parameter $m$, then the domain $D_j = D_j(g_0,m,\domK,\domV)$
also depends on this parameter $m$ through $\gbar_j =
\gbar_j(m)$.

Throughout the paper, we write
$D_\alpha\phi$ for the Fr\'echet derivative of a map $\phi$ with respect to the
component $\alpha$, and $L^m(X_j, X_{j+1})$ for the space of bounded $m$-linear maps
from $X_j$ to $X_{j+1}$.
The following assumption depends on positive parameters
$(g_0,\domK,\domV,\kappa,\Omega,R,M)$.
The norm $\|\cdot\|_{\Vcal}$ is the supremum norm on $\R^3$.
\begin{ass}
  \emph{The perturbation:}
  The maps
  $\psi_j : D_j \to \Wcal_{j+1} \subset X_{j+1}$ and
  $\rho_j : D_j \to \Vcal \subset X_{j+1}$
  are three times continuously Fr\'echet differentiable,
  there exist $\kappa \in (0,\Omega^{-1})$,
  $R \in (0,\domK(1-\kappa\Omega))$,
  and a constant $M>0$
  such that,
  for all $x_j = (K_j,V_j) \in D_j$,
    \begin{gather}
      \lbeq{R-bound}
      \|\psi_j(0, V_j)\|_{\Wcal_{j+1}} \le R\chi_{j+1}\gbar_{j+1}^3, \quad
      \|\rho_j(x_j)\|_{\Vcal} \le M \chi_{j+1}\gbar_{j+1}^3,
      \\
      \label{e:R-Lip-bound-K}
      \|D_K \psi_{j}(x_j)\|_{L(\Wcal_j,\Wcal_{j+1})} \leq \kappa,
      \quad
      \|D_K \rho_{j}(x_j)\|_{L(\Wcal_j,\Vcal)} \leq M,
      \\
      \intertext{and such that, for both $\phi=\psi$ and $\phi =\rho$
        and $2 \leq n+m \leq 3$,
      }
      \label{e:R-Lip-bound-V}
      \|D_V \phi_j(x_j)\|_{L(\Vcal, X_{j+1})}
      \leq M \chi_{j+1} \gbar_{j+1}^2, % \|V_j\|_{\Vcal}^2,
      \\
      \|D_V^mD_K^n  \phi_j(x_j)\|_{L^{n+m}(X_j, X_{j+1})}
      \leq M (\chi_{j+1} \gbar_{j+1}^3)^{1-n}  (\gbar_{j+1}^2 |\log \gbar_{j+1}|)^{-m}
      .
      \label{e:R-Lip-bound-higher}
    \end{gather}
  \end{ass}

The bounds \refeq{R-bound} guarantee that $\Phi$ is a third-order perturbation of $\bar\varphi$.
Moreover, since $\kappa <1$, the $\psi$-part of \refeq{R-Lip-bound-K}
ensures that the $K$-direction  is contractive for $\Phi$.
The bounds \refeq{R-Lip-bound-higher} permit the second and third derivatives
of $\psi$ and $\rho$ to be quite large.
The restriction on $R$ in (A3) may seem unnatural initially, but its role
is seen in Lemma~\ref{lem:small-ball} below.

The following elementary lemma provides a statement of
domain compatibility which shows that a sequence $(\bar K_j)_{j\in\N_0}$
can be defined inductively by $\bar K_{j+1} = \psi_j(\bar K_j, \bar V_j)$.
Denote by $\pi_K D_j$ the projection of $D_j$ onto $\Wcal_j$, i.e.,
\begin{equation}
  \pi_KD_j
  =
  \{
  K_j \in \Wcal_j : \|K_{j}\|_{\Wcal_j} \leq r \chi_j\gbar_j^3
  \} .
\end{equation}

\begin{lemma} \label{lem:small-ball}
  Assume (A3), let $\domK^* \in (R/(1-\kappa\Omega),\domK]$, and assume that $g_0>0$ is sufficiently small. Then
  $\psi_j(D_j(g_0, \domK^*,\domV)) \subseteq \pi_KD_{j+1}(g_0,\domK^*,\domV)$.
\end{lemma}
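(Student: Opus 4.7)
The plan is to show that the claimed image containment follows by a direct application of the contraction estimate for $D_K\psi_j$ together with the base-point bound on $\psi_j(0,V_j)$, combined with the fact that the weight $\chi_j \bar g_j^3$ is only slowly varying in $j$.

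First, I would fix $x_j = (K_j,V_j) \in D_j(g_0,\domK^*,\domV)$. By the fundamental theorem of calculus along the segment from $(0,V_j)$ to $(K_j,V_j)$ in $\Wcal_j \oplus \Vcal$, combined with the uniform bound $\|D_K\psi_j\|_{L(\Wcal_j,\Wcal_{j+1})} \le \kappa$ from (A3), one obtains
\begin{equation*}
  \|\psi_j(K_j,V_j) - \psi_j(0,V_j)\|_{\Wcal_{j+1}} \le \kappa \|K_j\|_{\Wcal_j} \le \kappa \domK^* \chi_j \gbar_j^3.
\end{equation*}
Combined with $\|\psi_j(0,V_j)\|_{\Wcal_{j+1}} \le R \chi_{j+1}\gbar_{j+1}^3$ from \refeq{R-bound}, the triangle inequality yields
\begin{equation*}
  \|\psi_j(K_j,V_j)\|_{\Wcal_{j+1}} \le R \chi_{j+1}\gbar_{j+1}^3 + \kappa \domK^* \chi_j \gbar_j^3.
\end{equation*}

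Next, I would compare the weights at scales $j$ and $j+1$. By the definition $\chi_j = \Omega^{-(j-\jm)_+}$ we always have $\chi_j \le \Omega \chi_{j+1}$. For the $\gbar$-ratio, the recursion \refeq{gbar} gives $\gbar_{j+1} = \gbar_j(1-\beta_j\gbar_j)$, and since Proposition~\ref{prop:approximate-flow} gives $\gbar_j = O(\gbar_0)$ uniformly in $j$, we have $\beta_j\gbar_j \le \betamax \cdot O(\gbar_0)$, so
\begin{equation*}
  \frac{\gbar_j^3}{\gbar_{j+1}^3} = (1-\beta_j\gbar_j)^{-3} \le 1 + \varepsilon(g_0),
\end{equation*}
where $\varepsilon(g_0) \to 0$ as $g_0 \to 0$. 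Combining these,
\begin{equation*}
  \|\psi_j(K_j,V_j)\|_{\Wcal_{j+1}}
  \le \bigl[R + \kappa\Omega(1+\varepsilon(g_0))\domK^*\bigr]\chi_{j+1}\gbar_{j+1}^3.
\end{equation*}

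The remaining task is to verify that the bracketed coefficient is at most $\domK^*$. Since $\kappa\Omega < 1$ by (A3), the hypothesis $\domK^* > R/(1-\kappa\Omega)$ gives $R + \kappa\Omega \domK^* < \domK^*$; by continuity of the bound in $\varepsilon$, there is $\delta > 0$ such that $R + \kappa\Omega(1+\varepsilon)\domK^* \le \domK^*$ whenever $\varepsilon \le \delta$. Choosing $g_0$ small enough that $\varepsilon(g_0) \le \delta$ (using the uniformity in $\jm$ from (A1)) completes the argument. The main (very mild) obstacle is ensuring that the required smallness of $g_0$ can be chosen uniformly in $j$ and in the external parameter $m$; this is guaranteed because the bound on $\gbar_j = O(\gbar_0)$ in Proposition~\ref{prop:approximate-flow} is uniform, as are all constants in (A1)--(A3).
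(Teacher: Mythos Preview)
Your proof is correct and follows essentially the same route as the paper: bound $\|\psi_j(K_j,V_j)\|$ by $\|\psi_j(0,V_j)\|+\kappa\|K_j\|$ via the derivative bound in (A3), then convert $\chi_j\gbar_j^3$ to $(1+o(1))\Omega\chi_{j+1}\gbar_{j+1}^3$ and absorb the slack using $\domK^*>R/(1-\kappa\Omega)$. The only cosmetic difference is that the paper cites Lemma~\ref{lem:elementary-recursion}(i) for $\gbar_j^3/\gbar_{j+1}^3=1+O(g_0)$, while you derive it directly from the recursion and Proposition~\ref{prop:approximate-flow}.
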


\begin{proof}
  The triangle inequality and the first bounds of \refeq{R-bound}--\refeq{R-Lip-bound-K} imply
  \begin{align}
    \|\psi_j(K_j,V_j)\|_{\Wcal_{j+1}}
    &\leq \|\psi_j(0,V_j)\|_{\Wcal_{j+1}} + \|\psi_j(K_j,V_j)-\psi_j(0,V_j)\|_{\Wcal_{j+1}}
    \nnb &
    \le R\chi_{j+1} \gbar_{j+1}^3 + \kappa \domK^* \chi_j \gbar_j^3.
  \end{align}
  Therefore,
  \begin{align}
    \|\psi_j(K_j,V_j)\|_{\Wcal_{j+1}}
    &\leq R\chi_{j+1} \gbar_{j+1}^3 + \domK^*\kappa\Omega(1+O(g_0))\chi_{j+1}\gbar_{j+1}^3
    \nnb
    &\leq \domK^* \chi_{j+1}\gbar_{j+1}^3
    ,
  \end{align}
  where the first inequality uses
  the facts that $\gbar_j^3/\gbar_{j+1}^3 = 1+O(g_0)$
  (verified in Lemma~\ref{lem:elementary-recursion}(i) below)
  and that $g_0>0$ is sufficiently small,
  and the second inequality uses the restriction on $R$ in (A3).
\end{proof}
The sequence $\bar x = (\bar K_j, \bar V_j)_{j\in\N_0}$ is
a flow of the dynamical system $\bar\Phi = (\psi,\bar\varphi)$ in the sense of \refeq{flow},
with initial condition $(\bar K_0, \gbar_0) = (K_0, g_0)$
and final condition $(\zbar_\infty, \mubar_\infty)=(0,0)$.
We consider this sequence as a function
$(K_0,g_0) \mapsto  \bar x(K_0,g_0)$
of the initial condition $(K_0,g_0)$.
Our main result is the following
theorem, which shows that flows $x$
of the dynamical system $\Phi = (\psi, \bar\varphi + \rho) = \bar\Phi + (0,\rho)$
are perturbations of the flows $\bar x$ of $\bar\Phi$.

\begin{theorem}
\label{thm:flow}
  Assume  (A1--A3) with parameters $(\domK,\domV,\kappa,\Omega,R,M)$
  and $g_0 = g_0'$,
  and let $\domK_* \in (R/(1-\kappa\Omega),\domK)$, $\b \in (0,1)$.
  There exists $\domV_* > 0$ such that for all $\domV \geq \domV_*$,
  there exists $g_*>0$ such that if $g_0' \in (0,g_*]$ and
  $\|K_0'\|_{\Wcal_0} \leq \domK_*g_0^3$, the following conclusions hold.
  \begin{enumerate}[(i)]
  \item
  There exists a neighbourhood
  $\I = \I(K_0',g_0') \subset \Wcal_0 \oplus \R$ of $(K_0',g_0')$
  such that, for initial conditions $(K_0, g_0) \in \I$,
  there exists a global flow $x$ of $\Phi=(\psi,\bar\varphi+\rho)$ with $(z_\infty,\mu_\infty)=(0,0)$
  such that, with $\bar x$ the unique flow of $\bar\Phi=(\psi,\bar\varphi)$ determined by the same
  boundary conditions,
  \begin{align}
     \lbeq{VVbar1}
     \| K_j -\bar K_j\|_{\Wcal_j}
     &\leq \b (\domK-\domK_*) \chi_j \gbar_j^3,
     \\
     \lbeq{VVbar2}
     |g_j - \gbar_j|
     &\leq \b \domV \gbar_j^2 |\log \gbar_j|,
     \\
     \lbeq{VVbar3}
     |z_j - \zbar_j|
     & \leq \b \domV \chi_j \gbar_j^2 |\log \gbar_j|     ,
     \\
     \lbeq{VVbar4}
     |\mu_j -\mubar_j|
     &\leq \b \domV \chi_j \gbar_j^2 |\log \gbar_j|
     .
  \end{align}
  The sequence $x$
  is the unique solution to
  \eqref{e:flow} which obeys
  these boundary conditions
  and the bounds \refeq{VVbar1}--\refeq{VVbar4}.

  \item
  For every $j \in \N_0$, the map
  $(K_j,V_j): \I \to \Wcal_j \oplus \Vcal$ is $C^1$
  and obeys
  \begin{equation} \lbeq{derivbd}
    \ddp{z_0}{g_0} = O(1), \quad \ddp{\mu_0}{g_0} = O(1).
  \end{equation}
\end{enumerate}
\end{theorem}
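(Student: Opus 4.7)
The plan is to reformulate the mixed boundary value problem as a fixed-point equation in a Banach space of weighted sequences and then apply Banach's contraction mapping theorem, along the lines indicated by the abstract. Writing the corrector $y_j = x_j - \bar x_j$ with components $(y^K_j,y^g_j,y^z_j,y^\mu_j)$, I would work in the space $\Sb$ of sequences endowed with the weighted sup norm
\begin{equation*}
  \|y\|_{\Sb} = \sup_{j\ge 0} \max\!\left\{
    \frac{\|y^K_j\|_{\Wcal_j}}{(\domK-\domK_*)\chi_j\gbar_j^3},\;
    \frac{|y^g_j|}{\domV\gbar_j^2|\log \gbar_j|},\;
    \frac{|y^z_j|}{\domV\chi_j\gbar_j^2|\log \gbar_j|},\;
    \frac{|y^\mu_j|}{\domV\chi_j\gbar_j^2|\log \gbar_j|}
  \right\},
\end{equation*}
so that the closed $\b$-ball $B_\b\subset\Sb$ is precisely the set of correctors encoded by \refeq{VVbar1}--\refeq{VVbar4}. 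Lemma~\ref{lem:small-ball} and Proposition~\ref{prop:approximate-flow} together ensure that for any $y\in B_\b$ and $g_0$ small, the sequence $\bar x+y$ lies in the domain $D_j$ on which $\psi_j,\rho_j$ are defined.

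I would then define a map $T:B_\b\to\Sb$ by splitting the two-point boundary value problem according to the stability type of each coordinate. For given $y$, set $x=\bar x+y$ and construct $T(y)$ by integrating $K$ forward from the prescribed $K_0$ via $K_{j+1}=\psi_j(x_j)$, integrating $g$ forward from $g_0$ via the full equation including $\rho^g_j(x_j)$, and integrating $z,\mu$ \emph{backward} from the final conditions $z_\infty=\mu_\infty=0$, via telescoping sums of the schematic form
\begin{equation*}
  (Ty)^\mu_j = -\sum_{k\ge j}\Big(\prod_{i=j}^{k}\lambda_i^{-1}\Big)\Delta F^\mu_k,
  \qquad
  (Ty)^z_j  = \sum_{k\ge j}\Big(\prod_{i=j}^{k-1}(1-\zeta_i g_i)^{-1}\Big)\Delta F^z_k ,
\end{equation*}
where $\Delta F^\mu_k,\Delta F^z_k$ are the differences of forcings in the perturbed and unperturbed flows. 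The $\mu$-sum converges by $\lambda_j\ge\lambda>1$; the $z$-sum converges because Assumption~(A2) keeps the products $\prod(1-\zeta_i g_i)^{-1}$ bounded, and because the forcing is $O(\chi_k\gbar_k^2)$, summable below $\jm$ through $\gbar_k^2\sim k^{-2}$ by Proposition~\ref{prop:approximate-flow} and above $\jm$ through the exponential decay of $\chi_k$.

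The heart of the proof is showing that $T$ maps $B_\b$ into itself and is a contraction. Self-mapping of the $K$-block is exactly Lemma~\ref{lem:small-ball}, which is why $\domK_*>R/(1-\kappa\Omega)$ is imposed; for $g,z,\mu$, the third-order bound \refeq{R-bound} on $\rho_j$, measured against the second-order weights of $\Sb$, yields an extra factor of $\gbar_{j+1}|\log\gbar_{j+1}|^{-1}$, so the output norm is dominated by $O(g_*|\log g_*|^{-1})$ plus backward-summed contributions that are absorbed once $\domV\ge\domV_*$ is taken sufficiently large. The contraction property uses \refeq{R-Lip-bound-K}--\refeq{R-Lip-bound-higher}: $\kappa\Omega<1$ gives the $K$-contraction, while \refeq{R-Lip-bound-V} on $D_V\phi$, weighted by $\gbar_{j+1}^2|\log\gbar_{j+1}|$, produces a $V$-Lipschitz constant of order $|\log\gbar_{j+1}|^{-1}$, strictly less than one uniformly in $j$ for $g_0$ small. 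Banach's theorem then delivers the unique fixed point $y^*\in B_\b$, from which part~(i) follows.

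Part~(ii) is then a consequence of the uniform contraction principle: the map $T$ depends $C^1$ on $(K_0,g_0)\in\I$ because $\bar x$ is $C^1$ in $g_0$ (Proposition~\ref{prop:approximate-flow}) and $\psi_j,\rho_j$ are three times continuously Fr\'echet differentiable by (A3), with norms compatible with the forward and backward summations. Differentiating the fixed-point equation and reading off $\partial z_0/\partial g_0$ and $\partial\mu_0/\partial g_0$ from the backward sums—using that $\partial\zbar_0/\partial g_0,\partial\mubar_0/\partial g_0 = O(1)$—yields \refeq{derivbd}. The main obstacle I expect is the self-mapping estimate for the $z$ and $\mu$ blocks: the backward summations must not accumulate a stray factor of $|\log\gbar_j|$ or of $\jm$, and this depends on the precise balance between the $\chi_j\gbar_j^2|\log\gbar_j|$ output weights and the $\chi_j\gbar_j^3$ smallness of $\rho_j$, in combination with Assumption~(A1) to secure the quadratic decay of $\gbar_j$ below scale $\jm$.
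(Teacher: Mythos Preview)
Your approach is a direct Banach contraction on the corrector $y=x-\bar x$, whereas the paper takes a genuinely different route: it introduces the homotopy $\Phi^t=(\psi,\bar\varphi+t\rho)$, differentiates the flow in $t$, and recasts the problem as an ODE $\dot x=F(t,x)$, $x(0)=\bar x$, in the weighted sequence space, with $F(t,x)=S(t,x)\rho(x)$ and $S(t,x)$ the solution operator of the equation linearised about the \emph{current} $x$. Standard ODE existence then only requires $F$ to be bounded and Lipschitz; the bound $\|F\|_{X^\w}\le C_SM/\domV$ is what forces $\domV\ge\domV_*$, and no contraction constant below $1$ is needed.

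Your contraction estimate has a gap precisely at the non-hyperbolic directions. The claimed $V$-Lipschitz constant of order $|\log\bar g_{j+1}|^{-1}$ accounts only for the perturbation $D_V(\psi,\rho)$ via \refeq{R-Lip-bound-V}; it omits the contribution of the main flow $\bar\varphi$. For the $g$-block the linearised multiplier $1-2\beta_j g_j$ yields a one-step Lipschitz factor equal to $1+O(\bar g_j^2)$ in the $\w_g$-norm, not strictly below $1$. For the $z$-block, the quadratic coupling $-\theta_j g_j^2$ in $\bar\varphi$ contributes $2\theta_j\bar g_j(y^g_j-y'^g_j)$ to $\Delta F^z_j$; with $|\theta_j|=O(\chi_j)$ and your weight $\w_{g,j}=\domV\bar g_j^2|\log\bar g_j|$ this is $O(\domV\chi_j\bar g_j^3|\log\bar g_j|)\|y-y'\|$, and after summing against the $O(1)$ backward propagator (Lemma~\ref{lem:elementary-recursion}(ii-a) with $n=3$) one obtains $O(\domV\chi_j\bar g_j^2|\log\bar g_j|)\|y-y'\|$, i.e.\ an $O(1)$ Lipschitz constant in $\w_z$ that is \emph{independent of} $\domV$ and $g_0$ and hence cannot be driven below $1$ by the available parameters. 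A contraction scheme can be salvaged only by a more elaborate staging of $T$ (solving the $g$-equation fully before feeding its output into the $z,\mu$ sums), which your sketch does not set up. The paper's ODE formulation sidesteps this entirely because linearising about $x$ absorbs all of $D\bar\varphi$ into the bounded operator $S(t,x)$, reducing existence to a smallness bound on $\rho$ rather than a strict contraction.
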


\begin{rk} \label{rk:Nrad}
  The proof of Theorem~\ref{thm:flow}
  shows that $\I$ contains a ball centred at $(K'_0,g'_0)$ whose
  radius depends only on $g'_0$, the constants in (A1--A3), and $a_*,b,h$,
  and that this radius is bounded below away from
  zero uniformly in $g_0'$ in a compact subset of $(0,g_*]$.
\end{rk}

Because of its triangularity, an exact analysis of the flows of $\bar\varphi$
with the boundary conditions of interest
is straightforward: the three equations for $g,z,\mu$
can be solved successively
and we do this in
Section~\ref{sec:approximate-flow} below.
Triangularity does not hold for $\Phi$, and
we prove in Sections~\ref{sec:pf}--\ref{sec:pf-DPhiyr} below that the flows of $\Phi$ with the boundary conditions of interest
nevertheless remain close to the flows of $\bar\varphi$ with the same boundary conditions.

\medskip

\begin{paragraph}{Application}

A fundamental element in renormalisation group analysis concerns
the flow of local interactions obtained via iteration of a renormalisation
group map \cite{WK74}.
The dynamical system \refeq{F} arises
as part of renormalisation group studies of
the critical behaviour of two different but related models:
the $4$-dimensional $n$-component $|\varphi|^4$ spin model \cite{BBS-phi4-log}, and the
$4$-dimensional  continuous-time weakly self-avoiding
walk \cite{BBS-saw4-log,BBS-saw4} (see \cite{BS11} for a preliminary version).
The main results
of \cite{BBS-saw4-log,BBS-saw4} are that,
for the continuous-time weakly self-avoiding walk in dimension four,
the susceptibility diverges with a logarithmic correction
as the critical point is approached,
and the critical two-point function has $|x|^{-2}$ decay.
Related results are obtained for the $4$-dimensional $n$-component
$|\varphi|^4$ spin model in \cite{BBS-phi4-log}, complementing and in some cases
extending results of \cite{FMRS87,GK86,Hara87,HT87}.
Theorem~\ref{thm:flow}
is an essential ingredient in analysing the flows in
\cite{BBS-phi4-log,BBS-saw4-log,BBS-saw4}, and the uniformity of
\refeq{VVbar1}--\refeq{VVbar4} in the cut-off time (for a given $\Omega$) is needed.
In \cite{BBS-phi4-log,BBS-saw4-log,BBS-saw4}, the index $j$ represents an increasingly large length
scale,
the spaces $\Wcal_j$ have a subtle definition and are of infinite dimension,
and their $j$-dependence is an inevitable consequence of applying the
renormalisation group to a lattice model.
\end{paragraph}

\begin{rk} \label{rk:flow}
  (i) For $\jm=\infty$ and with
  \refeq{approximate-flow}, the bounds \refeq{VVbar1}--\refeq{VVbar4}
  imply $\|K_j\|_{\Wcal_j} = O(j^{-3})$ and $V_j - \bar{V}_{j} = O(j^{-2} \log j)$.
  However, the latter bounds do not reflect that $K_j,V_j\to 0$
  as $g_0 \to 0$, while the former do.
  Furthermore, \refeq{approximate-flow} implies $\chi_j\gbar_j \to 0$
  as $j\to \infty$ (also when $\jm < \infty$), and thus \eqref{e:VVbar1}
  and \eqref{e:VVbar3}--\eqref{e:VVbar4} imply
  $K_j \to 0$, $z_j \to 0$, $\mu_j \to 0$ as $j\to\infty$.
  More precisely, these estimates imply
  $z_j,\mu_j = O(\chi_j\gbar_j)$ so that $z_j$ and $\mu_j$
  decay exponentially after the $\Omega$-cut-off time
  $\jm$; we interpret this as indicating
  that the boundary condition $(z_\infty,\mu_\infty)=(0,0)$ is essentially achieved
  already at $\jm$.

  \smallskip\noindent
  (ii)
  We conjecture that the error bounds in \eqref{e:VVbar1}--\eqref{e:VVbar4}
  have optimal decay as $j\to\infty$.  Some indication of this can be found
  in Remark~\ref{rk:weights} below.
\end{rk}

Theorem~\ref{thm:flow} is an analogue of a \emph{stable manifold
theorem} for the non-hyperbolic dynamical
system defined by \eqref{e:F}.  It is inspired by
\cite[Theorem~2.16]{Bryd09} which however holds only in
the hyperbolic setting.
Irwin \cite{Irwi70} showed that the stable manifold theorem for
hyperbolic dynamical systems
is a consequence of the implicit function theorem in Banach spaces
(see also \cite{Ruel89,Shub87}).
Irwin's approach was inspired by Robbin \cite{Robb68}, who showed
that the local existence
theorem for ordinary differential equations is a consequence of the
implicit function theorem.
By contrast,
in our proof of Theorem~\ref{thm:flow}, we directly apply the local
existence theorem for ODEs, without explicit mention of the implicit
function theorem.
This turns out to be advantageous to deal with the lack of hyperbolicity.

Our choice of $\bar\varphi$ in \refeq{Mmatrix} has a specific triangular
form.  One reason for this is that \refeq{Mmatrix} accommodates what
is required in our application in \cite{BBS-saw4-log,BS11,BBS-saw4,BBS-phi4-log}.
A second reason is that additional nonzero terms in $\bar \varphi$ can lead to the
failure of Theorem~\ref{thm:flow}.
The condition that $\beta_j$ is mainly non-negative is important for
the sequence $\gbar_j$ of
\eqref{e:gbar} to remain bounded.
The following example shows that
our sign restriction on the $\zeta_j$ term in the flow of
$\zbar$ is also important,
since positive $\zeta_j$ can lead to violation of a
conclusion of Theorem~\ref{thm:flow}.

\begin{example}
\label{ex:zeta-ass}
  Suppose that $\zeta_j = \theta_j= \beta_j = 1$,
  that $\rho=0$, and that $\gbar_0>0$ is small.
  For this constant $\beta$ sequence, $\jm=\infty$
  (for any $\Omega >1$) and hence $\chi_j =1$ for all $j$.
  As in Example~\ref{ex:chi}, $\gbar_j \sim j^{-1}$.
  By \refeq{Mmatrix} and \eqref{e:gbar},
  \begin{equation}
    \bar z_{j+1}
     = \bar z_j (1- \bar g_j) -\bar g_j^2
     = \bar z_j \frac{\bar g_{j+1}}{\bar g_j} - \bar g_j^2.
  \end{equation}
  Let $\bar y_j = \bar z_j/\bar g_j$.
  Since $\bar g_j / \bar g_{j+1} = (1-\gbar_j)^{-1} \ge 1$, we obtain
  $\bar y_j \ge \bar y_{j+1} + \bar g_j$ and hence
  \begin{equation}
    \bar y_j
    \ge
    \bar y_{n+1}  + \sum_{l=j}^n    \bar g_l
    .
  \end{equation}
  Suppose that $\bar z_j = O(\bar g_j)$, as in \refeq{approximate-flow}.
  Then $\bar y_j = O(1)$ and hence by
  taking the limit $n\to \infty$ we obtain
  \begin{equation}
    \bar y_j
    \geq \limsup_{n\to\infty}
    \left( \bar y_{n+1}  + \sum_{l=j}^n  \bar g_l \right)
    \geq -C + \sum_{l=j}^\infty \bar g_l.
  \end{equation}
  However, since $\gbar_j \sim j^{-1}$, the last sum diverges.
  This contradiction implies that the
  conclusion $\bar z_j = O(\bar g_j)$ of \refeq{VVbar3} is impossible.
\end{example}

\subsection{Continuity in external parameter}

The uniqueness statement of Theorem~\ref{thm:flow} implies the following
corollary regarding
continuous dependence on an external parameter of the global flow for \refeq{flow}
given by Theorem~\ref{thm:flow}.
In the statement of the corollary, we assume that $D_j$ is actually the union
over $m \in \Mext$ of the domains on the right-hand side of \eqref{e:Djdef}.
Recall that the latter domains depend on $m$ through $\beta$ and $\gbar$.

\begin{cor}
  \label{cor:masscont}
   Assume that $\Phi_j: D_j \times \Mext \to X_{j+1}$ are continuous maps
   and that (A1--A3) hold for $\Phi_j(\cdot, m)$, for every $m \in \Mext$,
   and with parameters independent of $m$.
   Let $x(m,u_0) = (K(m,u_0),V(m,u_0))$
   be the global flow for external parameter $m$ and initial condition $u_0=(K_0,g_0)$ guaranteed by Theorem~\ref{thm:flow}.
   Then $x_j$ is continuous in $(m,u_0)$ for each $j \in \N_0$.
\end{cor}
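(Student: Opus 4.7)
My plan is to combine sequential compactness of the two initially-undetermined scalar components $(z_0, \mu_0) \in \R^2$ with the uniqueness clause of Theorem~\ref{thm:flow}. Fix $(m_0, u_0^{(0)})$ and take an arbitrary sequence $(m_n, u_0^{(n)}) \to (m_0, u_0^{(0)})$; Remark~\ref{rk:Nrad} ensures that the flow $x^{(n)} := x(m_n, u_0^{(n)})$ is defined for all large $n$, since the radius of $\I$ is bounded below uniformly in $m$ in a neighbourhood of $m_0$. I will show that every subsequence of $(x^{(n)})_n$ admits a further subsequence along which $x^{(n_k)}_j \to x(m_0, u_0^{(0)})_j$ in $X_j$ for every $j \in \N_0$; a standard topological argument (the limit being identified uniquely) will then yield continuity of $x_j$ at $(m_0, u_0^{(0)})$.

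To extract the subsequence, observe that $(z^{(n)}_0, \mu^{(n)}_0)$ lies in a bounded subset of $\R^2$, by the domain condition $x^{(n)}_0 \in D_0$ and the uniformity of (A1--A2) in $m$; so along a subsequence $(z^{(n_k)}_0, \mu^{(n_k)}_0) \to (z^\sharp, \mu^\sharp)$. Together with $K_0^{(n_k)} \to K_0^{(0)}$ in $\Wcal_0$ and $g_0^{(n_k)} \to g_0^{(0)}$, this gives $x^{(n_k)}_0 \to y^\sharp_0 := (K_0^{(0)}, g_0^{(0)}, z^\sharp, \mu^\sharp)$ in $X_0$. Define $y^\sharp$ inductively by $y^\sharp_{j+1} = \Phi_j(y^\sharp_j, m_0)$; an induction on $j$ using the assumed joint continuity of $\Phi_j : D_j \times \Mext \to X_{j+1}$ then yields $x^{(n_k)}_j \to y^\sharp_j$ in $X_j$ for every $j \in \N_0$. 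By the uniqueness clause of Theorem~\ref{thm:flow}, identifying $y^\sharp = x(m_0, u_0^{(0)})$ reduces to verifying that $y^\sharp$ satisfies the bounds \eqref{e:VVbar1}--\eqref{e:VVbar4} with external parameter $m_0$; these bounds then automatically imply the boundary conditions $(z^\sharp_\infty, \mu^\sharp_\infty) = (0,0)$ via Remark~\ref{rk:flow}(i).

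The main obstacle will be this last verification. For each fixed $j$, the reference flow $\bar V_j(m)$ and the continuous factors $\bar g_j(m)^2 |\log \bar g_j(m)|$ on the right-hand sides of \eqref{e:VVbar1}--\eqref{e:VVbar4} pass to the desired limits by Proposition~\ref{prop:approximate-flow}. The delicate factor is $\chi_j(m) = \Omega^{-(j-\jm(m))_+}$, since $\jm(m)$ is defined through a global supremum over the infinite sequence $(\beta_j(m))$ and need not depend continuously on $m$. The approach I would pursue is to establish lower semicontinuity of $\jm$ at $m_0$: if $\jm(m_0) > k$, a single index $j^*$ certifies this through a strict inequality $|\beta_{j^*}(m_0)| > \Omega^{-(j^*-k)_+} \|\beta(m_0)\|_\infty$, which, via continuity of $\beta_{j^*}(m)$ and the uniform control on $\|\beta(m)\|_\infty \le \betamax$ provided by (A1), should persist for $m$ near $m_0$ and give $\limsup_k \chi_j(m_{n_k}) \leq \chi_j(m_0)$. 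As a safety net, the trivial bound $\chi_j(m) \le 1$ already gives weaker bounds on $y^\sharp_j$ that, together with the uniform decay in Proposition~\ref{prop:approximate-flow} and the strict inequalities $\b<1$ and $\domK_* < \domK$ built into Theorem~\ref{thm:flow}, suffice to place $y^\sharp$ in the uniqueness class. Applying Theorem~\ref{thm:flow} then gives $y^\sharp = x(m_0, u_0^{(0)})$, completing the argument.
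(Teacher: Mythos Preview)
Your overall strategy---extract a convergent subsequence of $(z_0^{(n)},\mu_0^{(n)})$, propagate the limit forward by continuity of $\Phi_j$, and invoke the uniqueness clause of Theorem~\ref{thm:flow}---is exactly the paper's. Two points deserve comment.

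First, you are right to flag the factor $\chi_j(m)$ as the only nontrivial term when passing the bounds \eqref{e:VVbar1}--\eqref{e:VVbar4} to the limit. The paper does not attempt any semicontinuity argument; it simply asserts $\chi_j(m')\to\chi_j(m)$ and proceeds. You should read this as part of the standing continuity hypotheses on the external parameter (in the applications, $j_\Omega$ is explicit and this is checked directly). Your proposed fix, however, points the wrong way: lower semicontinuity of $j_\Omega$ gives $\liminf_k j_\Omega(m_{n_k})\ge j_\Omega(m_0)$, hence $(j-j_\Omega(m_{n_k}))_+\le (j-j_\Omega(m_0))_+$ and therefore $\chi_j(m_{n_k})\ge \chi_j(m_0)$, the \emph{opposite} of the inequality $\limsup_k \chi_j(m_{n_k})\le\chi_j(m_0)$ that you need. (And the certificate argument itself is shaky, since $\|\beta(m)\|_\infty$ in the definition of $j_\Omega$ is the $m$-dependent supremum, not a fixed uniform bound.) Your safety net does not close the gap either: the uniqueness statement in Theorem~\ref{thm:flow} is formulated for sequences satisfying \eqref{e:VVbar1}--\eqref{e:VVbar4} \emph{with} the $\chi_j$ factors, i.e., lying in $\x+\b\B$ with the weights \eqref{e:weights}; dropping $\chi_j$ puts $y^\sharp$ outside that ball when $j>j_\Omega(m_0)$, so the uniqueness argument does not apply.

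Second, a small omission: to pass \eqref{e:VVbar1} to the limit you also need $\bar K_j(m',u_0')\to\bar K_j(m_0,u_0^{(0)})$. The paper obtains this by the obvious induction using continuity of $\psi_j$ on $D_j\times\Mext$ and the continuity of $\bar V_j$ already supplied by Proposition~\ref{prop:approximate-flow}; you should mention it.
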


\begin{proof}
  We fix $m \in \Mext$, $u_0 \in \I$ and show that $x_j$ is continuous at this fixed $(m,u_0)$.
  For any $m' \in \Mext$, $u_0' \in \I$, let $x(m',u_0')=(K(m',u_0'),V(m',u_0'))$ denote the unique global
  flow of Theorem~\ref{thm:flow}; it satisfies the estimates
    \begin{align}
    \|K_j(m') - \bar K_j(m')\|_{\Wcal_j} \lbeq{VVbar1mp}
    &\leq \b (\domK-\domK_*) \chi_j(m') \gbar_j(m')^3
    ,
    \\
    |g_j(m') - \bar g_j(m')| \lbeq{VVbar2mp}
    &\leq \b \domV \gbar_j(m')^2 |\log\gbar_j(m')|
    ,
    \\
    |\mu_j(m') - \bar \mu_j(m')| \lbeq{VVbar3mp}
    &\leq \b \domV \chi_j(m') \gbar_j(m')^2 |\log\gbar_j(m')|
    ,
    \\
    |z_j(m') - \bar z_j(m')| \lbeq{VVbar4mp}
    &\leq \b \domV \chi_j(m') \gbar_j(m')^2 |\log\gbar_j(m')|
    .
  \end{align}
  By Proposition~\ref{prop:approximate-flow},
  $\bar V_j(m',g_0')$ is continuous in $(m',g_0')$, and thus in particular
  $\bar V_0(m',g_0')$ is uniformly bounded for $(m',g_0')$ in a
  bounded neighbourhood $I$ of $(m,g_0)$.
  With \refeq{VVbar2mp}--\refeq{VVbar4mp}, we see that
  $V_0(m',u_0')$ is therefore also uniformly bounded in $I$.
  Thus, for every sequence $(m',u_0') \to (m,u_0)$, $V_0(m',u_0')$ has a limit point.
  It suffices to show that the limit point is unique.
  To show this uniqueness, we fix an arbitrary limit point $V_0^*$
  and a sequence $(m', u_{0}') \to (m,u_0)$ such that $V_0(m', u_{0}') \to V_0^*$.
  Since $K_{0}' \to K_0$, we also set $K_0^*=K_0$.

  Then define $x_j^* = (K_j^*, V_j^*)$ by inductive application of $\Phi_j(m, \cdot)$
  starting from $x_0^* = (K_0^*,V_0^*)$, as long as $x_j^* \in D_j$.
  Since $(K_{0}',V_0(m',u_{0}')) \to (K_0^*,V_0^*)$, it follows by
  induction and the assumed continuity of $\psi_j,\rho_j$
  that $x_j(m',u_{0}') \to x_j^*$.
  By an analogous induction, using continuity of %$\bar K_0(m') = K_0(m')$,
  $\bar V_j$ and $\psi_j$, % of $\psi_j,\rho_j$,
  it follows that  $\bar K_j(m', u_{0}') \to \bar K_j(m,u_0)$.
  Since $\chi_j(m') \to \chi_j(m)$, we can now take the limit of
  \refeq{VVbar1mp}--\refeq{VVbar4mp} along the sequence $(m',u_{0}') \to (m,u_0)$
  and obtain
  \begin{align}
    \|K_j^* - \bar K_j(m)\|_{\Wcal_j} \lbeq{KKbarm}
    &\leq \b (\domK-\domK_*) \chi_j(m) \gbar_j(m)^3
    ,
    \\
    |g_j^* - \bar g_j(m)|
    &\leq \b \domV \gbar_j(m)^2 |\log\gbar_j(m)|
    ,
    \\
    |\mu_j^* - \bar \mu_j(m)|
    &\leq \b \domV \chi_j(m) \gbar_j(m)^2 |\log\gbar_j(m)|
    ,
    \\
    |z_j^* - \bar z_j(m)|
    &\leq \b \domV \chi_j(m) \gbar_j(m)^2 |\log\gbar_j(m)|
    .
  \end{align}
  The uniqueness assertion of Theorem~\ref{thm:flow} implies that $x_j^* = x_j(m,u_0)$,
  and we see that the above inductions can in fact be carried out indefinitely.
  We also conclude that $V_0^* = V_0(m,u_0)$, so there is a unique limit point of
  $V_0(m',u_0')$ as $(m',u_0') \to (m,u_0)$. This shows that $V_0$ is continuous
    at $(m,u_0)$.
  The continuity of $x_j$ now follows inductively from the continuity of the $\Phi_j$.
\end{proof}

\section{Quadratic flow}
\label{sec:approximate-flow}

In this section, we
prove that, for the quadratic approximation $\bar\varphi$,
there exists a unique solution $\bar V = (\bar V_j)_{j\in\N_0} =(\gbar_j,\zbar_j,\mubar_j)_{j\in\N_0}$
to the flow equation
\begin{equation} \lbeq{barflow}
  \bar V_{j+1} = \bar \varphi_j(\bar V_j)
  \quad \text{with fixed small $\gbar_0 >0$ and with
  $(\bar z_\infty, \bar \mu_\infty) = (0,0)$}
  .
\end{equation}
Due to the triangular nature of $\bar\varphi$,
we can obtain detailed information about the sequence $\bar V$.
In particular, we prove Proposition~\ref{prop:approximate-flow}.

\subsection{Flow of $\bar g$}

We start with the analysis of the sequence $\gbar$, which obeys the recursion
\begin{equation} \label{e:recursion-g2}
  \bar g_{j+1} = \bar g_j - \beta_j \bar g_j^2,
  \quad\quad \gbar_0 = g_0 > 0.
\end{equation}
The following lemma collects the information we need
about $\gbar$.

\begin{lemma} \label{lem:elementary-recursion}
Assume (A1). The following statements hold if $\gbar_0>0$
is sufficiently small, with all constants independent of $\jm$ and $\gbar_0$.
\begin{enumerate}[(i)]
\item
  For all $j$, $\gbar_j > 0$,
  \begin{equation} \lbeq{gbarmon}
    \bar g_j = O(\inf_{k\leq j}\bar g_k), \quad \text{and} \quad
    \bar g_j \bar g_{j+1}^{-1}
    = 1+O(\chi_j \bar g_j)
    = 1+O(\gbar_0).
  \end{equation}
  For all $j$ and $k$, $\gbar_j$ is non-increasing in $\beta_k$.

\item
  (a)
  For real $n \in [1,\infty)$ and $m \in [0,\infty)$,
  there exists $C_{n,m}>0$ such that
  for all $k \ge j \ge 0$,
  \begin{equation}
    \lbeq{chigbd-bis}
    \sum_{l=j}^k \chi_l \bar g_l^n |\log \bar g_l|^m \leq C_{n,m}
    \begin{cases}
      |\log \bar g_{k}|^{m+1} & n = 1
      \\
      \chi_j \bar g_j^{n-1} |\log \bar g_j|^m & n > 1
      .
    \end{cases}
  \end{equation}
  (b)
  For real $n\in [1,\infty)$, there exists $C_n>0$ such that for all $j \geq 0$,
  \begin{equation} \lbeq{chigbd2}
    \chi_j \gbar_j^{n} \leq
    C_n \left(\frac{\gbar_0}{1+\gbar_0 j}\right)^n.
  \end{equation}

\item
  (a) For $\gamma \ge 0$ and $j \geq 0$, there exist constants $c_j = 1+O(\chi_j\gbar_j)$
  (depending on $\gamma$) such that, for all $l \geq j$,
  \begin{equation}
    \lbeq{prodbd}
    \prod_{k=j}^l (1 - \gamma \beta_k \bar g_k)^{-1} =
    \left( \frac{\bar g_{j}}{\bar g_{l+1}} \right)^\gamma (c_j+O(\chi_l \gbar_l)).
  \end{equation}
  The constant $c_j$ is continuous in $g_0$ and if the $\beta_j$ depend continuously on an external
    parameter such that (A1) holds uniformly in that parameter, then $c_j$ is also continuous
    in the external parameter.
  \smallskip

  \noindent
  (b) For $\zeta_j \leq 0$ except for $c^{-1}$ values of $j \leq \jm$, $\zeta_j=O(\chi_j)$, and $j \le l$,
  (with a constant independent of $j$ and $l$),
  \begin{equation}
    \lbeq{zetaprod}
    \prod_{k=j}^l (1-\zeta_k\gbar_k)^{-1} \leq O(1) .
  \end{equation}

\item
  Suppose that $\gbar$ and $\g$ each satisfy \eqref{e:recursion-g2}.
  Let $\delta >0$.
  If $|\g_0-\gbar_0| \leq \delta \g_0$ then  $|\g_j-\gbar_j|\leq \delta \g_j(1+O(\gbar_0))$
  for all $j$.
\end{enumerate}
\end{lemma}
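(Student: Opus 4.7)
The approach is to analyse the difference $e_j := \g_j - \gbar_j$ directly. Subtracting the two instances of \eqref{e:recursion-g2} and factoring $\g_j^2-\gbar_j^2 = (\g_j+\gbar_j)e_j$ gives the linearised recursion
\begin{equation*}
  e_{j+1} = e_j\bigl(1 - \beta_j(\g_j+\gbar_j)\bigr).
\end{equation*}
Since $\gbar_0>0$ and the hypothesis $|\g_0-\gbar_0|\leq\delta\g_0$ force $\g_0>0$, part (i) of this lemma applies to both sequences once $\gbar_0$ is taken sufficiently small, yielding $\g_j,\gbar_j = O(\gbar_0)$ uniformly in $j$.

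It is cleaner to work with the relative error $r_j := e_j/\g_j$; the goal $|\g_j-\gbar_j|\leq\delta\g_j(1+O(\gbar_0))$ becomes $|r_j|\leq\delta(1+O(\gbar_0))$, starting from $|r_0|\leq\delta$. Dividing the $e$-recursion by $\g_{j+1} = \g_j(1-\beta_j\g_j)$ gives
\begin{equation*}
  r_{j+1}
  = r_j\,\frac{1-\beta_j(\g_j+\gbar_j)}{1-\beta_j\g_j}
  = r_j\left(1-\frac{\beta_j\gbar_j}{1-\beta_j\g_j}\right).
\end{equation*}
For indices where $\beta_j\geq 0$, smallness of $\gbar_0$ makes the fraction $\beta_j\gbar_j/(1-\beta_j\g_j)$ lie in $[0,O(\gbar_0)]$, so the factor has modulus at most $1$ and cannot increase $|r_j|$. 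Only the indices where $\beta_j<0$ can contribute a factor greater than $1$; each such factor is bounded by $1+2|\beta_j|\gbar_j$.

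What remains is to bound $\prod_{j:\,\beta_j<0}(1+2|\beta_j|\gbar_j)$ by $1+O(\gbar_0)$, and Assumption (A1) together with \eqref{e:chidef} is tailored for this. At most $c^{-1}$ indices $j\leq\jm$ fail to satisfy $\beta_j\geq c$, so only that many such indices below $\jm$ can have $\beta_j<0$, and each contributes at most $1+2\betamax\cdot O(\gbar_0) = 1+O(\gbar_0)$; their product is $1+O(\gbar_0)$. For $j>\jm$, the cut-off definition gives $|\beta_j|\leq\Omega^{-(j-\jm)}\betamax$, whence $\sum_{j>\jm}|\beta_j|\gbar_j = O(\gbar_0)$ by geometric summation, so the corresponding product is also $1+O(\gbar_0)$. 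Combining the two contributions yields $|r_j|\leq\delta(1+O(\gbar_0))$, which is the claim after multiplication by $\g_j$. The only delicate step is this counting of bad indices, which relies on (A1) and on the exponential decay of $\beta_j$ beyond $\jm$; everything else is routine factoring and the uniform smallness of $\g_j,\gbar_j$ furnished by part (i).
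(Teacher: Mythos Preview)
Your argument is correct and matches the paper's proof essentially line for line: the paper also derives the recursion for the relative error (their $\delta_j$ is your $|r_j|$), notes the factor is at most $1$ when $\beta_j\geq 0$, bounds the at most $c^{-1}$ exceptional indices below $\jm$ by $1+O(\gbar_0)$ each, and handles $j>\jm$ via the geometric decay $|\beta_j|\leq\chi_j\betamax$.
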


\begin{proof}
(i) By \eqref{e:recursion-g2},
\begin{equation}
\label{e:gbarjj}
  \gbar_{j+1} = \gbar_{j}(1 - \beta_{j}\gbar_{j})
  .
\end{equation}
Since $\beta_j =O(\chi_j)$,
by \refeq{gbarjj} the second statement of  \eqref{e:gbarmon}
is a consequence of the first, so
it suffices to verify the first statement of \eqref{e:gbarmon}.
Assume inductively that $\gbar_j>0$ and that $\bar g_j = O(\inf_{k\leq j}\bar g_k)$.
It is then immediate from \eqref{e:gbarjj} that $\gbar_{j+1} > 0$ if $\gbar_0$ is
sufficiently small depending on $\|\beta\|_\infty$,
and that $\gbar_{j+1} \leq \gbar_j$ if $\beta_j \geq 0$.
By (A1), there are at most $c^{-1}$ values of $j \leq \jm$ for which $\beta_j < 0$.
Therefore, by choosing $\gbar_0$ sufficiently small depending on $\betamax$ and $c$,
it follows that $\gbar_j \leq O(\inf_{k \leq j}\gbar_k)$
for all $j \leq \jm$ with a constant that is independent of $\jm$.

To advance the inductive hypothesis  for $j > \jm$,
we use $1-t \leq e^{-t}$ and
$\sum_{l=\jm}^\infty |\beta_l| \le \sum_{n=1}^\infty \Omega^{-n} =O(1)$ to obtain,
for $j \geq k \geq \jm$,
\begin{equation} \lbeq{gjgkmon}
  \gbar_{j}
  \leq \gbar_{k} \exp\left[-\sum_{l=k}^{j-1} \beta_l \gbar_l\right]
  \leq \gbar_{k} \exp\left[C \gbar_{k} \sum_{l=k}^{j-1} |\beta_l| \right]
  \leq O(\gbar_k).
\end{equation}
This shows that $\gbar_j = O(\inf_{\jm \leq k\leq j}\gbar_{k})$.
However, by the inductive hypothesis,
$\gbar_{\jm} = O(\inf_{k\leq \jm}\gbar_k)$ for $j\leq \jm$, % by the first paragraph,
and hence for $j > \jm$ we do have $\gbar_j = O(\inf_{k \leq j}\gbar_k)$
as claimed.
This completes the verification of the first bound of \eqref{e:gbarmon} and thus,
as already noted, also of the second.

The monotonicity of $\gbar_j$ in $\beta_k$ can be proved as follows.
Since $\gbar_j$ does not depend on $\beta_k$ if $k \geq j$ by definition,
we may assume that $k<j$. Moreover, by replacing $j$ by $j+k$, we may assume that $k=0$.
Let $\gbar_j' = \partial\gbar_j/\partial\beta_0$. Since $\gbar_0'=0$,
\begin{equation}
  \gbar_{1}' = -\gbar_0^2 < 0.
\end{equation}
Assuming that $\gbar_j' < 0$ by induction, it follows that for $j\geq 1$,
\begin{equation}
  \gbar_{j+1}' = \gbar_j'(1-2\beta_j\gbar_j)
  = \gbar_j'(1+O(g_0)) < 0,
\end{equation}
and the proof of monotonicity is complete.

\smallskip \noindent (ii-a)
We first show that if $\psi: \R_+ \to \R$ is absolutely continuous, then
\begin{equation}
\lbeq{gbarsumbis}
  \sum_{l=j}^{k} \beta_l \psi(\bar g_l) \bar g_l^2
  = \int_{\bar g_{k+1}}^{\bar g_{j}} \psi(t) \; dt
  + O\left(\int_{\bar g_{k+1}}^{\bar g_{j}} t^2 |\psi'(t)| \; dt
  \right)
  .
\end{equation}
To prove \eqref{e:gbarsumbis}, we apply \eqref{e:recursion-g2} to obtain
\begin{equation}
\lbeq{intsum}
  \sum_{l=j}^{k} \beta_{l} \psi(\gbar_{l}) \gbar_l^2
  =
  \sum_{l=j}^{k} \psi(\gbar_{l}) (\gbar_{l} - \gbar_{l+1})
  =
  \sum_{l=j}^{k}
  \int_{\gbar_{l+1}}^{\gbar_{l}} \psi(\gbar_{l}) \,dt
  .
\end{equation}
The integral can be written as
\begin{equation}
\lbeq{psiint}
  \int_{\gbar_{l+1}}^{\gbar_{l}} \psi(\gbar_{l}) \,dt
  =
  \int_{\gbar_{l+1}}^{\gbar_{l}} \psi(t) \,dt
  +
  \int_{\gbar_{l+1}}^{\gbar_{l}} \int_t^{\gbar_l} \psi'(s) \, ds \, dt
  .
\end{equation}
The first term on the right-hand side of
\eqref{e:gbarsumbis} is then the sum over $l$ of the
first term on the right-hand side of \refeq{psiint}, so
it remains to estimate the double integral.
By Fubini's theorem,
\begin{align}
\lbeq{Fub}
  \int_{\gbar_{l+1}}^{\gbar_{l}} \int_t^{\gbar_l} \psi'(s) \, ds \, dt
  &= \int_{\gbar_{l+1}}^{\gbar_{l}} \int_{\gbar_{l+1}}^s \psi'(s) \, dt \, ds
%  \nnb  &
  = \int_{\gbar_{l+1}}^{\gbar_{l}} (s-\gbar_{l+1}) \psi'(s) \, ds
  .
\end{align}
By
\eqref{e:recursion-g2} and \eqref{e:gbarmon},
for $s$ in the domain of integration
we have
\begin{equation}
  |s-\gbar_{l+1}|
  \leq
  |\gbar_l - \gbar_{l+1}|
  =
  |\beta_l| \bar g_l^2
  \leq
  (1+O(\bar g_0)) |\beta_l| \bar g_{l+1}^2
  \leq O(s^2)
  .
\end{equation}
This permits us to estimate \refeq{Fub} and conclude \refeq{gbarsumbis}.

Direct evaluation of the integrals in \eqref{e:gbarsumbis} with $\psi(t) = t^{n-2}|\log t|^m$
gives
\begin{equation}
  \lbeq{betagbd}
  \sum_{l=j}^k \beta_l \bar g_l^n |\log \bar g_l|^m
  \leq C_{n,m}
  \begin{cases}
    |\log \bar g_{k}|^{m+1}  & n = 1
    \\
    \bar g_j^{n-1} |\log \bar g_j|^m  & n > 1
    .
  \end{cases}
\end{equation}
To deduce \eqref{e:chigbd-bis}, we only consider the case $n>1$, as the case $n=1$ is similar.
Suppose first that $j \le \jm$.
Assumption (A1) implies that
\begin{equation}
  1 \leq \frac{\beta_l}{c} + \left(1+\frac{|\beta_l|}{c}\right)1_{\beta_l<c}
  \leq O(\beta_l) + O(1_{\beta_l<c}),
\end{equation}%
and therefore
\begin{align}
  \sum_{l=j}^k \chi_l \bar g_l^n |\log \bar g_l|^m
  & \leq
  % c^{-1}
  \sum_{l=j}^{\jm} O(\beta_l) \bar g_l^n |\log \bar g_l|^m
  +
  \sum_{l=j}^{\jm} O(\1_{\beta_l < c}) \bar g_l^n |\log \bar g_l|^m
  \nnb
  & \quad
  +
  \sum_{l=\jm+1}^{k} \Omega^{-(l-\jm)_+} \bar g_l^n |\log \bar g_l|^m
  .
\end{align}
By \refeq{betagbd}, the first term is bounded by
$O(\bar g_j^{n-1} |\log \bar g_j|^m)$.  The second term obeys the same bound,
by (A1) and \eqref{e:gbarmon}, as does the last term
(which is only present when $\jm<\infty$) due to
the exponential decay.
This proves \refeq{chigbd-bis}
for the case $j \le \jm$.
On the other hand,
if $j>\jm$, then again using the exponential decay of $\chi_l$ and
\eqref{e:gbarmon}, we obtain
\begin{equation}
  \sum_{l=j}^k \chi_l \bar g_l^n |\log \bar g_l|^m
  \leq
  C\chi_j \bar g_{j}^n |\log \bar g_{j}|^m
  \leq
  C\gbar_0\chi_j \bar g_{j}^{n-1} |\log \bar g_{j}|^m
  .
\end{equation}
This completes the proof of \eqref{e:chigbd-bis} for the case $n>1$.

\smallskip \noindent (ii-b)
To prove \refeq{chigbd2}, let $c>0$ be as in Assumption~(A1) and set
$\hat g_{j+1} = \hat g_j - c \hat g_j^2$ with $\hat g_0 = \gbar_0$.
The sequence $(\hat g_j)$
satisfies the bound \refeq{chigbd2}, since application of \refeq{gbarsumbis}
with $\psi(t) = t^{-2}$ gives $(k+1)c=\hat g_{k+1}^{-1}-\hat g_0^{-1}
+ O(\log (\hat g_0/\hat g_{k+1})$ and hence $\hat g_j \sim \hat g_0/(1+c \hat g_0 j)$.
It therefore suffices to prove that $\chi_j \gbar_j^n \le O(\hat g_j^n)$.

We first show that it suffices to prove that
\begin{equation}
  \lbeq{smj}
  \chi_j\gbar_j = \gbar_j \le O(\hat g_j) \quad \text{for all $j \le \jm$}
\end{equation}
(the first inequality holds since $\chi_j=1$ for $j\le \jm$ by definition).
To see this, we note that
for $\gbar_0>0$ sufficiently small and for all $j$,
\begin{equation}
  \Omega^{-1} \leq (1-c\gbar_0)^n \leq (1-c\hat g_j)^n = \left(\frac{\hat g_{j+1}}{\hat g_j}\right)^n.
\end{equation}
For $j>\jm$, by \refeq{smj} and the fact that
$\gbar_j = O(\gbar_{j_\Omega})$ by \refeq{gbarmon},
this implies that
\begin{equation}
  \chi_j \gbar_j^n
  \leq O(\Omega^{-(j-j_\Omega)}\gbar_{j_\Omega}^n)
  \leq O(\Omega^{-(j-j_\Omega)}\hat g_{j_\Omega}^n)
  \leq O \left( \prod_{l=j_{\Omega}}^{j-1} \frac{\hat g_{l+1}}{\hat g_l}  \right)^n
  \hat g_{j_\Omega}^n
  = O(\hat g_j^n)
  .
\end{equation}
For $j\le \jm$, since $\chi_j=1$, it suffices to prove \refeq{chigbd2} with $n=1$, i.e.,
\refeq{smj}.

Let $\tilde\beta_j = \min \{ c, \beta_j \}$, and
define $\tilde g_j$ by the recursion $\tilde g_{j+1} = \tilde g_j - \tilde \beta_j \tilde g_j^2$ with $\tilde g_0 = \gbar_0$.
By the monotonicity in $\beta$ asserted in part (i),
\begin{equation} \lbeq{gtildebds}
  \gbar_j \leq \tilde g_j, \qquad \hat g_j \leq \tilde g_j.
\end{equation}
Denote by $0\leq j_1 < j_2 < \dots < j_m$ the
sequence of $j \le \jm$ such that $\beta_j < c$.  By (A1), the number of
elements in this sequence is indeed finite.
By the first inequality of \refeq{gtildebds} and the definition of $\tilde g_j$,
it follows that, for $j \leq \jm$,
\begin{align}
  \gbar_j \leq \tilde g_j
  = \tilde g_0
  \prod_{l=0}^{j-1} (1-\tilde\beta_l \tilde g_l)
  = \tilde g_0
  \prod_{l=0}^{j-1} (1-c \tilde g_l)
  \prod_{k \le m: j_m \leq j-1} \frac{1-\beta_{j_k} \tilde g_{j_k}}{1-c\tilde g_{j_k}}
  .
\end{align}
Thus, with $\tilde g_0 = \hat g_0$, the second inequality of \refeq{gtildebds},
and the definition of $\hat g_j$,
\begin{equation}
  \gbar_j \leq \hat g_0
  \prod_{l=0}^{j-1} (1-c \hat g_l)
  \prod_{k \le m: j_m \leq j-1} \frac{1-\beta_{j_k} \tilde g_{j_k}}{1-c\tilde g_{j_k}}
  = \hat g_j
  \prod_{k \le m: j_m \leq j-1} \frac{1-\beta_{j_k} \tilde g_{j_k}}{1-c\tilde g_{j_k}}
  .
\end{equation}
The product on the last line is a product
of at most $m$ factors which are each $1+O(g_0)$, and can thus be bounded by $1+O(g_0)$.
In particular,
\begin{equation}
  \lbeq{chigbar1}
  \gbar_{j} \leq (1+O(g_0)) \hat g_{j} \leq O(\hat g_j)
  \quad
  \text{for $j \leq \jm$}
  ,
\end{equation}
and the proof of \refeq{chigbd2} is complete.

\smallskip \noindent (iii-a)
By Taylor's theorem and \eqref{e:recursion-g2}, there
exists $r_k = O(\beta_k\bar g_k)^2$ such that
\begin{equation}
\lbeq{Taygam}
  (1 - \gamma \beta_k \bar g_k)^{-1}
  =
  (1 - \beta_k \bar g_k)^{-\gamma} (1+ r_k)
  =
  \left(\frac{\bar g_{k}}{\bar g_{k+1}}\right)^{\gamma} (1+ r_k).
\end{equation}
For $l \ge j$, let
\begin{equation} \label{e:cjlprod}
  c_{j,l} = \prod_{k=j}^l ( 1 + r_{k} )
  = \exp\left(\sum_{k=j}^l\log( 1 + r_{k} )\right).
\end{equation}
Since $\log(1+r_k) = O(\chi_k\gbar_k^2)$, it follows from
\eqref{e:chigbd-bis} that the sum on the right-hand side
of \refeq{cjlprod} is bounded by $O(\chi_j\gbar_j)$ uniformly in $l$.
We can thus define
\begin{equation}
  c_{j} = \exp\left(\sum_{k=j}^\infty\log( 1 + r_{k} )\right) = 1+O(\chi_j\gbar_j).
\end{equation}
The bound on the sum also shows
\begin{equation}
  c_j - c_{j,l}
  = c_j \left(1- \exp\left(-\sum_{k=l}^\infty\log( 1 + r_{k} )\right)\right)
  = c_j (1+O(\chi_l\gbar_l)).
\end{equation}
Moreover, these estimates hold uniformly in a neighborhood of $g_0$ and in the external parameter,
by assumption. Thus the dominated convergence theorem implies continuity of $c_j$,
both in $g_0$ and in the external parameter, and
the proof is complete.

\smallskip \noindent (iii-b)
Since $\zeta_j \leq 0$ for all but $c^{-1}$ values of $j \leq \jm$,
by \eqref{e:gbarmon} with $\gbar_0$ sufficiently small,
 $\prod_{k=j}^l (1-\zeta_k\bar g_k)^{-1} \leq O(1)$ for $l \leq \jm$,
with a constant independent of $\jm$.
For $j \geq \jm$, we use
$1/(1-x) \leq 2e^{x}$ for $x\in[-\half,\half]$ to obtain
\begin{equation}
  \prod_{k=j}^l (1-\zeta_k\bar g_k)^{-1}
  \leq
  2 \exp\left[\sum_{k=j}^l \zeta_k\bar g_k \right]
  \leq
  2 \exp\left[C\gbar_j \sum_{k=\jm}^\infty \chi_k \right]
  \leq
  O(1).
\end{equation}
The bounds for $l \leq \jm$ and $j \geq \jm$ together imply \refeq{zetaprod}.

\smallskip \noindent (iv)
If $|\g_{j}-\gbar_{j}| \leq \delta_j \g_j$ then by \eqref{e:recursion-g2},
\begin{equation}
  |\g_{j+1}-\gbar_{j+1}|
  =
  |\g_j - \gbar_j|(1-\beta_j(\g_j+\gbar_j))
  \leq
  \delta_{j+1} \g_{j+1}
\end{equation}
with
\begin{equation}
  \delta_{j+1}
  = \delta_j \frac{1-\beta_j(\g_j+\gbar_j)}{1-\beta_j\g_j}
  = \delta_j \left(1-\frac{\beta_j\gbar_j}{1-\beta_j\g_j}\right)
  .
\end{equation}
In particular, if $\beta_j \geq 0$, then $\delta_{j+1}\leq\delta_j$.
By (A1), there are at most $c^{-1}$ values of $j \leq \jm$ for which $\beta_j < 0$,
and hence
$\delta_j \leq \delta (1+O(\gbar_0))$ for $j \leq \jm$.
The desired estimate therefore holds for $j \le \jm$.
For $j \ge l > \jm$,
as in \eqref{e:gjgkmon} we have
\begin{equation}
  \prod_{k=l}^j (1+O(\beta_k\gbar_k))
  \leq \exp\left[O(\gbar_l) \sum_{k=l}^j\chi_k \right] \leq 1+O(\gbar_0),
\end{equation}
and thus the claim remains true also for $j > \jm$.
\end{proof}

\subsection{Flow of $\bar z$ and $\bar \mu$}

We now establish the existence of unique solutions to
the $\bar z$ and $\bar \mu$ recursions  with boundary
conditions $\bar z_\infty = \bar\mu_\infty =0$, and obtain estimates
on these solutions.

\begin{lemma}
\label{lem:greekbds}
  Assume (A1--A2).  If $\gbar_0$ is sufficiently small then there
  exists a unique solution to \refeq{barflow} obeying $\zbar_\infty=\mubar_\infty=0$.
  This solution
  obeys $\zbar_j = O(\chi_j \bar g_j)$ and
  $\bar \mu_j = O(\chi_j \bar g_j)$.
  Furthermore, if the maps $\bar\varphi_j$ depend continuously on $m \in \Mext$
  and (A1--A2) hold with uniform constants, then
  $\gbar_j$, $\zbar_j$ and $\mubar_j$ are continuous in $\Mext$.
\end{lemma}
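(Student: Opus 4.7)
The plan is to exploit the triangular structure of $\bar\varphi$. With $\bar g$ already controlled by Lemma~\ref{lem:elementary-recursion}, the recursion for $\bar z$ is affine-linear in $\bar z$ with forcing depending only on $\bar g$, and subsequently the recursion for $\bar\mu$ is affine-linear in $\bar\mu$ with forcing involving $\bar g$, $\bar z$, and at most linearly $\bar\mu$ itself. In each case I will solve the recursion backward from infinity, using the prescribed final condition $0$ to pin down the otherwise non-decaying homogeneous part.

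For $\bar z$, reading off \eqref{e:Mmatrix}--\eqref{e:qdef} gives
\[
    \bar z_{j+1} = (1-\zeta_j\bar g_j)\bar z_j - \theta_j\bar g_j^2,
\]
so $\bar z_j = (1-\zeta_j\bar g_j)^{-1}(\bar z_{j+1} + \theta_j\bar g_j^2)$. Iterating from level $N$ down to $j$ yields
\[
    \bar z_j = \Bigl(\prod_{k=j}^{N-1}(1-\zeta_k\bar g_k)^{-1}\Bigr)\bar z_N + \sum_{l=j}^{N-1}\theta_l\bar g_l^2\prod_{k=j}^{l}(1-\zeta_k\bar g_k)^{-1}.
\]
By Lemma~\ref{lem:elementary-recursion}(iii-b) the products are $O(1)$ uniformly in $j,l$; combined with $|\theta_l|=O(\chi_l)$ from (A2) and \eqref{e:chigbd-bis} with $n=2,\,m=0$, the sum converges absolutely to $O(\chi_j\bar g_j)$. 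Sending $N\to\infty$ with the hypothesis $\bar z_N\to 0$ then defines $\bar z_j$ as that convergent series, with the asserted bound. For uniqueness, any two such solutions differ by a sequence $\tilde z_j = \tilde z_0\prod_{k=0}^{j-1}(1-\zeta_k\bar g_k)$ solving the homogeneous recursion, and this product is bounded below away from $0$ (its reciprocal is $O(1)$ by Lemma~\ref{lem:elementary-recursion}(iii-b)), so $\tilde z_j\to 0$ forces $\tilde z_0=0$.

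For $\bar\mu$, decompose $F_j := \eta_j\bar g_j + \gamma_j\bar z_j - \bar V_j^T q_j^\mu\bar V_j = a_j + b_j\bar\mu_j$, where $a_j$ collects the $\bar\mu$-independent terms and $b_j = -(\upsilon_j^{g\mu}\bar g_j + \upsilon_j^{z\mu}\bar z_j) = O(\chi_j\bar g_j)$ (since the $(3,3)$ entry of $q_j^\mu$ vanishes). Backward iteration of $\bar\mu_{j+1} = \lambda_j\bar\mu_j + F_j$, together with $\lambda_k\geq\lambda>1$ from (A2) and $\bar\mu_N\to 0$, yields the fixed-point equation
\[
    \bar\mu_j = -\sum_{l=j}^{\infty}\Bigl(\prod_{k=j}^{l}\lambda_k^{-1}\Bigr)(a_l + b_l\bar\mu_l).
\]
I would set this up as a Banach contraction on the ball $\{\bar\mu:|\bar\mu_j|\leq C\chi_j\bar g_j\}$ in the norm $\|\bar\mu\|_* = \sup_j(\chi_j\bar g_j)^{-1}|\bar\mu_j|$. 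The geometric decay of $\prod\lambda_k^{-1}$, the monotonicity $\chi_l\bar g_l=O(\chi_j\bar g_j)$ for $l\geq j$ from Lemma~\ref{lem:elementary-recursion}(i), and $|b_l|=O(\chi_l\bar g_l)$ give contraction constant $O(\bar g_0)$, while $|a_l|=O(\chi_l\bar g_l)$ (using (A2) and the $\bar z$-bound) yields the size $O(1)$ of the image of $0$. The unique fixed point thus satisfies $|\bar\mu_j|=O(\chi_j\bar g_j)$, and uniqueness within all sequences with $\bar\mu_\infty=0$ is immediate because the homogeneous solution $\tilde\mu_0\prod_{k<j}\lambda_k$ grows at least geometrically and so vanishes at infinity only if $\tilde\mu_0=0$.

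Continuity in $m\in\Mext$ follows by dominated convergence: Lemma~\ref{lem:elementary-recursion}(iii-a) gives continuity of $\bar g_j$ and of the product factors, so each summand in the series for $\bar z_j$ and in the fixed-point equation for $\bar\mu_j$ is continuous in $m$, while the summability estimates hold uniformly in $m$ by uniformity of the constants in (A1--A2); for $\bar\mu_j$, the contraction is uniform in $m$, so its fixed point is continuous in $m$. The main analytical obstacle is the self-coupling of $\bar\mu$ inside $F_j$, which is precisely what makes the contraction-mapping device natural; the other essential inputs are the two-sided control of $\prod(1-\zeta_k\bar g_k)^{\pm 1}$ from Lemma~\ref{lem:elementary-recursion}(iii-b) and the summability estimate \eqref{e:chigbd-bis}.
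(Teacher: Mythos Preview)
Your argument for $\bar z$ is essentially the paper's: backward iteration, the product bound \eqref{e:zetaprod}, and the summability \eqref{e:chigbd-bis} combine to give the explicit series and the estimate $\bar z_j=O(\chi_j\bar g_j)$. Your uniqueness via the non-decay of the homogeneous solution is a nice touch the paper leaves implicit.

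For $\bar\mu$ you take a genuinely different route. The paper observes that, since the $(3,3)$ entry of $q_j^\mu$ vanishes, the recursion is \emph{exactly} affine in $\bar\mu$; it absorbs your $b_j\bar\mu_j$ into the coefficient, writing $\bar\mu_{j+1}=(\lambda_j-\tau_j)\bar\mu_j+\sigma_j$ with $\tau_j=-b_j$ and $\sigma_j=a_j$, and then solves by a single backward iteration, obtaining the closed formula
\[
  \bar\mu_j=-\sum_{l\ge j}\Bigl(\prod_{k=j}^l(\lambda_k-\tau_k)^{-1}\Bigr)\sigma_l
\]
with $(\lambda_k-\tau_k)^{-1}\le\alpha<1$ for small $\bar g_0$. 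No fixed-point argument is needed, and uniqueness and the bound $O(\chi_j\bar g_j)$ fall out immediately. Your contraction-mapping approach is correct and yields the same conclusions; it is slightly less direct here but would survive a genuinely nonlinear (small) $\mu$-coupling, which the paper's trick would not. One small correction: in your uniqueness argument the homogeneous solution is $\tilde\mu_0\prod_{k<j}(\lambda_k+b_k)$, not $\tilde\mu_0\prod_{k<j}\lambda_k$; since $b_k=O(\bar g_0)$ the growth persists and the conclusion is unaffected.

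Your continuity argument matches the paper's in spirit (uniform limit of continuous partial sums). Note that $\bar g_j$ is continuous simply because it is defined by a finite recursion with continuous coefficients; Lemma~\ref{lem:elementary-recursion}(iii-a) concerns a different product and is not the relevant citation, though this does not affect the argument.
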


\begin{proof}
By \refeq{Mmatrix}, $\bar z_{j+1}=\bar z_j - \zeta_j \bar g_j \bar z_j - \theta_j \bar g_j^2$, so that
\begin{equation}
  \bar z_j
  =
  \prod_{k=j}^n (1-\zeta_k\bar g_k)^{-1} \bar z_{n+1} + \sum_{l=j}^n \prod_{k=j}^l (1-\zeta_k\bar g_k)^{-1}  \theta_l \bar g_l^2
  .
\end{equation}
In view of \refeq{zetaprod},
whose assumptions are satisfied by (A2),
the unique solution to the recursion for $\zbar$ which
obeys the boundary condition $\bar z_\infty =0$  is
\begin{equation} \lbeq{zbar}
  \bar z_j
  =
  \sum_{l=j}^\infty \prod_{k=j}^l (1-\zeta_k\bar g_k)^{-1} \theta_l \bar g_l^2
  ,
\end{equation}
and by (A2), \refeq{chigbd-bis}, and \refeq{zetaprod},
\begin{equation} \lbeq{zbarbd}
  |\bar z_j|
  \le \sum_{l=j}^\infty O(\chi_l) \bar g_l^2
  \leq O(\chi_j\bar g_j)
  .
\end{equation}
Since $\gbar_j$ is defined by a
finite recursion, its continuity in $m\in\Mext$ follows from the assumed
continuity of each $\beta_k$ in $m$.
To verify continuity of $\zbar_j$ in $m$,
let $\zbar_{j,n} = \sum_{l=j}^n \prod_{k=j}^l (1-\zeta_k\bar g_k)^{-1} \theta_l \bar g_l^2$.
Since $\gbar_j$ is continuous in $\Mext$ for any $j \geq 0$, $\zbar_{j,n}$ is also continuous, for any $j \leq n$.
By \refeq{zetaprod} and \refeq{chigbd-bis}--\refeq{chigbd2},
$|\zbar_j-\zbar_{j,n}| \leq O(\chi_n \gbar_n) \to 0$ as $n \to \infty$, uniformly in $m$,
and thus, as a uniform limit of continuous functions,  $\zbar_j$ is continuous in $m\in\Mext$.

For $\bar \mu$, we first define
\begin{equation}
  \sigma_j = \eta_j  \gbar_j + \gamma_j \zbar_j
  - \upsilon_j^{gg} \gbar_j^2 - \upsilon_j^{gz} \gbar_j \zbar_j - \upsilon_j^{zz}\zbar_j^2,
  \quad
  \tau_j  = \upsilon_j^{g\mu}\gbar_j + \upsilon_j^{z\mu}\zbar_j ,
\lbeq{etatau}
\end{equation}
so that the recursion for $\mubar$ can be written as
\begin{equation}
    \mubar_{j+1} =(\lambda_j - \tau_j)\mubar_j + \sigma_j.
\end{equation}
Alternatively,
\begin{equation}
\lbeq{mubarrec}
    \mubar_{j} =(\lambda_j - \tau_j)^{-1}(\mubar_{j+1}-\sigma_j).
\end{equation}
Given $\alpha \in (\lambda^{-1},1)$,
we can choose $\gbar_0$ sufficiently small that
\begin{equation}
\lbeq{rhodef}
    \tfrac 12 \lambda^{-1} \le (\lambda_j - \tau_j)^{-1} \le \alpha.
\end{equation}
The limit of repeated iteration of \refeq{mubarrec} gives
\begin{equation}
  \lbeq{mubar}
  \bar \mu_j
  =
  - \sum_{l=j}^\infty
  \left( \prod_{k=j}^l (\lambda_k - \tau_k)^{-1} \right) \sigma_l
\end{equation}
as the unique solution which obeys the boundary condition $\mu_\infty =0$.
Geometric convergence of the sum is guaranteed by \refeq{rhodef},
together with the fact that $\sigma_j \le O(\chi_j\gbar_j) \le O(1)$.
To estimate \refeq{mubar}, we use
\begin{equation}
  |\bar \mu_j|
  \le
  \sum_{l=j}^\infty
  \alpha^{l-j+1}
  O(\chi_l \bar g_l)
  .
\end{equation}
Since $\alpha <1$, the first bound of \eqref{e:gbarmon}
and monotonicity of $\chi$ imply that
\begin{equation} \lbeq{mubar-estimate}
  |\bar \mu_j|
  \le
  O(\chi_j \gbar_j)
  .
\end{equation}
The proof of continuity of $\mubar_j$ in $\Mext$ is analogous to that for $\zbar_j$.
This completes the proof.
\end{proof}

\subsection{Differentiation of quadratic flow}

The following lemma gives estimates on the derivatives of the components
of $\bar V_j$ with respect to the initial condition
$\gbar_0$.
We write $f'$ for the derivative of $f$ with respect to $g_0=\bar g_0$.
These estimates are an ingredient in the proof of Theorem~\ref{thm:flow}(ii).

\begin{lemma}
\label{lem:barflowderiv}
For each $j \ge 0$, $\bar V_j=(\gbar_j$, $\zbar_j$,  $\mubar_j)$
is twice differentiable with respect to the initial condition $\gbar_0 > 0$,
and the derivatives obey
\begin{alignat}{6} \lbeq{xbar-derivatives}
  \gbar_{j}'
  &=
  O\left( \frac{\bar g_{j}^2}{\bar g_{0}^2} \right)
  ,
  &\qquad
  \zbar_j'
  &=
  O\left( \chi_j \frac{\bar g_{j}^2}{\bar g_{0}^2} \right)
  ,
  &\qquad
  \mubar_j'
  &=
  O\left( \chi_j \frac{\bar g_{j}^2}{\bar g_{0}^2} \right)
  ,
  \\
  \lbeq{xbar-derivatives2}
  \gbar_{j}''
  &=
  O\left( \frac{\bar g_{j}^2}{\bar g_{0}^3} \right)
  ,
  &\qquad
  \zbar_j''
  &=
  O\left( \chi_j \frac{\bar g_{j}^2}{\bar g_{0}^3} \right)
  &,
  \qquad
  \mubar_j''
  &=
  O\left( \chi_j \frac{\bar g_{j}^2}{\bar g_{0}^3} \right)
  .
\end{alignat}
\end{lemma}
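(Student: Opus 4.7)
My plan is to differentiate each of the recursions for $\gbar_j$, $\zbar_j$, $\mubar_j$ with respect to $g_0$, observe that the derivatives satisfy linear inhomogeneous recursions whose homogeneous parts coincide with those of the original recursions, and then solve these new recursions by the same product/sum techniques already used for $\bar V$ in Lemma~\ref{lem:greekbds}. The required summation estimates are supplied by Lemma~\ref{lem:elementary-recursion}(ii,iii) together with the geometric bounds \refeq{zetaprod} and \refeq{rhodef}.

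For the first derivative of $\gbar$, differentiation of \refeq{recursion-g2} gives $\gbar_{j+1}'=\gbar_j'(1-2\beta_j\gbar_j)$ with $\gbar_0'=1$, hence $\gbar_j'=\prod_{k=0}^{j-1}(1-2\beta_k\gbar_k)$. Lemma~\ref{lem:elementary-recursion}(iii-a) applied with $\gamma=2$ shows this product equals $(\gbar_j/\gbar_0)^2$ up to a bounded factor, yielding the first estimate of \refeq{xbar-derivatives}. Differentiating the $\zbar$ recursion and using the boundary condition $\zbar_\infty'=0$ (obtained by term-by-term differentiation of \refeq{zbar}, justified by dominated convergence with the summable majorants from Lemma~\ref{lem:elementary-recursion}(ii-a)) produces
\begin{equation*}
  \zbar_j'=\sum_{l=j}^\infty \prod_{k=j}^l(1-\zeta_k\gbar_k)^{-1}\bigl(\zeta_l\gbar_l'\zbar_l+2\theta_l\gbar_l\gbar_l'\bigr).
\end{equation*}
Each summand is bounded by $O(\chi_l\gbar_l^3/\gbar_0^2)$ via (A2), $\zbar_l=O(\chi_l\gbar_l)$, and the bound on $\gbar_l'$; together with \refeq{zetaprod} and \refeq{chigbd-bis} at $n=3$ this gives $\zbar_j'=O(\chi_j\gbar_j^2/\gbar_0^2)$. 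The bound on $\mubar_j'$ is entirely analogous, starting from \refeq{mubar} and using the geometric contraction \refeq{rhodef} in place of \refeq{zetaprod}.

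For the second derivatives I differentiate the above recursions once more. For $\gbar$, this gives $\gbar_{j+1}''=\gbar_j''(1-2\beta_j\gbar_j)-2\beta_j(\gbar_j')^2$ with $\gbar_0''=0$, whose forward solution is $\gbar_j''=-2\gbar_j'\sum_{l=0}^{j-1}\beta_l(\gbar_l')^2/\gbar_{l+1}'$. Since $\gbar_l'/\gbar_{l+1}'=1+O(g_0)$, the sum reduces to $O(\gbar_0^{-2})\sum_l\beta_l\gbar_l^2=O(\gbar_0^{-1})$ by \refeq{betagbd}, and multiplying by $\gbar_j'=O(\gbar_j^2/\gbar_0^2)$ produces $\gbar_j''=O(\gbar_j^2/\gbar_0^3)$. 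For $\zbar''$ and $\mubar''$ the analogous differentiation of the first-derivative recursions yields linear inhomogeneous recursions whose forcing, once the already-derived first-derivative bounds and the bound on $\gbar''$ are inserted, has dominant term of size $O(\chi_l\gbar_l^3/\gbar_0^3)$ for $\zbar''$ (from $\theta_l\gbar_l\gbar_l''$) and $O(\chi_l\gbar_l^2/\gbar_0^3)$ for $\mubar''$ (from $\eta_l\gbar_l''$). Summing via \refeq{zetaprod}, \refeq{rhodef}, and \refeq{chigbd-bis} at $n=3$ delivers the required $O(\chi_j\gbar_j^2/\gbar_0^3)$ bounds.

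The principal technical obstacle is bookkeeping: at each level of differentiation many terms are produced, and one must identify the dominant summand while tracking the $\gbar_0$-powers correctly. The interchange of $d/dg_0$ with the series defining $\zbar_j$ and $\mubar_j$ (and with their once-differentiated counterparts) is routine, resting on summable majorants that are uniform on compact subsets of $g_0\in(0,g_*]$ and are provided by Lemma~\ref{lem:elementary-recursion}(ii).
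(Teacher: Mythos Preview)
Your proposal is correct and follows essentially the same approach as the paper. The only cosmetic difference is that for $\zbar$ and $\mubar$ you differentiate the recursions and then iterate the resulting linear inhomogeneous recursions, whereas the paper differentiates the closed-form sums \refeq{zbar} and \refeq{mubar} directly (splitting off a separate estimate for $\sigma_{j,l}'=\partial_{g_0}\prod_{k=j}^l(1-\zeta_k\gbar_k)^{-1}$); both routes invoke the same summation bounds from Lemma~\ref{lem:elementary-recursion}(ii,iii) and the contraction \refeq{rhodef}, and arrive at the same estimates.
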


\begin{proof}
Differentiation of \refeq{gbar} gives
\begin{equation}
  \gbar_{j+1}' = \gbar_j'(1 - 2\beta_j \gbar_j)
  ,
\end{equation}
from which we conclude by iteration and $\gbar_0'=1$ that for $j \ge 1$,
\begin{equation}
  \gbar_{j}' = \prod_{l=0}^{j-1}(1 - 2\beta_l \gbar_l).
\end{equation}
Therefore, by \refeq{prodbd},
\begin{equation} \lbeq{gbarprime}
  \gbar_{j}'
  =
  \left( \frac{\bar g_{j}}{\bar g_{0}} \right)^2 (1+O(\bar g_0)).
\end{equation}
For the second derivative, we use $\gbar_0''=0$ and
$\gbar_{j+1}'' = \gbar_j''(1-2\beta_j\gbar_j)-2\beta_j\gbar_j'^2$ to
obtain
\begin{equation}
  \gbar_j''
  = -2 \sum_{l=0}^{j-1} \beta_l \gbar_l'^2 \prod_{k=l}^{j-2} (1-2\beta_k\gbar_k).
\end{equation}
With the bounds of Lemma~\ref{lem:elementary-recursion},
this gives
\begin{equation}
  \gbar_j''
  = O\left(\frac{\gbar_{j}}{\gbar_0}\right)^2 \sum_{l=0}^{j-1} \beta_l \frac{\gbar_l^2}{\gbar_0^2}
  = O\left(\frac{\gbar_j^2}{\gbar_0^3}\right)
  .
\end{equation}

For $\zbar$, we
define $\sigma_{j,l} = \prod_{k=j}^l (1-\zeta_k \gbar_k)^{-1}$.  Then
\refeq{zbar} becomes $\zbar_j = \sum_{l=j}^\infty \sigma_{j,l}\theta_l\gbar_l^2$.
By \refeq{zetaprod}, $\sigma_{j,l}=O(1)$.
It then follows from (A2), \refeq{gbarprime}, and
Lemma~\ref{lem:elementary-recursion}(ii,iii-b)
that
\begin{equation}
  \sigma_{j,l}'
  = \sigma_{j,l} \sum_{k=j}^l (1-\zeta_k\gbar_k)^{-1} \zeta_k \gbar_k'
  =  \sum_{k=j}^l O(\zeta_k\gbar_k')
  = O\left(\chi_j \frac{\bar g_j}{\gbar_0^2}\right).
\end{equation}
We differentiate \refeq{zbar} and apply
\refeq{gbarprime} and
Lemma~\ref{lem:elementary-recursion}(ii)
to obtain
\begin{equation} \lbeq{zbarprime}
  \zbar_j'
  =
  \sum_{l=j}^\infty \sigma_{j,l}' \theta_l \gbar_l^2
  +
  2 \sum_{l=j}^\infty \sigma_{j,l} \theta_l \gbar_l \gbar_l'
  =
  O\left( \chi_j \frac{\bar g_{j}^2}{\bar g_{0}^2} \right).
\end{equation}
Similarly, $\sigma_{j,l}'' = O(\gbar_j/\gbar_0^3)$ and
\begin{align} \lbeq{zbarprimeprime}
  \zbar_j''
  &=
  \sum_{l=j}^\infty \sigma_{j,l}'' \theta_l \gbar_l^2
  +
  4 \sum_{l=j}^\infty \sigma_{j,l}' \theta_l \gbar_l \gbar_l'
  +
  2\sum_{l=j}^\infty \sigma_{j,l} \theta_l (\gbar_l\gbar_l''+\gbar_l'^2)
  = O\left(\chi_j\frac{\gbar_j^2}{\gbar_0^3}\right)
\end{align}
using the fact
that $\gbar_j^3/\gbar_0^4 = O(\gbar_j^2/\gbar_0^3)$ by \eqref{e:gbarmon}.
It is straightforward to justify the differentiation under the sum in \eqref{e:zbarprime}--\eqref{e:zbarprimeprime}.

For $\mubar_j$,
we recall from \refeq{rhodef}--\refeq{mubar} that
\begin{equation} \lbeq{mubar-diff}
  \bar \mu_j
  =
  - \sum_{l=j}^\infty
  \left( \prod_{k=j}^l (\lambda_k - \tau_k)^{-1} \right) \sigma_l
  ,
\end{equation}
with $\tau_j$ and $\sigma_l$ given by \refeq{etatau}, and with
$0 \le (\lambda_j - \tau_j)^{-1} \le \alpha<1$.  This gives
\begin{equation}
  \mubar_j'
  =
  - \sum_{l=j}^\infty
  \left( \prod_{k=j}^l (\lambda_k - \tau_k)^{-1} \right)
  \left( \sigma_l' + \sum_{i=j}^l (\lambda_i - \tau_i)^{-1}\tau_i' \right).
\end{equation}
The first product is bounded by $\alpha^{l-j+1}$, and this
exponential decay, together with \refeq{etatau}, \refeq{zbarbd}, and the bounds just
proved for $\gbar'$ and $\zbar'$, lead to the upper bound
$|\mubar_j'| \le O(\chi_j \gbar_j^2 \gbar_0^{-2})$ claimed
in \refeq{xbar-derivatives}.  Straightforward further calculation
leads to the bound on $\mubar_j''$ claimed
in \refeq{xbar-derivatives2} (the leading behaviour can be seen from
the $\zbar_j''$ contribution to the $\sigma_l''$ term).
\end{proof}

\subsection{Proof of Proposition~\ref{prop:approximate-flow}}

\begin{proof}[Proof of Proposition~\ref{prop:approximate-flow}]
  The estimates
  \refeq{approximate-flow} follow  from Lemma~\ref{lem:elementary-recursion}(ii)
  and Lemma~\ref{lem:greekbds}.
  The continuity of $\gbar_j$, $\zbar_j$ and $\mubar_j$ in $m$
  follows from Lemma~\ref{lem:greekbds}, and their differentiability in
  the initial condition $\gbar_0$ follows from Lemma~\ref{lem:barflowderiv}.
\end{proof}

\section{Proof of main result}
\label{sec:pf}

In this section, we prove Theorem~\ref{thm:flow}.
We begin in Section~\ref{sec:pert-ODE} with a sketch of the main
ideas, without entering into details.
%The remainder of Section~\ref{sec:pf}
%expands the sketch into a complete proof.

\subsection{Proof strategy}
\label{sec:pert-ODE}

Two difficulties in proving Theorem~\ref{thm:flow} are:
(i) from the point of view of dynamical systems, the evolution map $\Phi$ is
not hyperbolic; and
(ii) from the point of view of nonlinear differential equations,
a priori bounds that any solution to \eqref{e:flow} must satisfy are not
readily available due to the presence of both initial and final
boundary conditions.

Our strategy is to consider the one-parameter
family of evolution maps $(\Phi^t)_{t\in[0,1]}$ defined by
\begin{equation} \lbeq{Phit}
  \Phi^t(x) = \Phi(t,x) = (\psi(x), \bar \varphi(x) + t \rho(x))
  \quad \text{for $t \in [0,1]$,}
\end{equation}
with the $t$-independent boundary conditions that $(K_0,g_0)$ is given
and that $(z_\infty,\mu_\infty)=(0,0)$.
This family interpolates between the
problem $\Phi^1=\Phi$
we are interested in, and the simpler problem
$\Phi^0=\bar\Phi= (\psi , \bar \varphi )$. The unique solution for $\bar\Phi$
is $\bar x_j = (\bar K_j, \bar V_j)$, where $\bar V$ is the unique solution
of $\bar\varphi$ from Section~\ref{sec:approximate-flow},
and where $\bar K_j$ is defined inductively for $j \ge 0$
(recall Lemma~\ref{lem:small-ball}) by
\begin{equation} \lbeq{Kbar-def}
  \bar K_{j+1} = \psi_j(\bar V_j, \bar K_j),
  \qquad \bar K_0 = K_0.
\end{equation}
We refer to $\bar x$ as the \emph{approximate flow}.

We seek a $t$-dependent global flow $x$ which obeys
the generalisation of \refeq{flow} given by
\begin{equation} \lbeq{flowt}
  x_{j+1} = \Phi^t_j(x_j).
\end{equation}
Assuming that $x_j = x_j(t)$ is differentiable in $t$ for each
$j \in \N_0$, we set
\begin{equation} \lbeq{xdot}
  \dot x_j = \ddp{}{t} x_j
  .
\end{equation}
Then differentiation of \eqref{e:flowt}
shows that a family of flows $x = (x_j(t))_{j\in\N_0, t\in[0,1]}$
must satisfy the infinite nonlinear system of ODEs
\begin{equation} \lbeq{ODE}
  \dot x_{j+1} - D_x\Phi_j(t, x_j)\dot x_j = \rho_j(x_j),
  \quad x_j(0) = \bar x_j.
\end{equation}
Conversely, any solution $x(t)$ to \eqref{e:ODE}, for which each $x_j$ is
continuously differentiable in $t$,
gives a global flow for each $\Phi^t$.

We claim that \eqref{e:ODE} can be reformulated
as a well-posed nonlinear ODE
\begin{equation} \lbeq{ODE-F}
  \dot x = F(t,x), \quad x(0)= \bar x,
\end{equation}
in a Banach space of sequences $x = (x_0, x_1, \dots)$
with carefully chosen weights,
and for a suitable
nonlinear functional $F$.
For this, we consider the \emph{linear} equation
\begin{equation}
  y_{j+1} - D_x\Phi_j(t,x_j)y_j = r_j,
\end{equation}
where the sequences $x$ and $r$ are held fixed.
Its solution with the same boundary conditions as stated below \eqref{e:Phit} is written as $y=S(t,x)r$.
Then we define $F$,
which we consider as a map on sequences, by
\begin{equation} \lbeq{F-def1}
  F(t,x) = S(t,x)\rho(x).
\end{equation}
Thus $y=F(t,x)$ obeys the equation
$y_{j+1} - D_x\Phi_j(t,x_j)y_j = \rho_j(x)$, and hence \refeq{ODE-F}
is equivalent to \refeq{ODE} with the same boundary conditions.

The main work in the proof is to obtain good estimates for $S(t,x)$,
in the Banach space of weighted sequences,
which allow us to treat \eqref{e:ODE-F} by the standard theory of ODE.
We establish bounds on the solution simultaneously with existence,
via the weights in the norm. These weights are useful to obtain bounds on the
solution, but they are also essential in the formulation of
the problem as a well-posed ODE.

As we show in more detail in Section~\ref{sec:step1}
below,
the occurrence of $D_x\Phi_j(t, x_j)$ in \refeq{ODE}, rather
than the naive linearisation $D_x \Phi_j(0)$ at the fixed point $x=0$, replaces the
eigenvalue $1$ in the upper left corner of the square matrix
in \refeq{Mmatrix}
by the eigenvalue
$1 - 2\beta_jg_j$, which is less than $1$ except for those negligible $j$ values
for which $\beta_j<0$.
This helps address difficulty (i) mentioned above.
Also, the
weights guarantee that a solution in the Banach space obeys the final conditions
$(z_\infty,\mu_\infty) =(0,0)$, thereby helping to solve difficulty (ii).

\subsection{Sequence spaces and weights}

We now introduce the Banach spaces of sequences used in the reformulation
of \eqref{e:ODE} as an ODE. These are weighted $l^\infty$-spaces.

\begin{defn} \label{defn:norms}
  Let $X^*$ be the space of sequences $x=(x_j)_{j\in\N_0}$ with $x_j \in X_j$.
  For each $\alpha=K,g,z,\mu$ and $j \in \N_0$, we fix a positive weight
  $w_{\alpha,j} > 0$.  We write $x_j \in X_j= \Wcal_j \oplus \Vcal$ as
  $x_j = (x_{\alpha,j})_{\alpha =K, g,z,\mu}$.
  Let
  \begin{equation} \lbeq{norms}
    \|x_j\|_{X^w_j}
    =
    \max_{\alpha=K,g,z,\mu} \;
    (w_{\alpha,j})^{-1} \|x_{\alpha,j}\|_{X_j}
    ,
    \quad
    \|x\|_{X^{w}}
    =
    \sup_{j \in \N_0} \|x_j\|_{X^w_j},
    % \\
    % X^w = \{ x \in X^*: \|x\|_{X^w} < \infty \}.
  \end{equation}
  and
  \begin{equation}
    X^w = \{ x \in X^*: \|x\|_{X^w} < \infty \}.
  \end{equation}
\end{defn}

It is not difficult to check that $X^w$
is a Banach space for any positive weight sequence $w$.
The required weights are
defined in terms of the sequence $\g = (\g_j)_{j\in\N_0}$ which is the
same as the sequence $\bar g$ for a \emph{fixed} $\g_0$; i.e., given
$\g_0>0$, it satisfies $\g_{j+1} = \g_j - \beta_j \g_j^2$.
We need two different choices of weights $w$, defined in terms
of the parameters $\domK,\domV$ of \refeq{Djdef} and the parameter
$\domK^*$ of Lemma~\ref{lem:small-ball}.  These are
the weights $\w=\w(\g_0,\domK,\domK^*,\domV)$ and $\v=\v(\g_0,\domK,\domK^*,\domV)$ defined by
\begin{align}
  \lbeq{weights}
  \w_{\alpha,j}  &=
  \begin{cases}
    \hK \chi_j \g_j^3 & \alpha=K
    \\
    \hV\g_j^2 |\log \g_j| & \alpha=g
    \\
    \hV \chi_j \g_j^2 |\log \g_j| & \alpha=z,\mu,
  \end{cases}
  \;\;
  \v_{\alpha,j} =
  \begin{cases}
    \hK\chi_j \g_j^3  & \alpha=K
    \\
    \hV \chi_j \g_j^3 & \alpha=g, z,\mu,
  \end{cases}
\end{align}
where $(\chi_j)$ is the $\Omega$-dependent sequence defined by \refeq{chidef}.
Furthermore, let
$\bar x = (\bar K, \bar V) = \bar x (K_0, g_0)$
denote the sequence in $X^*$ that is uniquely determined
from the boundary conditions $(\bar K_0, \gbar_0) = (K_0,g_0)$ and
$(\zbar_\infty ,\mubar_\infty)=(0,0)$
via $\bar V_{j+1} = \bar\varphi_j(\bar V_j)$
and $\bar K_{j+1} = \psi_j(\bar K_j,\bar V_j)$,
whenever the latter is well-defined.
Given an initial condition $(\smash{\Kr_0}, \g_0)$, let $\x = \bar x(\smash{\Kr_0}, \g_0)$.

Let $s\B$ denote the closed ball of radius $s$ in $X^\w$.
If $\g_0=g_0$ and $\Kr_0=K_0$, the
desired bounds \refeq{VVbar1}--\refeq{VVbar4} are equivalent
to $x \in \x+\b \B$.  Also,
the projection of $\x+\B$ onto the
the $j^{\rm th}$ sequence element is contained in the domain $D_j$
defined by \refeq{Djdef}.
We  always assume that $\g_0$ is close to $g_0 = \bar g_0$,
but not necessarily that they are equal.
The use of $\g$ rather than $\gbar$ permits us to vary the initial condition
$g_0=\gbar_0$ without changing the Banach spaces $X^{\w},X^{\v}$.
The use of $g_0$-dependent weights rather than, e.g., the weight $j^{-2} \log j$ for $\jm=\infty$
(see Remark~\ref{rk:flow}(i))
allows us to obtain estimates with good behaviour as $g_0 \to 0$.
Note that the weight $\w_{g,j}$ does not include a factor $\chi_j$,
and thus does not go to $0$ when $\jm < \infty$
(see Example~\ref{ex:chi}(ii)).

\begin{rk} \label{rk:weights}
  The weights $\w$ apply to the sequence $\dot x$ of \refeq{xdot}.
  As motivation for their definition, consider the explicit example of
  $\rho_j(x_j) = \chi_j g_j^3$.  In this case, the $g$ equation becomes simply
  \begin{equation}
    g_{j+1} = g_j - \beta_j g_j^2 + t\chi_jg_j^3.
  \end{equation}
  With the notation $\dot g_j = \frac{\partial}{\partial t} g_j^t$, differentiation gives
  \begin{equation}
    \dot g_{j+1} = \dot g_j  (1- 2\beta_j g_j + 3t\chi_jg_j^{2}) + \chi_j g_j^3.
  \end{equation}
  Thus, by iteration, using $\dot g_0 =0$, we obtain
  \begin{equation}
    \dot g_{j}
    =
    \sum_{l=0}^{j-1}  \chi_lg_{l}^3 \prod_{k=l+1}^{j-1} (1- 2\beta_k g_k + 3t\chi_kg_k^3).
  \end{equation}
  For simplicity, consider the case $t=0$, for which
  $g=\gbar$.  In this case, it follows from \eqref{e:prodbd},
  \refeq{gbarmon}, and \refeq{chigbd-bis} that
  \begin{align}
    \dot g_{j}
    &
    \le O(1) \sum_{l=0}^{j-1} \left( \frac{\bar g_{j}}{\bar g_{l+1}} \right)^2
    \chi_l \gbar_{l}^3
    = O(1) g_{j}^2 \sum_{l=0}^{j-1}  \chi_l \gbar_{l}
    \le O(1) \gbar_j^2 |\log \gbar_j|,
  \end{align}
  which produces the weight $\w_{g,j}$ of \refeq{weights}.
  (It can be verified using \refeq{gbarsumbis} that if we replace
   $\chi_j$ by $\beta_j$ in the above then
  no smaller weight will work.)
\end{rk}

\subsection{Implications of Assumption~(A3)}
\label{sec:R-bounds}

For $\phi$ equal to either of the maps $\rho,\psi$
of \eqref{e:F}, we define $\phi: \x + \u\B \to X^*$ by
\begin{equation} \lbeq{rho-def}
  (\phi(x))_0 = 0, \quad (\phi(x))_{j+1} = \phi_j(x_j).
\end{equation}
The next lemma expresses immediate
consequences of Assumption~(A3) for $\rho$ and $\psi$
in terms of the weighted spaces.

\begin{lemma} \label{lem:R-bounds}
  Assume (A3) and that $\g_0 >0$ is sufficiently small.
  The map $\rho$ obeys
  \begin{equation}
  \lbeq{rhoA3weight}
    \|\rho(x)\|_{X^\v}
    \leq M \hV^{-1}.
  \end{equation}
  Let $\omega > \kappa\Omega$, and let $\phi$ denote either $\psi$ or $\rho$.
  The map $\phi: \x + \u\B \to X^\v$ %and $\psi: \x + \u\B \to X^\v$
  is twice continuously
  Fr\'echet differentiable,
  and there exists a constant $C = C(\domK,\domK^*,\domV)$ such that
  \begin{gather}
    \|D_K\rho(x)\|_{L(X^\w,X^\v)}
    \leq C,
    \quad
    \|D_K\psi(x)\|_{L(X^\w,X^\v)}
    \leq \omega,
    \nnb
    \|D_V\phi(x)\|_{L(X^\w,X^\v)}
    \leq O(\g_0 |\log \g_0|),
    \quad
    \|D_x^2\phi(x)\|_{L^2(X^\w,X^\v)}
    \leq C.
    \lbeq{Rw1-bound}
  \end{gather}
\end{lemma}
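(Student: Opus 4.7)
The plan is to translate each pointwise bound from Assumption~(A3) into a corresponding estimate between the weighted sequence spaces $X^\w$ and $X^\v$, by dividing each level-$j$ or level-$(j{+}1)$ bound by the appropriate weight ratio $\w_{\beta,j}/\v_{\alpha,j+1}$. A preliminary step is to replace $\gbar_j$, which appears in the bounds of (A3), by $\g_j$, which appears in the weights: since $\g_0$ is assumed close to $\gbar_0 = g_0$, Lemma~\ref{lem:elementary-recursion}(iv) applied to $\g$ and $\gbar$ gives $\g_j = (1+O(g_0))\gbar_j$ uniformly in $j$, and correspondingly $|\log \g_j| = (1+O(g_0))|\log\gbar_j|$ for $g_0$ small. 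With this in hand, the bound \refeq{rhoA3weight} is immediate: dividing $\|\rho_j(x_j)\|_\Vcal \leq M\chi_{j+1}\gbar_{j+1}^3$ by the weight $\v_{\alpha,j+1} = \hV\chi_{j+1}\g_{j+1}^3$ for each $\alpha\in\{g,z,\mu\}$ yields $M\hV^{-1}(1+O(g_0))$, which is the claim once $g_0$ is taken small (and the $O(g_0)$ slack is absorbed).

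For the first-derivative estimates in \refeq{Rw1-bound} I would use that any continuous linear $T_j: X_j \to X_{j+1}$ with component operators $T_j^{\alpha\beta}$ induces an operator $X^\w\to X^\v$ of norm at most $\sup_{j,\alpha,\beta}(\w_{\beta,j}/\v_{\alpha,j+1})\|T_j^{\alpha\beta}\|$. For $D_K\psi_j$ only the $(K,K)$ component contributes and the weight ratio is $\w_{K,j}/\v_{K,j+1} = (\chi_j/\chi_{j+1})(\g_j/\g_{j+1})^3 \leq \Omega(1+O(g_0))$; combined with $\|D_K\psi_j\|\leq\kappa$ from \refeq{R-Lip-bound-K}, this gives $\kappa\Omega(1+O(g_0)) \leq \omega$ whenever $g_0$ is small, using $\omega > \kappa\Omega$. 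For $D_K\rho_j$ the analogous calculation with target weight $\v_{\alpha,j+1} = \hV\chi_{j+1}\g_{j+1}^3$ ($\alpha\in\{g,z,\mu\}$) and operator bound $M$ yields a constant depending only on $\hK,\hV,\Omega,M$. For $D_V\phi_j$, the input $\|V\|_\Vcal$ is dominated by $\max_\beta \w_{\beta,j} = \hV\g_j^2|\log\g_j|$, which is the largest of the three $V$-weights since $\chi_j\leq 1$; combined with the operator bound $M\chi_{j+1}\gbar_{j+1}^2$ and dividing by the appropriate $\v_{\alpha,j+1}$, the resulting ratio works out to $O(\g_j|\log\g_j|) = O(\g_0|\log\g_0|)$, using that $t\mapsto t|\log t|$ is increasing on $(0,e^{-1})$ and $\g_j = O(\g_0)$. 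The second-derivative bound is obtained by the same componentwise mechanism applied to \refeq{R-Lip-bound-higher} with $n+m=2$: each of the three bilinear pieces $D_K^2\phi_j$, $D_V D_K\phi_j$, $D_V^2\phi_j$ contributes an $O(1)$ constant after multiplication by the product of two source weights and division by one target weight, with all powers of $\g_j$ and of $|\log\g_j|$ cancelling exactly.

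Twice continuous Fréchet differentiability of $\phi$ as a map $\x+\B\to X^\v$ then follows from the pointwise Fréchet differentiability of each $\phi_j$ provided by (A3) together with the uniform operator bounds just established: the candidate derivative is the sequence of pointwise derivatives, the uniform bounds guarantee this defines a bounded (bi)linear operator $X^\w\to X^\v$ (resp.\ $L^2(X^\w,X^\v)$) at each $x$, and continuity in $x$ in the operator norm reduces to continuity at each level $j$ (given by (A3)) plus uniformity across $j$, by a standard dominated-convergence argument. The main subtlety, and the reason the weights in \refeq{weights} are chosen as they are, is the $D_K\psi$ bound: the weight ratio $\chi_j/\chi_{j+1}$ reaches its maximum value $\Omega$ for $j>\jm$, which forces the strict requirement $\omega > \kappa\Omega$ and is the reason (A3) imposes $\kappa < \Omega^{-1}$. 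This strict contractivity of the $K$-direction in the weighted norm, which survives the distortion introduced by the $\chi$-factors, is precisely what drives the fixed-point argument in the subsequent sections.
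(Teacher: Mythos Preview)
Your approach is essentially the paper's: both translate the pointwise bounds of (A3) into operator bounds between $X^\w$ and $X^\v$ by computing weight ratios $\w_{\beta,j}/\v_{\alpha,j+1}$, and both identify the $D_K\psi$ estimate as the critical one forcing $\omega>\kappa\Omega$. You are in fact more careful than the paper about the distinction between $\gbar_j$ (appearing in (A3)) and $\g_j$ (appearing in the weights), invoking Lemma~\ref{lem:elementary-recursion}(iv) to pass between them; the paper's displayed computation \refeq{rhowbd} silently identifies the two.

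There is, however, a gap in your argument for \emph{twice continuous} Fr\'echet differentiability. You only invoke \refeq{R-Lip-bound-higher} with $n+m=2$, and then claim continuity of $D^2\phi$ in $L^2(X^\w,X^\v)$ follows from ``continuity at each level $j$ plus uniformity across $j$, by a standard dominated-convergence argument.'' This does not work: pointwise continuity of $D^2\phi_j$ together with a uniform bound on $\|D^2\phi_j\|$ does \emph{not} imply continuity of $x\mapsto D^2\phi(x)$ in the $\sup_j$ norm, because there is no compactness in the $j$-direction. What you actually need is \emph{equicontinuity} in $j$, and this is exactly what the $n+m=3$ case of \refeq{R-Lip-bound-higher} provides. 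The paper uses it explicitly: the uniform third-derivative bound gives the quadratic remainder estimate
\[
  \|D\phi(x+y)-D\phi(x)-D^2\phi(x)y\|_{L(X^\w,X^\v)} \le C\|y\|_{X^\w}^2,
\]
which simultaneously shows that the componentwise second derivative is the Fr\'echet second derivative and that $D^2\phi$ is (Lipschitz) continuous. Your argument needs the same ingredient; once you add the $n+m=3$ weight-ratio check (which is routine and yields an $O(1)$ bound just as in the $n+m=2$ case), the gap closes.
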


\begin{proof}
The bound \refeq{rhoA3weight}
is equivalent to \refeq{R-bound} (recall  \refeq{rho-def}) since
\begin{equation} \lbeq{rhowbd}
  \|\rho_{j}(x_j)\|_{X_{j+1}^\v}
  = \v_{g,j+1}^{-1} \|\rho_{j}(x_j)\|_{\Vcal}
  \le \v_{g,j+1}^{-1} M \chi_{j+1} \g_{j+1}^3
  = M/\hV.
\end{equation}

Next we verify
the bounds on the first derivatives in \eqref{e:Rw1-bound}.
By assumptions \eqref{e:R-Lip-bound-K}--\eqref{e:R-Lip-bound-V},
together with \refeq{gbarmon}, the definition of
the weights \eqref{e:weights}, and for \refeq{DRKK-w-bound} also
the fact that  $\chi_{j}/\chi_{j+1} \le \Omega$  by \refeq{chidef},
we obtain for $x \in \x+\u\B$,
\begin{alignat}{3}
  \|D_V\psi_{j}(x_j)\|_{L(X_j^\w,X_{j+1}^\v)}
  &\leq M\chi_{j+1} \g_{j+1}^2 \v_{K,j+1}^{-1} \w_{g,j}
  &&\leq O(\g_0|\log \g_0|),
  \lbeq{DRKV-w-bound}
  \\
  \|D_K\psi_{j}(x_j)\|_{L(X^\w_j, X^\v_{j+1})}
  &\leq \kappa \v_{K,j+1}^{-1} \w_{K,j}
  &&\leq \kappa\Omega (1+O(\g_0)),
  \lbeq{DRKK-w-bound}
  \\
  \|D_V\rho_{j}(x_j)\|_{L(X_j^\w,X_{j+1}^\v)}
  &\leq M\chi_{j+1} \g_{j+1}^2 \v_{g,j+1}^{-1} \w_{g,j}
  &&\leq O(\g_0|\log \g_0|),
  \lbeq{DRVV-w-bound}
  \\
  \|D_K\rho_{j}(x_j)\|_{L(X_j^\w,X_{j+1}^\v)}
  &\leq M\v_{g,j+1}^{-1} \w_{K,j}
  &&
  \leq O(1)
  ,
  \lbeq{DRVK-w-bound}
\end{alignat}
which establishes the bounds on the first derivatives in
\eqref{e:Rw1-bound}, for $\g_0$ sufficiently small.

The bounds on the second derivatives are also immediate consequences of Assumption~(A3).
First, \eqref{e:R-Lip-bound-higher} and the definition of the weights \eqref{e:weights} imply that, for $2 \leq n+m \leq 3$,
\begin{equation}
  \|D_K^nD_V^m\phi\|_{L^{n+m}(X^\w,X^\v)}
  \leq C
  .
\end{equation}
In addition, these bounds on the second and third derivatives
imply that
\begin{align}
  \|\phi(x+y)-\phi(x)-D\phi(x)y\|_{X^\v} & \leq C \|y\|_{X^\w}^2,
  \\
  \lbeq{DDR-bound}
  \|D\phi(x+y) - D\phi(x) - D^2 \phi(x)y\|_{L(X^\w,X^\v)}
  &\leq C \|y\|_{X^\w}^2,
\end{align}
and hence that $\phi: \x + \u\B \to X^\v$ is indeed twice Fr\'echet differentiable.
The above bound on the third derivatives also implies continuity of this differentiability.
This completes the proof.
\end{proof}

The smoothness of $\bar x$ is addressed in the following lemma.

\begin{lemma} \label{lem:xbarderiv}
  Assume (A1--A3), and
  let $\delta >0$ and $\g_0 > 0$ both be sufficiently small.
  Then there exists a neighbourhood $\bar\I = \bar\I_{\delta} \subset \Wcal_0\oplus \R_+$
  of $(\Kr_0,\g_0)$ such that
  $\bar x: \bar\I \to \x + \delta\B$ is continuously Fr\'echet differentiable,
  and
  \begin{equation} \lbeq{barx-dg0-bd}
    \|D_{g_0} \bar x\|_{X^\w}
    \leq O(\g_0^{-2} |\log \g_0|^{-1})
    .
  \end{equation}
  The neighbourhood $\bar\I$ contains a ball centred at $(\Kr_0,\g_0)$
  with radius depending only on $\g_0,\delta$, and the constants in (A1--A3),
  which is bounded below away from $0$, uniformly on compact subsets of small $\g_0>0$.
\end{lemma}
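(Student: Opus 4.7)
The plan is to define $\bar x(K_0', g_0') = (\bar K_j, \bar V_j)_{j\in\N_0}$ directly from its defining relations: $\bar V = \bar V(g_0')$ is the unique global flow of $\bar\varphi$ from Proposition~\ref{prop:approximate-flow} with initial condition $g_0'$, and $\bar K_j$ is defined inductively by $\bar K_0 = K_0'$ and $\bar K_{j+1} = \psi_j(\bar K_j, \bar V_j)$. For $(K_0', g_0')$ in a small neighbourhood of $(\Kr_0, \g_0)$ with $g_0'$ sufficiently small, Proposition~\ref{prop:approximate-flow} ensures $\bar V$ exists, and Lemma~\ref{lem:small-ball}, applied with some $\domK^* \in (R/(1-\kappa\Omega), \domK]$, guarantees $\bar K_j \in \pi_KD_j$ for every $j \ge 0$, so the iteration is well-defined.

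Differentiability and the bound on $D_{g_0}\bar x$ both come from differentiating these defining relations. For the $V$-part, Lemma~\ref{lem:barflowderiv} gives $\bar V_j$ twice continuously differentiable in $g_0'$ with $|\partial_{g_0}\bar g_j| = O(\g_j^2/\g_0^2)$ and $|\partial_{g_0}\bar z_j|, |\partial_{g_0}\bar\mu_j| = O(\chi_j \g_j^2/\g_0^2)$, and dividing by the weights in \eqref{e:weights} immediately yields
\begin{equation*}
  \w_{\alpha,j}^{-1}\,|\partial_{g_0}\bar V_{\alpha,j}|
  \;\le\; O\bigl(\g_0^{-2} |\log \g_j|^{-1}\bigr)
  \;\le\; O\bigl(\g_0^{-2} |\log \g_0|^{-1}\bigr)
\end{equation*}
for each $\alpha \in \{g,z,\mu\}$, already matching the claimed bound \eqref{e:barx-dg0-bd} on this component. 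For the $K$-part, differentiating $\bar K_{j+1} = \psi_j(\bar K_j, \bar V_j)$ and iterating with $\partial_{g_0}\bar K_0 = 0$ gives
\begin{equation*}
  \partial_{g_0}\bar K_j = \sum_{l=0}^{j-1} \biggl(\prod_{k=l+1}^{j-1} D_K\psi_k(\bar x_k)\biggr) D_V\psi_l(\bar x_l)\,\partial_{g_0}\bar V_l,
\end{equation*}
while $\partial_{K_0}\bar K_j$ is a pure product of the $D_K\psi_k(\bar x_k)$. Continuous Fr\'echet differentiability of $\bar K$ in $(K_0', g_0')$ then follows by induction from the continuity of $\psi, D_K\psi, D_V\psi$ in their arguments provided by Assumption~(A3) and Lemma~\ref{lem:R-bounds}.

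The main obstacle is to estimate this $K$-sum sharply. Combining the pointwise bounds $\|D_K\psi_k\|_{L(\Wcal_k,\Wcal_{k+1})} \le \kappa$ and $\|D_V\psi_l\|_{L(\Vcal,\Wcal_{l+1})} \le M\chi_{l+1}\g_{l+1}^2$ from (A3) with $|\partial_{g_0}\bar V_l|_\Vcal \le O(\g_l^2/\g_0^2)$ yields
\begin{equation*}
  \|\partial_{g_0}\bar K_j\|_{\Wcal_j}
  \;\le\; O(\g_0^{-2}) \sum_{l=0}^{j-1} \kappa^{j-1-l}\,\chi_{l+1}\,\g_{l+1}^4,
\end{equation*}
and I will show this sum is $O(\chi_j\g_j^4)$, so that dividing by $\w_{K,j} = \hK\chi_j\g_j^3$ produces $O(\g_j/\g_0^2) \le O(\g_0^{-1}) \le O(\g_0^{-2}|\log\g_0|^{-1})$. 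This relies on the quantitative restriction $\kappa\Omega < 1$ built into (A3): using $\chi_{l+1}/\chi_j \le \Omega^{j-1-l}$, which is immediate from \eqref{e:chidef}, and $\g_{l+1}/\g_j \le O\bigl((1+\g_0 j)/(1+\g_0(l+1))\bigr)$ extracted from the proof of Lemma~\ref{lem:elementary-recursion}(ii), each summand is bounded by $(\kappa\Omega)^{j-1-l}\chi_j\g_j^4 \cdot O\bigl((1+\g_0 j)/(1+\g_0(l+1))\bigr)^4$; splitting the $l$-range at $j/2$, the terms with $l$ close to $j-1$ contribute a geometric series bounded by $O(\chi_j\g_j^4)$, while the terms with $l \ll j/2$ are killed by $(\kappa\Omega)^{j/2}$ beating the polynomial factor $(1+\g_0 j)^4$. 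Finally, the neighbourhood $\bar\I$ and the lower bound on its radius follow from the derivative estimate by the mean value theorem: a ball of radius of order $\delta\,\g_0^2\,|\log\g_0|$ around $(\Kr_0,\g_0)$ is carried by $\bar x$ into $\x + \delta\B$, and this radius depends only on $\delta, \g_0$, and the constants in (A1--A3), hence is bounded below away from zero uniformly on compact subsets of small $\g_0 > 0$.
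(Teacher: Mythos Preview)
Your overall strategy matches the paper's: use Lemma~\ref{lem:barflowderiv} for the $\bar V$-derivatives, then propagate through the $\bar K$-recursion using the contraction $\|D_K\psi\|\le\kappa$ from (A3). Two points need correction.

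First, the ratio estimate $\g_{l+1}/\g_j \le O\bigl((1+\g_0 j)/(1+\g_0(l+1))\bigr)$ is not what Lemma~\ref{lem:elementary-recursion}(ii) provides: that lemma gives only an \emph{upper} bound on $\chi_j\g_j$, whereas your ratio requires a lower bound on $\g_j$. The clean fix is to use instead the stepwise bound $\g_k/\g_{k+1}=1+O(\g_0)$ from \eqref{e:gbarmon}, which iterates to $\g_{l+1}/\g_j\le(1+O(\g_0))^{j-1-l}$. Combined with $\chi_{l+1}\le\Omega^{j-1-l}\chi_j$, the sum becomes $\chi_j\g_j^4\sum_{m\ge 0}\bigl(\kappa\Omega(1+O(\g_0))^4\bigr)^m$, a convergent geometric series for small $\g_0$ since $\kappa\Omega<1$; no splitting at $j/2$ is needed. (The paper avoids the sum altogether by closing the same recursion inductively: if $\|D_{g_0}\bar K_j\|\le C\chi_{j-1}\g_{j-1}^4/\g_0^2$ then the recursion plus $\g_{j-1}/\g_j=1+O(\g_0)$ gives the same bound at $j+1$.)

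Second, and more substantively, you establish that each component $\bar V_j$ and $\bar K_j$ is $C^1$ and that the resulting sequence of derivatives lies in $X^\w$, but you have not shown that this sequence \emph{is} the Fr\'echet derivative of $\bar x$ as a map into the Banach space $X^\w$. Componentwise differentiability with a bounded candidate derivative does not suffice: you need the Taylor remainders $\|\bar x_j(u_0+h)-\bar x_j(u_0)-D\bar x_j(u_0)h\|_{X_j^\w}$ to be $o(\|h\|)$ \emph{uniformly in $j$}. The paper obtains this by bounding the second derivatives $\bar V_j''$ (from Lemma~\ref{lem:barflowderiv}) and $D^2\bar K_j$ (by a further induction using \eqref{e:R-Lip-bound-higher}) uniformly in the weighted norm, and then invoking the mean-value inequality. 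Your proposal omits this step entirely; without it the conclusion that $\bar x:\bar\I\to X^\w$ is $C^1$ is not justified.
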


\begin{proof}
Let
\begin{equation}
  \bar\I = ([\half \g_0, 2\g_0]
  \times \Wcal_0) \cap \bar x^{-1}(\x+\delta\B).
\end{equation}
We will show that $\bar\I$ is a neighbourhood of $(\Kr_0,\g_0)$
and that
$\bar x : \bar\I \to \x + \delta\B$ is continuously Fr\'echet differentiable.
Since $\bar x^{-1}(\x+\delta\B) = \bar V^{-1}(\x+\delta\B) \cap \bar K^{-1}(\x+\delta\B)$,
it suffices to show that each of
$\bar V^{-1}(\x+\delta\B)$ and $\bar K^{-1}(\x+\delta\B)$ is a neighbourhood
of $(\Kr_0, \g_0)$, and that each of $\bar V$ and $\bar K$ is continuously
Fr\'echet differentiable on $\bar\I$ as maps with values in subspaces of $X^\w$.

We begin with $\bar V$.
Let $\bar V_j'$ denote the derivative of $\bar V_j$ with respect to
$g_0$, and let $\bar V'=(\bar V'_j)$ denote the sequence of derivatives.
It is straightforward to conclude from
Lemma~\ref{lem:barflowderiv}, Lemma~\ref{lem:elementary-recursion}(iv),
and \refeq{weights} that
\begin{equation}
  \lbeq{DgVbar-bd}
  \|\bar V'\|_{X^\w} \leq O(\g_0^{-2}|\log\g_0|^{-1}).
\end{equation}
In particular, this implies that $\bar V^{-1}(\x+\delta\B)$ contains a
neighbourhood of $\g_0$
satisfying the condition stated below \eqref{e:barx-dg0-bd}.
That $\bar V'$ is actually the derivative of $\bar V$ in the space $X^\w$
can be deduced from the fact that
the sequence $\bar V''(g_0)$ is uniformly bounded in $X^\w$ for $g_0 \in \bar\I_g$
(though not uniform in $\g_0$) by Lemma~\ref{lem:barflowderiv}, using
\begin{equation} \lbeq{VFrechet}
  \|\bar V_j(g_0+\varepsilon)-\bar V_j(g_0) - \varepsilon \bar V_j'(g_0)\|_{X^\w_j} \leq O(\varepsilon^2) \sup_{0<\varepsilon'<\varepsilon} \|\bar V_j''(g+\varepsilon')\|_{X_j^\w}
 .
\end{equation}
The continuity of  $\bar V'$ in $X^\w$ follows similarly.

For $\bar K$, we first note that
$\|D_{K_0}\bar K_0\|_{L(\Wcal_0,\Wcal_0)} =1$, $\|D_{g_0}\bar K_0\|_{\Wcal_0} = 0$.
By (A3) and induction,
\begin{align}
  \|D_{K_0} \bar K_{j+1}\|_{L(\Wcal_0,\Wcal_{j+1})}
  &\leq \kappa \|D_{K_0}\bar K_j\|_{L(\Wcal_0,\Wcal_{j})}
  \leq \kappa^{j+1}
  .
  \lbeq{DKKbar-bd}
\end{align}
Since $\kappa< \Omega^{-1} < 1$, and since $\g_{j+1}/\g_j \to 1$ by \eqref{e:gbarmon}, we obtain
\begin{align}
  \|D_{K_0} \bar K_{j+1}\|_{L(\Wcal_0,\Wcal_{j+1})}
  \leq O (\g_0^{-3}\w_{K,j+1})
  .
  \lbeq{DKKbar-bd-w}
\end{align}
Similarly, by \eqref{e:R-Lip-bound-V} and Lemma~\ref{lem:barflowderiv},
\begin{align}
  \|D_{g_0} \bar K_{j+1}\|_{\Wcal_{j+1}}
  &\leq \kappa \|D_{g_0}\bar K_j\|_{\Wcal_{j}}
       + O(\chi_j \bar g_j^2) \|D_{g_0}\bar V_j\|_{\Vcal}
  \nnb
  &\leq \kappa \|D_{g_0}\bar K_j\|_{\Wcal_{j}}
       + O(\chi_j \bar g_j^4/\gbar_0^2)
  .
  \lbeq{DgKbar-bd-2}
\end{align}
By induction,
again using $\kappa < \Omega^{-1}$ and $\gbar_j \leq \gbar_0$, we conclude
\begin{align}
  \|D_{g_0} \bar K_{j+1}\|_{\Wcal_{j+1}}
  &
  \leq O(\chi_j \bar g_j^4/\bar g_0^2) \leq O(\g_0^{-1} \w_{K,j+1}).
  \lbeq{DgKbar-bd}
\end{align}
These bounds imply
that $\bar K^{-1}(\x+\delta\B)$ contains a neighbourhood of $(\Kr_0,\g_0)$
satisfying the condition stated below \eqref{e:barx-dg0-bd},
and also that
the component-wise derivatives of $\bar K$ with respect to $g_0$ and $K_0$
are respectively
in $X^\w \cong L(\R, X^\w)$ and $L(\Wcal_0, X^\w)$.

To verify that the component-wise
derivative of the sequence $\bar K$ is the Fr\'echet derivative in the sequence space $X^\w$, it again suffices to obtain bounds on
the second derivatives in $X^\w$, as in \eqref{e:VFrechet}.
For example, since $D_{K_0}^2 \bar K_0 = 0$, $D_{K_0} \bar V_j = 0$,  and
\begin{equation} \lbeq{DK2barK-rec}
  D_{K_0}^2 \bar K_{j+1} = D_{K} \psi(\bar K_j, \bar V_j) D_{K_0}^2 \bar K_j
  + D_{K}^2 \psi(\bar K_j, \bar V_j) D_{K_0}\bar K_j D_{K_0}\bar K_j,
\end{equation}
it follows from \eqref{e:DKKbar-bd},
\eqref{e:R-Lip-bound-K}--\eqref{e:R-Lip-bound-higher},
and induction that,
for $(K_0,g_0) \in \bar \I$ with $\bar \I \subset \Wcal_0 \oplus \R$ chosen sufficiently small,
and with $\omega \in (\kappa\Omega,1)$,
\begin{equation}
  \|D_{K_0}^2 \bar K_{j+1}\| \leq
  \kappa \|D_{K_0}^2 \bar K_j\| + O(\gbar_0^{-3} \kappa^{j}\omega^j)
  \leq O (\g_0^{-6}\w_{K,j+1})
  ,
\end{equation}
and thus that the component-wise derivative $D_{K_0}^2 \bar K$ is bounded in the norm of $L^2(\Wcal_0, X^\w)$
for $(K_0,g_0) \in \bar \I$. Similarly, slightly more complicated recursion
relations than \eqref{e:DK2barK-rec}
for $D_{g_0}^2\bar K_j$ and $D_{g_0}D_{K_0}\bar K_j$ show that the component-wise second derivative
of $\bar K$ is uniformly bounded in $L^2(\Wcal_0 \oplus \R, X^\w)$ when $\bar \I$ is again chosen sufficiently small.
This shows as in \eqref{e:VFrechet} that $\bar K$ is continuously Fr\'echet differentiable
from $\bar \I$ to $X^\w$.

We have thus shown that $\bar x$  is continuously Fr\'echet differentiable
from a neighbourhood $\bar \I$ of $(\Kr_0,\g_0)$ to $X^\w$, and \refeq{barx-dg0-bd}
follows from \refeq{DgVbar-bd} and \refeq{DgKbar-bd}.
\end{proof}

\subsection{Reduction to a linear equation with nonlinear perturbation}
\label{sec:odepf}

For given sequences $x,r \in X^*$,
we now consider the equation
\begin{equation} \lbeq{DPhiyr}
  y_{j+1} - D_x\Phi_j(t, x_j)y_j = r_j.
\end{equation}
With $x$ and $r$ fixed, this is an inhomogeneous linear equation in $y$.
Lemma~\ref{lem:DPhiyr} below, which lies at the heart of the proof of
Theorem~\ref{thm:flow}, obtains bounds on solutions to \refeq{DPhiyr}, including
bounds on its $x$-dependence.
The latter allows us
to use the standard theory of ODE in Banach spaces to treat the
original nonlinear equation, where $x$ and $r$ are both functionals of the solution $y$,
as a perturbation of the linear equation.

In addition to the decomposition $X_j=\Wcal_j \oplus \Vcal$,
with $x_j \in X_j$ written $x_j =(K_j,V_j)$,
it is convenient to also use the decomposition $X_j = E_{j} \oplus F_{j}$ with
$E_{j} = \Wcal_j \oplus \R$ and $F_{j}= \R \oplus \R$,
for which we write $x_j=(u_j,v_j)$ with $u_j=(K_j,g_j)$ and $v_j=(z_j,\mu_j)$.
We denote by $\proj_\alpha$ the projection operator onto
the $\alpha$-component of the space in which it is applied,
where $\alpha$ can be in any of $\{K, V\}$, $\{u, v\}=\{(K,g),(z,\mu)\}$,
or $\{K,g,z,\mu\}$.

Recall that the spaces of sequences $X^w$
are defined in Definition~\ref{defn:norms} and the
specific weights $\w$ and $\v$ in \eqref{e:weights}.
The following lemma is proved in Section~\ref{sec:pf-DPhiyr}.

\begin{lemma} \label{lem:DPhiyr}
  Assume (A1--A3).
  Then there is a constant $C_S$, independent of the parameters $\domK$ and $\domV$ in \refeq{Djdef},
  and a constant $C_S'= C_S'(\domK,\domV)$,
  such that if $\g_0 >0$ is sufficiently small, the following hold
  for all $t \in [0,1]$, $x \in \x + \u\B$.
  \begin{enumerate}[(i)]
  \item
  For $r \in X^{\v}$, there exists a
  unique solution $y=S(t,x)r \in X^\w$
  of \eqref{e:DPhiyr} with boundary conditions
    $\proj_{u} y_0 = 0$,  $\proj_{v} y_\infty = 0$.
  \item
  The linear solution operator $S(t,x)$
  satisfies
  \begin{equation} \lbeq{Stx-bound}
    \|S(t,x)
    \|_{L(X^{\v},X^{\w})}
    \leq C_S
    .
  \end{equation}
  \item
  As a map $S : [0,1] \times (\x + \u\B) \to L(X^{\w}, X^{\v})$, the solution operator is continuously
  Fr\'echet differentiable and satisfies
  \begin{equation} \lbeq{DStx-bound}
    \|D_xS(t,x)\|_{L(X^{\w},L(X^{\v},X^{\w}))}
    \leq C_{S}'
    .
  \end{equation}
\end{enumerate}
\end{lemma}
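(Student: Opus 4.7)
The plan is to exploit the block structure of $D_x\Phi_j(t,x_j)$ with respect to the decomposition $X_j = E_j \oplus F_j$ (with $E_j = \Wcal_j \oplus \R$ for $u=(K,g)$ and $F_j = \R^2$ for $v=(z,\mu)$), solving the $u$-component forward from $u_0 = 0$ and the $v$-component backward from $v_\infty = 0$. Writing
\begin{equation}
  D_x\Phi_j(t,x_j) = \begin{pmatrix} A_j & B_j \\ C_j & D_j \end{pmatrix},
\end{equation}
the diagonal block $A_j$ has $K$--$K$ entry $D_K\psi_j$ of norm at most $\kappa$ and $g$--$g$ entry $1 - 2\beta_j g_j + O(\chi_j \g_j^2)$; the diagonal block $D_j$ has $z$--$z$ entry $1 - \zeta_j g_j + O(\chi_j \g_j^2)$ (with $D_j^{-1}$ bounded by Lemma~\ref{lem:elementary-recursion}(iii-b), since $\zeta_j \le 0$ off an exceptional set by (A2)) and $\mu$--$\mu$ entry $\lambda_j - \tau_j$ satisfying $(\lambda_j - \tau_j)^{-1} \le \alpha < 1$ as in \refeq{rhodef}. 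All remaining off-diagonal entries inside $A_j, D_j$ and all entries of the coupling blocks $B_j, C_j$ arise from the quadratic part of $\bar\varphi$ or from the perturbation $\psi_j, \rho_j$, hence are of size $O(\chi_j \g_j)$ or smaller by (A2--A3). This is the linearised analogue of the triangular structure used in Section~\ref{sec:approximate-flow}.

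For parts (i) and (ii), I would first construct a ``decoupled'' solution operator $S_0(t,x)\colon X^\v \to X^\w$ by ignoring $B_j, C_j$ and the secondary off-diagonal terms inside $A_j, D_j$, using forward iteration for $u$ (explicit kernel $\prod_{k} A_k^{\mathrm{diag}}$ controlled by \refeq{prodbd} and the contraction $\kappa\Omega<1$) and backward summation for $v$ (kernels controlled by \refeq{zetaprod} and by $\alpha<1$). The summation bounds \refeq{chigbd-bis} and \refeq{chigbd2} then give $\|S_0\|_{L(X^\v,X^\w)} \le C$. The full operator $S(t,x)$ is recovered by Neumann series, $S = (I - S_0 P)^{-1} S_0$, where $P$ collects the couplings that were dropped; block-by-block application of Lemma~\ref{lem:R-bounds} shows $\|S_0 P\|_{L(X^\w)} = O(\g_0|\log \g_0|)$ for $\g_0$ sufficiently small, yielding existence, uniqueness, and \refeq{Stx-bound} simultaneously.

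For part (iii), I would differentiate the defining equation $y_{j+1} - D_x\Phi_j(t,x_j) y_j = r_j$ in $x$, keeping $r$ fixed. The variation $\delta y = (D_x S(t,x)\cdot \delta x)\, r$ satisfies an equation of the same form with source $j\mapsto (D_x^2\Phi_j(t,x_j)\delta x_j) y_j$ and the same boundary conditions, so applying $S(t,x)$ once more gives the closed-form identity
\begin{equation}
  (D_x S(t,x)\cdot \delta x)\, r = S(t,x)\bigl[\,j \mapsto \bigl(D_x^2 \Phi_j(t,x_j)\delta x_j\bigr)(S(t,x) r)_j\,\bigr].
\end{equation}
The bound \refeq{DStx-bound} then follows by combining \refeq{Stx-bound} with the second-derivative bound $\|D_x^2\Phi\|_{L^2(X^\w,X^\v)} \le C(\domK,\domV)$ from Lemma~\ref{lem:R-bounds}, and continuity of $D_x S$ follows from that of $D_x^2\Phi$.

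The main obstacle is the verification in (ii) that $C_S$ is genuinely independent of the domain parameters $\domK$ and $\domV$, which requires careful bookkeeping of how the asymmetric weights in \refeq{weights} interact with the forward and backward sums. In particular, the absence of $\chi_j$ from $\w_{g,j}$ (absorbing the logarithmic accumulation identified in Remark~\ref{rk:weights}) is essential for closure of the forward $g$-iteration, and the non-expansiveness of the $z$-component of $D_j^{-1}$, provided by the sign condition on $\zeta_j$ in (A2) and motivated by Example~\ref{ex:zeta-ass}, is essential for closure of the backward $v$-iteration.
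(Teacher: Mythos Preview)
Your overall architecture—forward iteration for $u=(K,g)$, backward summation for $v=(z,\mu)$, then a Neumann series to restore the dropped couplings—matches the paper's strategy. The treatment of part~(iii) via differentiating the defining equation is also essentially what the paper does (it differentiates the formula $S=(1-\Sb W)^{-1}\Sb$, which amounts to the same identity). The gap is in part~(ii): the claim $\|S_0 P\|_{L(X^\w)}=O(\g_0|\log\g_0|)$ is false for the decomposition you describe.

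Two classes of entries in your perturbation $P$ are only $O(1)$, not $o(1)$, after composition with $S_0$. First, the $g\to(z,\mu)$ couplings coming from the linearisation of $\bar\varphi$ itself (the entries $-\xit_j,\etat_j$ in \refeq{DbarPhi}) are of size $O(\chi_j\g_j)$, not $O(\chi_j\g_j^2)$. Feeding a $g$-component of size $\w_{g,l}=\hV\g_l^2|\log\g_l|$ through such an entry and then through the backward $z$-sum gives, by \refeq{chigbd-bis} with $n=3,m=1$, a contribution of order $\hV\chi_j\g_j^2|\log\g_j|=\w_{z,j}$, i.e.\ an $O(1)$ term in $\|S_0P\|$. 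Second, the $K\to V$ coupling $D_K\rho_j$ is only bounded by $M$ in (A3), so by Lemma~\ref{lem:R-bounds} it contributes $O(1)$ (not $o(1)$) to $P$ in $L(X^\w,X^\v)$; forward iteration in $g$ then produces another $O(1)$ term in $S_0P$. Hence the straight Neumann series does not converge from your estimates.

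The paper's fix is twofold. It keeps the $g\to(z,\mu)$ entries of $D\bar\varphi$ inside the approximate operator $L_j=D\bar\Phi^0_j(\x_j)$, so the base solution operator $\Sb$ (Lemma~\ref{lem:Lyf-bounds}) is lower-triangular in $(u,v)$ rather than diagonal; the $v$-formula \refeq{vcomp} already carries $B_l u_l$. The remaining perturbation $W(t,x)=D_x\Phi(t,x)-L$ then still has blocks $W_{KK}$ and $W_{VK}$ that are only $O(1)$ (from $D_K\psi$ and $D_K\rho$), so a plain Neumann series still fails. The paper instead exploits block-triangularity in the $(K,V)$ decomposition (Lemma~\ref{lem:WS02}): writing $1-\Sb W=A-B$ with $A$ block lower-triangular and invertible (since $\|W_{KK}\|\le\omega<1$ and $\|\Sb_{VV}W_{VV}\|=o(1)$) and $B$ strictly upper-triangular and small (since $\|W_{KV}\|=o(1)$), one gets $(A-B)^{-1}=(1-A^{-1}B)^{-1}A^{-1}$ with $\|A^{-1}B\|<1$. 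Your outline is missing both ingredients; once they are in place, the rest of your plan goes through.
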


Lemma~\ref{lem:DPhiyr} is supplemented
with the information about the perturbation $\rho$ given by
Lemma~\ref{lem:R-bounds}, and by the
information about the initial condition $\bar x$
provided by Lemmas~\ref{lem:greekbds}.
(Note that the sequence $\bar x$ serves as initial condition, at $t=0$, for the ODE \refeq{ODE},
not as initial condition for the flow equation \refeq{F}.)

\begin{proof}[Proof of Theorem~\ref{thm:flow}(i)]
  Let $C_S$ be the constant of Lemma~\ref{lem:DPhiyr}, fix $\delta >0$ such that
  $\b>2\delta$ and $1-b > 2\delta$, and
  define $\domV_* = C_S M/((\b-2\delta) \wedge (1-\b-2\delta))$.
  As in the statement of the theorem, assume $\domV > \domV_*$ with this value of $\domV_*$.
  For $t\in [0,1]$ and $x \in \x + \u\B$, let
  \begin{equation} \lbeq{F-def}
    F(t, x) = S(t, x )\rho(x) .
  \end{equation}
  Let $(\Kr_0,\g_0)=(K_0',g_0')$.
  Lemmas~\ref{lem:R-bounds} and \ref{lem:DPhiyr} imply
  that if $\g_0>0$ is sufficiently small then
  $F: [0,1] \times (\x + \u\B) \to X^\w$ is continuously Fr\'echet differentiable and
  \begin{equation} \lbeq{F-bound}
    \|F(t,x)\|_{X^\w}
    \leq \|S(t,x)\|_{L(X^\v,X^\w)} \|\rho(x)\|_{X^\v}
    \leq
    \frac{C_S M}{\hV} \leq (\b-2\delta) \wedge (1-\b-2\delta)
    .
  \end{equation}
  Similarly, by the product rule, there exists $C$ such that
  \begin{multline}
    \|D_xF(t,x)\|_{L(X^\w, X^\w)}
    \leq
    \|[D_xS(t,x)]\rho(x)\|_{L(X^\w, X^\w)}
    \\
    +
    \|S(t,x)[D_x\rho(x)]\|_{L(X^\w, X^\w)}
    \leq
    C
    ,
    \lbeq{DF-bound}
  \end{multline}
  and thus, in particular, $F$ is Lipschitz continuous in $x \in \x+\u\B$.

  We can now apply the standard
  local existence theory for ODE in Banach spaces, as follows.
  For $y \in \u\B$, let
  \begin{equation}
  \lbeq{Fringdef}
    \mathring{F}(t,y) = F(t, \x + y).
  \end{equation}
  Let $X^\w_0= \{ y \in X^\w: \proj_u y_0 = 0 \}$ and $\B_0 = \B \cap X^\w_0$.
  Then  Lemma~\ref{lem:DPhiyr}(i)
  and \eqref{e:F-bound} imply that
  $\mathring{F}(t,(\b-2\delta)\B_0) \subseteq \mathring{F}(t,\u\B_0) \subseteq (\b-2\delta)\B_0$.
  Let $\bar\I$ be the neighbourhood of
  $u_0$ defined by Lemma~\ref{lem:xbarderiv}
  with the same $\delta$ so that $\bar x: \bar\I \to \x + \delta \B$.
  With \eqref{e:F-bound}--\eqref{e:DF-bound},
  the local existence theory for ODEs on Banach spaces
  \cite[Chapter 2, Lemma 1]{AM78}
  implies that the initial value problem
  \begin{equation} \label{e:IVPring}
    \dot y = \mathring{F}(t, y ), \quad y(0) = \bar x(u_0)-\x
  \end{equation}
  has a unique $C^1$-solution $y : [0,1] \to X^\w_0$ % in $C^1$
  such that $y(t) \in (\b-2\delta+\delta)\B_0 = (\b-\delta)\B_0$ for all $t\in[0,1]$.
  In particular,
  \cite[Chapter 2, Lemma 1]{AM78} implies that
  the length of the existence interval of the initial value problem
  \refeq{IVPring} in $(\b-\delta)\B$ is bounded from below by
  $(\b-2\delta)/((\b-2\delta)\wedge (1-\b-2\delta)) \geq 1$ since
  $\|\mathring{F}(t,y)\| \leq (\b-2\delta) \wedge (1-\b-2\delta)$ when $\|y-y(0)\| \leq \b-2\delta$.
  It does not depend on the Lipschitz constant of $\mathring{F}$.

  As discussed around \eqref{e:ODE-F},
  it follows that $x= \x + y(1)$ is a solution to \eqref{e:flow}.
  By construction, $\proj_u x_0=\proj_u\x_0  + \proj_u y(0) = \mathring{u}_0 + (u_0 - \mathring{u_0}) = u_0$.
  Also, $\proj_v y_\infty(1) =0$ because $y(1) \in X^{\w}$, and since
  $\proj_v \x_\infty =\proj_v \bar x_\infty(u_0) =0$, it is also true that
  $\proj_v x_\infty =0$.
  Thus $x$ satisfies the required boundary conditions.

  To prove the estimates \refeq{VVbar1}--\refeq{VVbar4} for $x(u_0)$ with
  $u_0 \in \I\subseteq \bar\I$, we apply
  $\|x(u_0)-\x\|_{X^\w} \leq \b-2\delta$ and $\|\bar x(u_0)-\x\|_{X^\w} \leq \delta$
  to see that
  \begin{equation} \label{e:KKbar}
    \|K_j-\bar K_j\|_{\Wcal_j} \leq \|K_j-\Kr_j\|_{\Wcal_j} + \|\Kr_j-\bar K_j\|_{\Wcal_j}
    \leq (\b-\delta)(a-a_*) \g_j^3,
  \end{equation}
  and analogously that
  \begin{align}
    |g_j-\gbar_j| &\leq (\b -  \delta) \domV\g_j^2 |\log \g_j^2|,\\
    |z_j-\zbar_j| &\leq (\b - \delta) \domV\chi_j\g_j^2 |\log \g_j^2|,\\
    |\mu_j-\mubar_j| &\leq (\b - \delta) \domV\chi_j\g_j^2 |\log \g_j^2|.
  \end{align}
  Since $b - \delta < b$, by assuming that $|\g_0-\gbar_0|$ is sufficiently small,
  i.e., shrinking $\bar \I$ to a smaller neighbourhood $\I$ if necessary,
  we obtain with \refeq{xbar-derivatives} that
  \begin{equation} \lbeq{gringgbar}
    (b - \delta) \g_j^2 |\log \g_j^2| \leq \b \gbar_j^2 |\log \gbar_j^2|.
  \end{equation}
  The required shrinking is uniform on compact subsets of $g_0>0$.
  With the property of the neighbourhood $\bar\I$ stated below \eqref{e:barx-dg0-bd},
  this shows the assertion of Remark~\ref{rk:Nrad}.

  To prove uniqueness, suppose that $x^*$ is a solution to \eqref{e:flow}
  with boundary conditions $(K_0^*,g_0^*)=(K_0,g_0)$ and $(z^*_\infty,\mu^*_\infty)=(0,0)$
  that satisfies \eqref{e:VVbar1}--\eqref{e:VVbar4} (with $x$ replaced by $x^*$,
  and with $\bar x$ as before). Let $\x = \bar x(K_0',g_0')$ as before.
  By assumption and an argument analogous to that given around
  \eqref{e:KKbar}--\eqref{e:gringgbar}, $x^* -\x \in (\b+2\delta)\B_0$.
  It follows that
  $F: [0,1] \times (x^* + (1-\b-2\delta)\B_0) \to X^\w$ is
  Fr\'echet differentiable
  and $\|F(t,x)\|_{X^\w} \leq 1-\b-2\delta$ for all $t \in [0,1]$ and for all
  $x \in x' + (1-\b-2\delta)\B_0 \subset \x + \u\B_0$,
  as discussed around \refeq{F-def}--\refeq{DF-bound}.
  By considering the ODE backwards in time, which is equally well-posed,
  there is a unique solution $x^*(t)$ for $t \in [0,1]$ to $\dot x^* = F(t, x^*)$
  with $x^*(1)=x^*$ and $x^*(t) \subset \x+\u\B_0$.
  It follows that $x^*(0)$ is a flow of $\Phi^0=\bar\Phi$ with the same boundary conditions as
  $\x$. The uniqueness of such flows, by Lemma~\ref{lem:greekbds}, implies that $x^*(0)=\x$, and
  the uniqueness of solutions to the initial value problem \eqref{e:IVPring}
  in $\x + \u\B_0$ then also implies that
  $x = x^*$ as claimed.
  This completes the proof of Theorem~\ref{thm:flow}(i).
\end{proof}

\begin{proof}[Proof of Theorem~\ref{thm:flow}(ii)]
  By Lemma~\ref{lem:xbarderiv}, the map $\bar x: \I \subset \bar\I \to \x + \delta \B \subset X^\w$ is
  continuously Fr\'echet differentiable.
  It therefore follows from \cite[Chapter~2, Lemma~4]{AM78}
  that the solution to the initial value problem \refeq{IVPring}
  is continuously Fr\'echet differentiable in the initial condition.
  To denote the dependence of the solution on the latter,
  we write $y : [0,1] \times \I \to X^\w_0$.
  Let $x(u_0)=\x+y(1,u_0)$, as before.

  By Proposition~\ref{prop:approximate-flow},
  $\bar V_j$ is continuously differentiable in $g_0$ for each $j \in \N_0$.
  Note also that $\bar V_j$ is independent of $K_0$. It
  can be concluded from the differentiability of $\bar V_j$
  and from (A3) that $\bar K_j$ is continuously Fr\'echet differentiable in $(K_0,g_0)$.
  Together with the continuous differentiability of $y$ in the sequence space $X^\w$, this
  implies that as elements of the spaces $X_j$, each $x_j = (K_j,V_j)$ is a $C^1$ function of $u_0$.
  To prove that the derivatives of $z_0$ and $\mu_0$ with respect to $g_0$ are uniformly bounded,
  it suffices to verify this for the contributions to $x$ due to $y$, by Lemma~\ref{lem:barflowderiv}.
  To this end, observe that
  \begin{equation}
    \frac{d}{dt} (Dy)(t) =  D_x\mathring{F}(t, \bar x + y(t)) \circ Dy,  \quad   Dy(0)= {\rm id}.
  \end{equation}
  Thus, by Lemma~\ref{lem:DPhiyr} and Gronwall's inequality
  \cite[Chapter~2, Lemma~2]{AM78},
  \begin{equation}
    \left\|D_{g_0} y(t,K_0,g_0)\right\|_{X^\w}
    \leq C \left\|D_{g_0} \bar x(K_0,g_0)\right\|_{X^\w}
    .
  \end{equation}
  With Lemma~\ref{lem:xbarderiv}, this gives
  \begin{equation} \lbeq{ydu}
    \left\|D_{g_0} y(t,K_0,g_0)\right\|_{X^\w}
    \leq O(\g_0^{-2} |\log \g_0|^{-1}).
  \end{equation}
  Since $\partial\zbar_0/\partial g_0 = O(1)$ and $\partial\mubar_0/\partial g_0=O(1)$
  by Lemma~\ref{lem:barflowderiv}, it follows from \refeq{ydu} and the definition
  of the weights \refeq{weights} that
  \begin{equation}
    \ddp{z_0}{g_0} = O(1), \quad
    \ddp{\mu_0}{g_0} = O(1).
  \end{equation}
  This completes the proof of Theorem~\ref{thm:flow}(ii).
\end{proof}

\section{Proof of Lemma~\ref{lem:DPhiyr}}
\label{sec:pf-DPhiyr}

It now remains only
to prove the key
Lemma~\ref{lem:DPhiyr}.
The proof proceeds in three steps.  The first two steps concern an
approximate version of \refeq{DPhiyr}
and the solution of the
approximate equation, and the third step treats \refeq{DPhiyr} as a
small perturbation of this approximation.

\subsection{Step 1. Approximation of the linear equation}
\label{sec:step1}

Define the map $\bar\Phi_j^0 :X_j \to X_{j+1}$ by extending the map $\bar\varphi_j : \Vcal \to \Vcal$
trivially to the $K$-component, i.e.,
$\bar\Phi_j^0 = (0, \bar\varphi_j)$ in the decomposition $X_{j+1} = \Wcal_{j+1} \oplus \Vcal$.
Thus $\Phi(t, x) = \bar\Phi^0(x) + (\psi(x),t\rho(x))$.
Explicit computation of
the derivative of $\bar\varphi_j$ of \eqref{e:F},
using \eqref{e:Mmatrix}, shows that
\begin{equation}
  \lbeq{DbarPhi}
  D\bar\Phi^0_j(x_j)
  =
  \left( \begin{array}{cc|cc}
      0 & 0 & 0 & 0
      \\
      0 & 1-2\beta_j g_j &   0 & 0
      \\ \hline
      0 & - \xit_j  & 1-\zeta_j g_j & 0
      \\
      0 & \etat_j  &  \gammat_j &  \lambdat_j % \alpha_j^{-1}
    \end{array}\right)
    ,
\end{equation}
with
\begin{align}
    \etat_j & = \eta_j - 2\upsilon_j^{gg} g_j - \upsilon_j^{gz}z_j - \upsilon_j^{g\mu} \mu_j,
    \nnb
    \gammat_j & = \gamma_j - \upsilon_j^{gz}g_j - 2\upsilon_j^{zz}z_j - \upsilon_j^{z\mu}\mu_j,
    \nnb
    \lambdat_j &= \lambda_j - \upsilon_j^{g\mu}g_j -\upsilon_j^{z\mu}z_j,
    \nnb
    \xit_j &= 2\theta_j g_j + \zeta_j z_j.
    \lbeq{greektilde}
\end{align}
The block matrix structure in \eqref{e:DbarPhi} is in the decomposition $X_j = E_j \oplus F_j$
introduced in Section~\ref{sec:odepf}.
The matrix $D\bar\Phi^0_j(x_j)$ depends on $x_j \in X_j$, but
it is convenient to approximate it by the constant matrix
\begin{equation}
  \lbeq{Ldef}
  L_j
  =
  D\bar\Phi^0_j(\x_j)
  =
  \begin{pmatrix}
    A_j & 0 \\
    B_j & C_j
  \end{pmatrix}
  ,
\end{equation}
where the $2\times 2$ blocks $A_j$, $B_j$, and $C_j$ of $L_j$ are defined by
\begin{gather}
  A_j =
  \begin{pmatrix}
    0 & 0 \\
    0 & 1-2\beta_j \g_j
  \end{pmatrix}
  ,
  \;\;
  B_j =
  \begin{pmatrix}
    0 & - \xir_j \\
    0 & \etar_j
  \end{pmatrix}
  ,
  \;\;
  C_j =
  \begin{pmatrix}
    1-\zeta_j \g_j & 0 \\
    \gammar_j & \lambdar_j
  \end{pmatrix}
  \lbeq{ABC}
\end{gather}
with $\etar_j$, $\gammar_j$, $\lambdar_j$,
and $\xir_j$ as in \eqref{e:greektilde} with $x$ replaced by $\x$.
Thus we study the equation
\begin{equation} \lbeq{Lyf}
  y_{j+1} = L_j y_j + r_j,
\end{equation}
which approximates \eqref{e:DPhiyr}.
To analyse \refeq{Lyf}, and also for later purposes, we derive properties of the
matrices $A_j,B_j,C_j$ in the following lemma.

\begin{lemma}
  \label{lem:Cprodbd}
  Assume (A1--A2). Let $\alpha \in (\lambda^{-1},1)$.
    Then for $\g_0> 0$ sufficiently small (depending on $\alpha$),
    the following hold.
  \begin{enumerate}[(i)]
  \item
  Uniformly in all $l \leq j$,
  \begin{equation} \lbeq{Aprodbd}
    A_j \cdots A_l
    = \begin{pmatrix}
      0 & 0 \\
      0 & O(\g_{j+1}^2/\g_{l}^2)
    \end{pmatrix}.
  \end{equation}
  \item
  Uniformly in all $j$,
  \begin{equation} \lbeq{Bbd}
    B_j = \begin{pmatrix}
      0 & O(\chi_j\g_j) \\
      0 & O(\chi_j)
    \end{pmatrix}
    .
  \end{equation}
  \item
  Uniformly in all $l \geq j$,
  \begin{equation} \lbeq{Cprodbd}
    C_j^{-1} \cdots C_l^{-1}
    = \begin{pmatrix}
      O(1) & 0  \\
      O(\chi_j) & O(\alpha^{l-j+1})
    \end{pmatrix}
    .
  \end{equation}
\end{enumerate}
\end{lemma}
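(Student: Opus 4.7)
\medskip

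\noindent\textbf{Proof proposal for Lemma~\ref{lem:Cprodbd}.}

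The plan is to exploit the triangular structure of each of $A_j$, $B_j$, $C_j$, so that the matrix products reduce to scalar products and sums of scalar products that can be controlled by ingredients already proved in Lemma~\ref{lem:elementary-recursion} and Proposition~\ref{prop:approximate-flow}.

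For part (i), the matrix $A_j$ is diagonal with entries $0$ and $1 - 2\beta_j\g_j$, so the product $A_j \cdots A_l$ is diagonal with $(2,2)$-entry $\prod_{k=l}^j (1-2\beta_k\g_k)$. Applying \refeq{prodbd} of Lemma~\ref{lem:elementary-recursion}(iii-a) with $\gamma = 2$ gives the claimed bound $O(\g_{j+1}^2/\g_l^2)$ once $\g_0$ is small enough that the error term $c_l + O(\chi_j \g_j)$ is uniformly bounded.

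For part (ii), it suffices to show $\xir_j = O(\chi_j \g_j)$ and $\etar_j = O(\chi_j)$. Each of $\theta_j$, $\zeta_j$, $\eta_j$, $\upsilon_j^{\alpha\beta}$ is $O(\chi_j)$ by (A2), while Proposition~\ref{prop:approximate-flow} gives $\zr_j = O(\chi_j \g_j)$ and $\mur_j = O(\chi_j \g_j)$. Substituting into the expressions \eqref{e:greektilde} for $\xir_j$ and $\etar_j$ (evaluated at $x_j = \x_j$) yields the desired estimates, since $\chi_j^2 \le \chi_j$.

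For part (iii), $C_j$ is lower triangular, so $C_j^{-1}$ is also lower triangular and the product $C_j^{-1} \cdots C_l^{-1}$ inherits this structure. Its $(1,1)$ entry is $\prod_{k=j}^l (1-\zeta_k\g_k)^{-1}$, which is $O(1)$ by \refeq{zetaprod}. Its $(2,2)$ entry is $\prod_{k=j}^l \lambdar_k^{-1}$; since $\lambdar_k = \lambda_k - \upsilon_k^{g\mu}\g_k - \upsilon_k^{z\mu}\zr_k \ge \lambda - O(\g_0)$ with $\lambda > \alpha^{-1}$, for $\g_0$ small we have $\lambdar_k^{-1} \le \alpha$, giving the bound $O(\alpha^{l-j+1})$. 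For the $(2,1)$ entry, I would expand the triangular product iteratively: writing $C_k^{-1} = \bigl(\begin{smallmatrix} a_k & 0 \\ b_k & c_k \end{smallmatrix}\bigr)$ with $a_k = (1-\zeta_k\g_k)^{-1}$, $c_k = \lambdar_k^{-1}$, and $b_k = -\gammar_k/((1-\zeta_k\g_k)\lambdar_k)$, an easy induction gives
\begin{equation}
  (C_j^{-1} \cdots C_l^{-1})_{21}
  = \sum_{k=j}^l \Bigl(\prod_{i=j}^{k-1} c_i\Bigr) b_k \Bigl(\prod_{i=k+1}^l a_i\Bigr).
\end{equation}
Since $\gammar_k = O(\chi_k)$ by the argument of part (ii), we have $b_k = O(\chi_k)$; combined with $\prod c_i = O(\alpha^{k-j})$, $\prod a_i = O(1)$, and the monotonicity $\chi_k \le \chi_j$ for $k \ge j$, this gives $|(C_j^{-1}\cdots C_l^{-1})_{21}| \le C \chi_j \sum_{k\ge j} \alpha^{k-j} = O(\chi_j)$.

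The only step requiring any care is the $(2,1)$-entry bound in (iii), where one must combine geometric decay from the $c_i$ factors with the $\chi_k$-decay of $b_k$ and use monotonicity of $\chi$. Parts (i) and (ii) are essentially direct applications of already-established estimates, and the diagonal entries of the product in (iii) follow immediately from \eqref{e:zetaprod} and the uniform spectral gap $\lambdar_k > \alpha^{-1}$.
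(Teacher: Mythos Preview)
Your proposal is correct and follows essentially the same approach as the paper's proof: diagonal structure plus \refeq{prodbd} for (i), direct substitution of the (A2) and $\bar V$ bounds into \eqref{e:greektilde} for (ii), and explicit lower-triangular inversion of $C_j$ with \refeq{zetaprod} for the $(1,1)$ entry, the spectral gap $\lambdar_k^{-1}\le\alpha$ for the $(2,2)$ entry, and the expanded product formula summed against geometric decay and $\chi_k\le\chi_j$ for the $(2,1)$ entry. The only cosmetic difference is that the paper cites Lemma~\ref{lem:greekbds} rather than Proposition~\ref{prop:approximate-flow} for the $\zr_j,\mur_j$ bounds, and labels the entries of $C_j^{-1}$ differently in its product formula.
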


\begin{proof}
(i) It follows immediately from \refeq{ABC} that
\begin{equation} \lbeq{Aprod}
  A_{j} \cdots A_{l} = \prod_{k=l}^{j}(1-2\beta_k \g_k) \proj_g,
\end{equation}
and thus \refeq{prodbd} implies (i).

\smallskip \noindent
(ii) It follows directly from \eqref{e:ABC}
and Lemma~\ref{lem:greekbds} that \eqref{e:Bbd} holds.

\smallskip \noindent
(iii) Note that
\begin{equation}
  \begin{pmatrix}
    c_1 & 0 \\
    b_1 & a_1
  \end{pmatrix}
  \cdots
  \begin{pmatrix}
    c_n & 0 \\
    b_n & a_n
  \end{pmatrix}
  =
  \begin{pmatrix}
    c^* & 0 \\
    b^* &  a^*
  \end{pmatrix}
\end{equation}
with
\begin{equation}
  a^* = a_1 \cdots a_n
  ,
  \quad
  b^* = \sum_{i=1}^n a_1 \cdots a_{i-1} b_{i} c_{i+1} \cdots c_n
  ,
  \quad
  c^* = c_1 \cdots c_n
  .
\end{equation}
We apply this formula with the inverse matrices
\begin{equation}
  C_j^{-1} =
  \begin{pmatrix}
    (1-\zeta_j \g_j)^{-1} & 0 \\
    -(1- \zeta_j \g_j)^{-1} \gammar_j \alr_j & \alr_j
  \end{pmatrix}
\end{equation}
where $\alr_j = \lambdar_j^{-1}$. Thus
\begin{equation}
  C_j^{-1} \cdots C_l^{-1}
  =
  \begin{pmatrix}
    \taur_{j,l} & 0  \\
    \sigmar_{j,l} & \alr_{j,l}
  \end{pmatrix}
\end{equation}
with
\begin{gather}
  \alr_{j,l}
  = \alr_j \cdots \alr_l,
  \qquad
  \taur_{j,l} = \prod_{k=j}^l (1-\zeta_k\g_k)^{-1},
  \\
  \sigmar_{j,l}
  = \sum_{i=1}^{l-j+1} \left( \prod_{k=j+i}^{l} (1-\zeta_k \g_k)^{-1} \right)
  (-\gammar_{j+i-1}) \left( \prod_{k=j}^{j+i-2} \alr_{k} \right)
  .
\end{gather}
The product defining $\taur_{j,l}$ is $O(1)$ by \refeq{zetaprod}.
Assume that $\g_0$ is sufficiently small that, with Lemma~\ref{lem:greekbds} and (A2), $\alr_m < \alpha$ for all $m$. Then
$\alr_{j,l} \le O(\alpha^{l-j+1})$.
Similarly, since $\gammar_m \le O(\chi_m)$,
\begin{equation}
\lbeq{pibarbd}
  |\sigmar_{j,l}|
  \leq  \sum_{i=1}^{l-j+1} \alpha^{i}  O(\chi_{j+i-1})
  \leq O(\chi_j)
  .
\end{equation}
This completes the proof.
\end{proof}

The following lemma provides a solution to \refeq{Lyf}.

\begin{lemma}
  \label{lem:fp}
  Assume (A1--A2) and that $\g_0 > 0$ is sufficiently small.
  We write $y$ as a column vector $y=(u,v)$.
  Then
  \begin{align}
  \lbeq{ucomp}
    u_j & = \sum_{l=0}^{j-1} A_{j-1} \cdots A_{l+1}\proj_ur_l
    \\
  \lbeq{vcomp}
    v_j & =
    - \sum_{l=j}^{\infty} C_{j}^{-1} \cdots C_{l}^{-1} (B_lu_l + \proj_vr_l)
  \end{align}
  is the unique solution to \refeq{Lyf}  which obeys the boundary
  conditions $u_0=v_\infty=0$ and for which the series \refeq{vcomp} converges.
\end{lemma}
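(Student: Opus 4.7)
The plan is to exploit the block-triangular structure of $L_j$ to decouple \refeq{Lyf} into a forward recursion for $u$ and a backward recursion for $v$, then verify the closed-form solutions \refeq{ucomp} and \refeq{vcomp} by direct substitution. Writing \refeq{Lyf} in block form with respect to the decomposition $X_j = E_j \oplus F_j$ gives
\begin{equation*}
  u_{j+1} = A_j u_j + \proj_u r_j,
  \qquad
  v_{j+1} = B_j u_j + C_j v_j + \proj_v r_j.
\end{equation*}
The first equation does not involve $v$, so starting from the initial condition $u_0 = 0$ and iterating forward immediately produces \refeq{ucomp}; uniqueness of $u$ is automatic by the forward iteration.

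Next I would solve the $v$-equation backwards. Since $C_j$ is lower triangular with diagonal entries $1-\zeta_j\g_j$ and $\lambda_j$, both bounded away from $0$ for $\g_0$ small (using (A1--A2) and Lemma~\ref{lem:greekbds}), $C_j^{-1}$ exists. Rewriting the recursion as $v_j = C_j^{-1} v_{j+1} - C_j^{-1}(B_j u_j + \proj_v r_j)$ and iterating from $j$ up to $j+n$ gives, after telescoping,
\begin{equation*}
  v_j = C_j^{-1}\cdots C_{j+n}^{-1} v_{j+n+1}
  - \sum_{l=j}^{j+n} C_j^{-1}\cdots C_l^{-1}(B_l u_l + \proj_v r_l).
\end{equation*}
Letting $n \to \infty$, the boundary term vanishes because of the geometric decay of backward products of $C_k^{-1}$ in the expanding $\mu$-direction, supplied by Lemma~\ref{lem:Cprodbd}(iii), combined with the boundary condition $v_\infty = 0$ and the uniform control on $v$ coming from the hypothesis that the series \refeq{vcomp} converges. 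This yields \refeq{vcomp}, and a direct substitution then confirms that \refeq{ucomp}--\refeq{vcomp} solve \refeq{Lyf}.

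For uniqueness, with $u$ already pinned down, the difference $w$ of any two candidate $v$-solutions would satisfy the homogeneous recursion $w_{j+1} = C_j w_j$, so $w_j = C_{j-1}\cdots C_0 w_0$. The forward product $C_{j-1}\cdots C_0$ is lower triangular with $(\mu,\mu)$-entry $\prod_{k<j}\lambda_k \ge \lambda^j$ (growing by (A2)) and $(z,z)$-entry $\prod_{k<j}(1-\zeta_k \g_k)$ which is bounded below away from zero by Lemma~\ref{lem:elementary-recursion}(iii-b). Thus the condition $w_j \to 0$ forces $w_0 = 0$, giving uniqueness.

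The computations themselves are short — the only real point to watch is the vanishing of the boundary term $C_j^{-1}\cdots C_{j+n}^{-1} v_{j+n+1}$ in the backward iteration. That is where the key input from Lemma~\ref{lem:Cprodbd}(iii) enters, and it is also what justifies why the convergence of the series \refeq{vcomp} is the natural selection principle for the unique solution. Everything else follows by the block-triangular structure and forward/backward iteration.
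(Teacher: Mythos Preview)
Your approach is essentially the same as the paper's: decouple via the block-triangular structure of $L_j$, forward-iterate for $u$, backward-iterate for $v$, and pass to the limit. Two points deserve tightening.

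First, your justification for why the boundary term $C_j^{-1}\cdots C_{j+n}^{-1}v_{j+n+1}$ vanishes is imprecise. You invoke ``geometric decay of backward products of $C_k^{-1}$ in the expanding $\mu$-direction,'' but only the $(\mu,\mu)$-entry of the product decays; the $(z,z)$-entry is merely $O(1)$ by Lemma~\ref{lem:Cprodbd}(iii), not decaying. The correct (and simpler) argument, which is what the paper does, is that Lemma~\ref{lem:Cprodbd}(iii) gives \emph{uniform boundedness} of $C_j^{-1}\cdots C_l^{-1}$ in $l$, and since $v_{l+1}\to 0$ by the hypothesis $v_\infty=0$, the boundary term tends to $0$. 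Your appeal to ``uniform control on $v$ coming from the hypothesis that the series \refeq{vcomp} converges'' is circular: convergence of the series is a conclusion at this stage, not an input.

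Second, once the boundary term vanishes you have already shown that \emph{every} solution with $v_\infty=0$ is given by \refeq{vcomp}, which is exactly uniqueness. Your separate argument via the homogeneous recursion $w_{j+1}=C_j w_j$ is correct but redundant; the paper omits it.
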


The lemma indeed solves \refeq{Lyf}:
given $r$ we first obtain $u$ via \refeq{ucomp}
and then insert $u$ into \refeq{vcomp} to obtain $v$.
The empty product in \refeq{ucomp}
is interpreted as the identity,
so the term in the sum corresponding to $l=j-1$ is
simply $\proj_u r_{j-1}$.

\begin{proof}[Proof of Lemma~\ref{lem:fp}.]
The $u$-component of \refeq{Lyf} is given by
\begin{equation}
  u_{j+1} = A_j u_j + \proj_ur_j.
\end{equation}
By induction, under the initial condition $u_0=0$ this recursion
is equivalent to \refeq{ucomp}.

The $v$-component of \refeq{Lyf} states that
\begin{equation}
  v_{j+1}  = B_ju_j + C_jv_j + \proj_v r_j,
\end{equation}
which is equivalent to
\begin{equation}
  v_j = C_j^{-1} v_{j+1} - C_j^{-1}B_ju_j -C_j^{-1} \proj_v r_j.
\end{equation}
By induction, for any $k \geq j$, the latter is equivalent to
\begin{equation} \lbeq{v-inteq-nolim}
  v_j
  =
  C_j^{-1} \cdots C_{k}^{-1}v_{k+1}
  - \sum_{l=j}^{k} C_{j}^{-1} \cdots C_{l}^{-1} (B_lu_l + \proj_vr_l).
\end{equation}
By Lemma~\ref{lem:Cprodbd}(iii),
with some $\alpha \in (\lambda^{-1}, 1)$
and with $\g_0$ sufficiently small,
$\|C_0^{-1}\cdots C_k^{-1}\|$ is uniformly bounded.
Thus, if $y_j = (u_j,v_j)$ satisfies \eqref{e:Lyf} and $v_j \to 0$,
then $C_0^{-1}\cdots C_k^{-1}v_{k+1} \to 0$
and hence
\begin{equation} \lbeq{v-inteq-lim}
  v_j
  =
  - \sum_{l=j}^{\infty} C_{j}^{-1} \cdots C_{l}^{-1} (B_lu_l + \proj_vr_l),
\end{equation}
which is \refeq{vcomp}.
\end{proof}

\subsection{Step 2. Solution operator for the approximate equation}

We now prove existence, uniqueness, and bounds for the solution to
the approximate equation \eqref{e:Lyf}.

\begin{lemma}
\label{lem:Lyf-bounds}
  Assume (A1--A2) and that $\g_0 > 0$ is sufficiently small.
  For each $r \in X^{\v}$ and $x \in \x + \u\B$, there exists a unique solution
  $y = (u,v) = \Sb r \in X^{\w}$
  to \eqref{e:Lyf} obeying the boundary conditions
  $\proj_u y_0 = 0$, $\proj_v y_\infty = 0$.
  The solution operator $\Sb$ is block diagonal in the decomposition $x=(K,V)$, with
    \begin{equation} \lbeq{Sdiag}
      \Sb = \begin{pmatrix}
        1 & 0 \\
        0 & \Sb_{VV}
      \end{pmatrix}.
    \end{equation}
    There is a constant $C_{\Sb}>0$, such that, uniformly in small $\g_0$,
    \begin{equation} \lbeq{betterbds}
      \|\Sb_{VV}\|_{L(X^{\v}, X^{\w})}
      \leq C_{\Sb}.
    \end{equation}
    The constant $C_{\Sb}$ is independent of the parameters $\domK,\domV$
    which define the domain $D_j$ in \refeq{Djdef}.
\end{lemma}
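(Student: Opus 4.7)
The plan is to apply Lemma~\ref{lem:fp}, which already provides the explicit formulas \refeq{ucomp}--\refeq{vcomp} for the unique solution to \refeq{Lyf} with boundary conditions $\proj_u y_0 = 0$ and $\proj_v y_\infty = 0$ (the latter being automatic for $y \in X^\w$ since every weight $\w_{\alpha,j}$ with $\alpha = z, \mu$ tends to $0$ as $j\to\infty$). The real work is to verify that the resulting sequence lies in $X^\w$ with a bound on the $V$-block independent of $\hK$ and $\hV$.

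The block diagonality of $\Sb$ is a direct consequence of the structural zeros in \refeq{ABC}--\refeq{Bbd}: the matrix $A_j$ kills the $K$-component, so in \refeq{ucomp} the only term surviving in $\proj_K u_j$ is the empty-product one, giving $\proj_K u_j = \proj_K r_{j-1}$; and the first column of $B_j$ vanishes, so $B_l u_l$ in \refeq{vcomp} depends only on $\proj_g u_l$, hence $v$ depends only on $(\proj_g r, \proj_v r)$. Since $\w_{K,j} = \v_{K,j}$, the $K$-to-$K$ block reduces to the identity entry in \refeq{Sdiag}.

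For the three components of $\Sb_{VV}$, I treat each in turn using $|\proj_\alpha r_l| \leq \v_{\alpha,l}\|r\|_{X^\v}$. The $g$-component uses \refeq{Aprodbd}: summing $|A_{j-1}\cdots A_{l+1}\proj_g r_l|$ and applying \refeq{chigbd-bis} with $n=1$ yields
\[
   |g_j| \leq O(\g_j^2)\sum_{l<j}\chi_l\g_l\cdot\hV\|r\|_{X^\v}
   \leq O(\hV\g_j^2|\log\g_j|)\|r\|_{X^\v},
\]
and dividing by $\w_{g,j}$ cancels the $\hV$. The $z$-component combines the first row of \refeq{Cprodbd} with \refeq{Bbd} and the bound for $g_l$ just obtained, producing summand of order $\hV\chi_l\g_l^3|\log\g_l|\|r\|_{X^\v}$; \refeq{chigbd-bis} with $n=3$ then gives $|z_j|\leq O(\hV\chi_j\g_j^2|\log\g_j|)\|r\|_{X^\v}$.

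The delicate case, and the main obstacle, is $\mu_j$. The second row of \refeq{Cprodbd} contributes an $O(\chi_j)$ term (which I handle exactly as for $z_j$) together with an $O(\alpha^{l-j+1})$ term that, combined with $(B_l u_l)_\mu = O(\chi_l) g_l$, forces me to estimate a sum of the form $\sum_{l\geq j}\alpha^{l-j+1}\chi_l\g_l^2|\log\g_l|$. To control this I use that $t\mapsto t^2|\log t|$ is increasing for small $t$, so $\g_l^2|\log\g_l|\leq O(\g_j^2|\log\g_j|)$ for $l\geq j$ by \refeq{gbarmon}, together with
\[
  \sum_{l\geq j}\alpha^{l-j+1}\chi_l \leq O(\chi_j),
\]
which I verify by splitting the sum at $\jm$ in the two cases $j\leq\jm$ and $j>\jm$, using $\chi_l = 1$ for $l\leq\jm$ and $\chi_l = \Omega^{-(l-\jm)}$ for $l\geq\jm$ together with $\alpha < 1$. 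Combining yields $|\mu_j|\leq O(\hV\chi_j\g_j^2|\log\g_j|)\|r\|_{X^\v}$, and dividing by $\w_{\mu,j}$ again cancels $\hV$. In every case the bound scales linearly with $\hV$, which cancels against the $\hV$ in the corresponding weight, so the resulting constant $C_\Sb$ depends only on the constants from (A1)--(A2) and on $\Omega$, $\lambda$, and $\alpha$, and in particular is independent of $\hK$ and $\hV$.
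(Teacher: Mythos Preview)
Your proposal is correct and follows essentially the same approach as the paper: invoke Lemma~\ref{lem:fp} for the explicit solution, read off block-diagonality from the zeros in $A_j$ and $B_j$, and bound each of the $K,g,z,\mu$ components separately using Lemma~\ref{lem:Cprodbd} together with \refeq{chigbd-bis} and the geometric sum $\sum_{l\ge j}\alpha^{l-j+1}\chi_l = O(\chi_j)$. The only cosmetic point is that $\proj_K y_j = \proj_K r_{j-1}$ is a shift rather than literally the identity, but this matches the paper's own notation in \refeq{Sdiag} and the bound $\w_{K,j}^{-1}\v_{K,j-1} = O(1)$ follows from \refeq{gbarmon}.
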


\begin{proof}
By Lemma~\ref{lem:fp}, it suffices to prove that
the map $r \mapsto y$ defined by \refeq{ucomp}--\refeq{vcomp} gives
a bounded map $\Sb : X^\v \to X^\w$.
For this we use
Lemma~\ref{lem:elementary-recursion}(ii),
from which we recall that for all $k \ge j \ge 0$ and $m \ge 0$,
\begin{equation}
  \lbeq{chigbd}
  \sum_{l=j}^k \chi_l \g_l^n |\log \g_l|^m \leq
  C_{n,m}
  \begin{cases}
    |\log \g_k|^{m+1}  & n = 1
    \\
    \chi_j \g_j^{n-1} |\log \g_j|^m  & n > 1.
  \end{cases}
\end{equation}

\smallskip
\noindent (i) $K$-component.
Since $\proj_K A_l = 0$, we have $\proj_K u_j = \proj_K r_{j-1}$.  Therefore,
by \refeq{weights} and \refeq{gbarmon},
\begin{equation} \lbeq{TK-bound}
  \|\proj_K y\|_{X^\w}
  \leq
  \sup_j \|\proj_K r_{j-1}\|_{X^\w_j}
  \leq
  \sup_j \big[ \w_{K,j}^{-1} \v^{\vphantom{-1}}_{K,j-1} \big] \|r\|_{X^\v}
  \le 2 \|r\|_{X^\v}.
\end{equation}

\smallskip
\noindent (ii) $g$-component.
By Lemma~\ref{lem:Cprodbd}(i), \refeq{weights}, \refeq{gbarmon}, and \refeq{chigbd},
\begin{align} \lbeq{Tg-bound}
  \|\proj_g y\|_{X^\w}
  &\leq
  \sup_j \w_{g,j}^{-1}\sum_{l=0}^{j-1} |\proj_g A_{j-1} \cdots A_{l+1} \pi_u r_l|
  \nnb & \le
  \sup_j \w_{g,j}^{-1}\sum_{l=0}^{j-1}  \v_{g,l} O(\g_{j}/\g_{l})^2  \|r\|_{X^\v}
  \nnb
  &
  \leq
  c \|r\|_{X^\v} \sup_j |\log \g_j|^{-1}   \sum_{l=0}^{j-1} \chi_l \g_l
  \leq
  c \|r\|_{X^\v}
  .
\end{align}

\smallskip
\noindent (iii) $z$-component.
By \refeq{Bbd}--\refeq{Cprodbd}, \refeq{Tg-bound},
\eqref{e:weights}, and \eqref{e:chigbd},
\begin{align}
  \|\proj_z y\|_{X^\w}
  &\leq
  \sup_j \w_{z,j}^{-1} \sum_{l=j}^\infty |\proj_z C_j^{-1} \cdots C_l^{-1}
  (B_lu_l + \pi_v r_l)|
  \nnb
  &\leq
  \sup_j %\h_V
  \w_{z,j}^{-1}
  \sum_{l=j}^\infty
  O(1)\left(\chi_l\g_l \w_{g,l}\|r\|_{X^\v} + \chi_l\v_{z,l} \|r\|_{X^\v}
  \right)
  \leq
  c \|r\|_{X^\v}
  .
\lbeq{Tz-bound}
\end{align}

\smallskip
\noindent (iv) $\mu$-component.
We begin with
\begin{align} \lbeq{Tmu-bound}
  \|\proj_\mu y\|_{X^\w}
  &\leq \sup_j
  \w_{\mu,j}^{-1}
  \sum_{l=j}^\infty |\proj_\mu C_j^{-1} \cdots C_l^{-1} (B_lu_l + \pi_v r_l) |.
\end{align}
It is an exercise in matrix algebra, using \refeq{Bbd}--\refeq{Cprodbd}
and \refeq{Tg-bound}, to see
that
\begin{align}
    |\proj_\mu C_j^{-1} \cdots C_l^{-1} B_lu_l |
    & \le
    O(1)\left(  \chi_l \g_l + \alpha^{l-j+1} \right) \w_{g,l} \|r\|_{X^{\v}}
    ,
    \\
    |\proj_\mu C_j^{-1} \cdots C_l^{-1}  \pi_v r_l |
    & \le
     O(1)\left( \chi_j \v_{z,l} + \alpha^{l-j+1} \v_{\mu,l} \right)  \|r\|_{X^{\v}}
     .
\end{align}
Now we use \refeq{weights}, \refeq{chigbd}, and also
\begin{equation}
    \sum_{l=j}^\infty \alpha^{l+1-j}
    \chi_l \g_l^n |\log \g_l|^m  \le O( \chi_j \g_j^n |\log \g_l|^m),
\end{equation}
to conclude that
$\|\proj_\mu y\|_{X^\w} \le c \|r\|_{X^\v}$.
This completes the proof.
\end{proof}

\subsection{Step 3. Solution of the linear equation}

Now we prove Lemma~\ref{lem:DPhiyr}, which involves solving
the equation \refeq{DPhiyr} and estimating its solution operator.
In preparation, we make some definitions and prove two preliminary
lemmas.

We rewrite
\eqref{e:DPhiyr} as
\begin{equation}
\lbeq{Wrewrite}
  y_{j+1} = D_x\Phi_j(t, x_j)y_j + r_j = L_jy_j + W_j(t,x_j)y_j + r_j,
\end{equation}
where
\begin{align}
  W_j(t,x_j)
  &= D_x\Phi_j(t, x_j)-L_j.
\end{align}
It is convenient to define an operator $W(t,x)$ on
sequences via $(W(t,x))_0 = 0$ and $(W(t,x))_{j+1} = W_j(t,x)$.
This operator can be written as a block matrix
with respect to the decomposition $x=(K,V)$ as
\begin{equation}
  W(t,x) = \begin{pmatrix}
    W_{KK} & W_{KV} \\
    W_{VK} & W_{VV}
  \end{pmatrix},
\end{equation}
with $W_{\alpha\beta} = \pi_\alpha W(t,x) \pi_\beta$.

\begin{lemma}
\label{lem:WS0}
Fix $\omega \in (\kappa\Omega,1)$.
The map $W$ obeys $W: [0,1] \times (\x + \u\B) \to L(X^{\w}, X^{\v})$,
$W$ is continuously Fr\'echet differentiable,
and  if $x \in \x + \u\B$ then
\begin{gather}
\lbeq{W0-bound}
  \|W_{KK}\|_{L(X^{\w}, X^{\v})} \leq \omega,\quad
  \|W_{VK}\|_{L(X^{\w}, X^{\v})} \leq  C,
  \\
  \|W_{KV}\|_{L(X^{\w}, X^{\v})} \leq o(1), \quad
  \|W_{VV}\|_{L(X^{\w}, X^{\v})} \leq o(1),
  \quad \text{as $\g_0 \to 0$}
  ,
  \lbeq{W1-bound}
  \\
  \|D_{x}W_j(t, x_j)\|_{L(X_j^{\w},L(X_j^{\w}, X_{j+1}^{\v}))}
  \leq
  C.
  \lbeq{W2-bound}
\end{gather}
\end{lemma}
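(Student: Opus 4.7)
\medskip

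\noindent\textbf{Proof plan.}
The plan is to decompose $W(t,x) = D_x\Phi(t,x) - L$ block-wise with respect to $X = \Wcal \oplus \Vcal$ and to reduce each block either to Lemma~\ref{lem:R-bounds} or to an explicit calculation based on \refeq{DbarPhi}--\refeq{greektilde}. Using $\Phi(t,x) = (\psi(x), \bar\varphi(V) + t\rho(x))$ and $L = \mathrm{diag}(0, D_V\bar\varphi(\x_V))$, I would write
\begin{equation}
  W(t,x)
  = \begin{pmatrix}
    D_K\psi(x) & D_V\psi(x) \\
    t\, D_K\rho(x) & [D_V\bar\varphi(V) - D_V\bar\varphi(\x_V)] + t\, D_V\rho(x)
  \end{pmatrix}.
\end{equation}
The four blocks involving $D\psi$ and $D\rho$ are bounded directly by \refeq{Rw1-bound} in Lemma~\ref{lem:R-bounds}: the $\psi$-$K$ block gives the $\omega$-bound in \refeq{W0-bound} (since $\omega > \kappa\Omega$); the $\rho$-$K$ block gives the $C$-bound in \refeq{W0-bound} (with $|t|\le 1$); and the $\psi$-$V$ block together with the $t D_V\rho$ summand in the lower-right block supplies an $O(\g_0|\log\g_0|) = o(1)$ contribution toward \refeq{W1-bound}.

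The remaining piece is the quadratic-difference block $D_V\bar\varphi_j(V_j) - D_V\bar\varphi_j(\x_{V,j})$. Since $\bar\varphi_j$ is quadratic, this difference is \emph{linear} in $V_j - \x_{V,j}$, with entries given by the explicit formulas in \refeq{DbarPhi}--\refeq{greektilde} (coefficients from $\beta_j$, $\theta_j$, $\zeta_j$, $\eta_j$, $\gamma_j$, $\upsilon_j^{\alpha\beta}$). For each of the nine matrix entries I would check that
\begin{equation}
  \sup_{j} \v_{\alpha,j+1}^{-1}\,|W_j^{\alpha\beta}|\,\w_{\beta,j} = o(1)
  \quad \text{as } \g_0 \to 0,
\end{equation}
using (a) the hypothesis $x \in \x + \u\B$, which yields $|V_{\alpha,j} - \x_{\alpha,j}| \leq \w_{\alpha,j}$; (b) Assumption (A2), which gives $|\eta_j|,|\gamma_j|,|\theta_j|,|\zeta_j|,|\upsilon_j^{\alpha\beta}| = O(\chi_j)$; (c) the definition of $\jm$, which implies $|\beta_j| \le \chi_j\betamax$; and (d) Lemma~\ref{lem:elementary-recursion}(i), which provides $\g_j/\g_{j+1} = 1 + O(\g_0)$ and $\chi_j/\chi_{j+1} \le \Omega$. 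For example, the $(g,g)$ entry becomes $|2\beta_j(g_j-\g_j)| \le 2\chi_j\betamax \w_{g,j}$, which after division by $\v_{g,j+1}$ and multiplication by $\w_{g,j}$ is $O(\chi_j\chi_{j+1}^{-1}\, \g_j|\log\g_j|^2) = O(\g_0|\log\g_0|^2)$, and the other eight entries reduce to analogous weight ratios, each of which is either identically zero or gains an additional small factor from $\chi_j$ or $\g_j$.

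For the differentiability statement and the bound \refeq{W2-bound}, I would use that $L$ is independent of $x$, so $D_x W(t,x) = D_x^2\Phi(t,x)$. The contributions involving $D^2\psi$ and $D^2\rho$ are uniformly bounded by Lemma~\ref{lem:R-bounds} (from the bound on $\|D_x^2\phi\|_{L^2(X^\w,X^\v)}$), while $D_V^2\bar\varphi_j$ is a \emph{constant} symmetric bilinear form with entries $2q_j^\alpha$; the weighted bound on this constant piece is a slight variant of the calculation carried out for $W_{VV}$ itself, and again yields a uniform $O(1)$ bound. Continuous differentiability then follows from the continuity statements already established in Lemma~\ref{lem:R-bounds}. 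The \emph{main obstacle} is the entry-by-entry verification of $\|W_{VV}\|_{L(X^\w,X^\v)} = o(1)$: because the weights $\w$ and $\v$ differ in their factors of $\chi_j$ (and because $\w_{g,j}$ carries no $\chi_j$ at all), each of the nine entries requires separate accounting, and the bound $|\beta_j|\le \chi_j\betamax$ built into the definition of $\jm$ is essential --- without it, the $(g,g)$ block would be unbounded after the cut-off time.
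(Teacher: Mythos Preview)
Your proposal is correct and follows essentially the same route as the paper: decompose $W$ into the perturbation part $D_x(\psi,t\rho)$, handled entirely by Lemma~\ref{lem:R-bounds}, and the quadratic-difference part $D\bar\Phi^0(x)-D\bar\Phi^0(\x)$, handled using that the second derivative of $\bar\varphi$ is constant with entries $O(\chi_j)$ (the observation $|\beta_j|\le\chi_j\betamax$ being exactly what is needed here). The only cosmetic difference is that the paper compresses your nine entry-by-entry checks into a single estimate $c\,\chi_j\,\v_{g,j+1}^{-1}\w_{g,j}^2 = O(\hV\g_0|\log\g_0|^2)$ by taking the maximal input weight $\w_{g,j}$ and the minimal output weight $\v_{g,j+1}$, which is the same computation you work out for the $(g,g)$ entry.
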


\begin{proof}
By definition,
\begin{align}
  \lbeq{WDL}
  W_j(t,x_j)
  &=
  [D_x\bar\Phi^0_j(x_j)-D_x\bar\Phi^0_j(\x)]
  + D_x (\psi_j(x_j),t\rho_j(x_j)).
\end{align}The first term on the right-hand side of \refeq{WDL}
only depends on the $V$-components, and
is continuously Fr\'echet differentiable
since, by \refeq{DbarPhi},
$D^2\bar\Phi_j^0$ is a constant matrix for each $j$ with coefficients
bounded by $O(\chi_j)$.
Therefore, for $x \in \x+\u\B$ (with weights chosen maximally),
\begin{align}
  \|[D\bar\Phi^0_j(\x_j) - D\bar\Phi^0_j(x_j)]\proj_V\|_{L(X_j^\w,X_{j+1}^\v)}
  &\leq c  \chi_j  \v_{g,j+1}^{-1}\w_{g,j}^2 \|\x_j - x_j\|_{X^\w_j}
  \nnb
  &
  = O(\hV\u\g_0|\log \g_0|^2).
\end{align}
This contributes to the bounds
\eqref{e:W1-bound}, with $\g_0$ taken small enough.

Lemma~\ref{lem:R-bounds} gives bounds on
the second term on the right-hand side of \refeq{WDL}, as well as its
derivative, and with these
the proof of \refeq{W2-bound} is complete.
\end{proof}

\begin{lemma}
\label{lem:WS02}
For $x \in \x + \u\B$, the map $1-\Sb W(t,x)$ has a bounded inverse
in $L(X^\w,X^\w)$.
\end{lemma}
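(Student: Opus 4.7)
The plan is to exploit the block-diagonal form of $\bar S$ from Lemma~\ref{lem:Lyf-bounds} together with the block bounds on $W$ from Lemma~\ref{lem:WS0}, and to invert $1-\bar S W$ by a Schur complement (block LU) decomposition rather than by a direct Neumann series on $\bar S W$ itself. The issue with a direct Neumann series is that $\bar S_{VV}W_{VK}$ is only bounded and not small: only the blocks $W_{KK}$, $W_{KV}$, $W_{VV}$ carry the smallness (from $\omega<1$ and from the $o(1)$ bounds as $\g_0\to 0$), while $W_{VK}$ is merely $O(1)$.

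Using $\bar S=\mathrm{diag}(1,\bar S_{VV})$ and writing $X^\w=X^\w_K\oplus X^\w_V$, I would write
\begin{equation*}
1-\bar S W
=
\begin{pmatrix}
1-W_{KK} & -W_{KV} \\
-\bar S_{VV}W_{VK} & 1-\bar S_{VV}W_{VV}
\end{pmatrix}.
\end{equation*}
Since the $K$-weights in $\w$ and $\v$ agree, the bound $\|W_{KK}\|_{L(X^\w,X^\v)}\le\omega<1$ from \eqref{e:W0-bound} gives $\|W_{KK}\|_{L(X^\w_K,X^\w_K)}\le\omega$, and hence $(1-W_{KK})^{-1}$ exists on $X^\w_K$ by a Neumann series with norm at most $(1-\omega)^{-1}$.

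Next I would perform the block LU factorisation
\begin{equation*}
1-\bar S W =
\begin{pmatrix} 1 & 0 \\ -\bar S_{VV}W_{VK}(1-W_{KK})^{-1} & 1 \end{pmatrix}
\begin{pmatrix} 1-W_{KK} & -W_{KV} \\ 0 & S_c \end{pmatrix},
\end{equation*}
with Schur complement
\begin{equation*}
S_c = 1 - \bar S_{VV}W_{VV} - \bar S_{VV}W_{VK}(1-W_{KK})^{-1}W_{KV}.
\end{equation*}
The first (unit-triangular) factor is invertible with a bounded inverse of the same triangular form, using $\|\bar S_{VV}\|\le C_{\bar S}$, $\|W_{VK}\|\le C$ and $\|(1-W_{KK})^{-1}\|\le(1-\omega)^{-1}$. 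The second factor is upper block triangular, so it is invertible iff both diagonal blocks are; the $(1,1)$ block has already been handled, so only $S_c$ remains.

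For $S_c$, by Lemma~\ref{lem:WS0} and Lemma~\ref{lem:Lyf-bounds},
\begin{equation*}
\|\bar S_{VV}W_{VV}\|_{L(X^\w_V,X^\w_V)}\le C_{\bar S}\cdot o(1),
\quad
\|\bar S_{VV}W_{VK}(1-W_{KK})^{-1}W_{KV}\|_{L(X^\w_V,X^\w_V)}\le \frac{C_{\bar S}\,C}{1-\omega}\cdot o(1),
\end{equation*}
both as $\g_0\to 0$. Hence for $\g_0$ sufficiently small, $\|S_c-1\|<1$ and $S_c^{-1}$ exists by a Neumann series. Multiplying the (bounded) inverses of the two factors gives a bounded inverse of $1-\bar S W$ on $X^\w$. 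The only slightly delicate point, which is already built into the hypotheses via the choice $\omega\in(\kappa\Omega,1)$, is that the contraction constant $\omega$ on the $K$-block is bounded away from $1$ uniformly in $\g_0$; everything else follows mechanically from the block bounds once this is in hand.
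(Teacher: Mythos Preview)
Your proof is correct and follows essentially the same approach as the paper. The paper writes $1-\Sb W = A-B$ with $A$ the lower block-triangular part (containing the merely bounded $\Sb_{VV}W_{VK}$) and $B$ the strictly upper part (containing only the small $W_{KV}$), inverts $A$ explicitly, and then uses a Neumann series on $A^{-1}B$; your block LU/Schur complement factorisation is an equivalent rearrangement of the same algebra, exploiting the same smallness pattern (namely $\|W_{KK}\|\le\omega<1$ and $W_{KV},W_{VV}=o(1)$, while $W_{VK}$ is only $O(1)$).
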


\begin{proof}
  As in \refeq{Sdiag}, we write $\Sb$ as a block matrix
  with respect to the decomposition $x=(K,V)$ as
  \begin{equation}
    \Sb = \begin{pmatrix}
      1 & 0 \\
      0 & \Sb_{VV}
    \end{pmatrix}
    .
  \end{equation}
  By definition, $1-\Sb W(t,x) = A-B$ with
  \begin{equation}
  \lbeq{ABmatrices}
    A =
    \begin{pmatrix}
      1-W_{KK} & 0 \\
      -\Sb_{VV} W_{VK} & 1-\Sb_{VV} W_{VV}
    \end{pmatrix},
    \quad
    B = \begin{pmatrix}
      0 & W_{KV} \\ 0 & 0
    \end{pmatrix}
    .
  \end{equation}
  For any bounded operators
  $A,B$ on a Banach space, with
  $A^{-1}$ bounded and $\|A^{-1}B\| < 1$,
  the operator $A-B= A(1-A^{-1}B)$ has a bounded inverse since $(1-A^{-1}B)^{-1}$ is
  given by its Neumann series.  Thus it suffices to prove that
  the matrices $A,B$ defined in \refeq{ABmatrices} have these two properties.

  By \refeq{W0-bound}--\refeq{W1-bound} (with $\g_0$ sufficiently small),
  $\|W_{KK}\|_{L(X^\w,X^\w)} < 1$ and $\|\Sb_{VV} W_{VV}\|_{L(X^\w,X^\w)} < 1$.
  Thus $A$ is a block matrix of the form
  \begin{equation}
    A = \begin{pmatrix}
      A_{KK} & 0 \\
      A_{VK} & A_{VV} \\
    \end{pmatrix},
  \end{equation}
  where $A_{KK}$ and $A_{VV}$ have inverses in $L(X^\w,X^\w)$.  It follows that $A$
  has the bounded inverse on $X^\w$ given by the block matrix
  \begin{equation}
    A^{-1}
    = \begin{pmatrix}
      A_{KK}^{-1} & 0
      \\
      A_{VV}^{-1} A_{VK} A_{KK}^{-1} & A_{VV}^{-1}
    \end{pmatrix}.
  \end{equation}
  By \refeq{W0-bound}--\refeq{W1-bound} (with $\g_0$ sufficiently small),
  $\|A^{-1}B\|_{L(X^\w,X^\w)} < 1$, and the proof is complete.
\end{proof}

\begin{proof}[Proof of Lemma~\ref{lem:DPhiyr}]
(i)
By the assumption that $y \in X^{\w}$, Lemma~\ref{lem:Lyf-bounds},
and \refeq{W1-bound}, the equation \refeq{Wrewrite}
with the boundary conditions of Lemma~\ref{lem:DPhiyr}(i) is equivalent to
\begin{equation}
\lbeq{ySb}
  y = \Sb W(t,x)y+ \Sb r
  .
\end{equation}
It follows that the solution operator is given by
  \begin{equation} \lbeq{SNeumann}
    S(t,x) = (1-\Sb W(t,x))^{-1}\Sb,
  \end{equation}
with the existence of the inverse operator guaranteed by Lemma~\ref{lem:WS0}.

\smallskip \noindent (ii)
This follows from \refeq{SNeumann}
and Lemmas~\ref{lem:Lyf-bounds} and \ref{lem:WS02}.

\smallskip \noindent (iii)
By \refeq{SNeumann},
continuous Fr\'echet differentiability in $x$ of
$S(t,x)$ follows from the continuous Fr\'echet differentiability of $\Sb W(t,x)$,
which itself follows from part~(i) and from
$D_x\Sb W(t,x) = \Sb D_xW(t,x)$
by linearity of $\Sb$.
Explicitly,
\begin{equation}
  D_x S(t,x)
  =
  (1-\Sb W(t,x))^{-1}D_x\Sb W(t,x)(1-\Sb W(t,x))^{-1}\Sb
  .
\end{equation}
By \refeq{W2-bound},
\begin{equation}
\lbeq{DSW}
  \|D_x\Sb W(t,x)\|_{L(X^\w,L(X^\w, X^\w))}
  \leq C \|D_xW(t,x)]\|_{L(X^\w,L(X^\w, X^\v))}
  \leq C.
\end{equation}
Together with the boundedness of the operators $(1-\Sb W(t,x))^{-1}$ and $\Sb$,
this proves \eqref{e:DStx-bound} and completes the proof.
\end{proof}

%%%%%%%%%%%%%%%%%%%%%%%%%%%%%%%%%%%%%%%%%%%%%%%%%%%%%%%%%%%%%%%%%%%%%%
%%%%%%%%%%%%%%%%%%%%%%%%%%%%%%%%%%%%%%%%%%%%%%%%%%%%%%%%%%%%%%%%%%%%%%

\section*{Acknowledgement}
This work was supported in part by NSERC of Canada.
The authors thank the referee for helpful comments.
RB gratefully acknowledges the hospitality
of the Department of Mathematics and Statistics at McGill University,
where part of this work was done.
DB gratefully acknowledges the support and hospitality of
the Institute for Advanced Study at Princeton and of Eurandom during part
of this work.
GS gratefully acknowledges the support and hospitality of
the Institut Henri Poincar\'e, where part of this work was done.

\bibliographystyle{plain}

\end{document}